\renewcommand{\b}{F}
\newcommand{\C}{k}
\renewcommand{\hom}{\mathrm{Hom}}
\newcommand{\K}{\mathcal{K}}
\newcommand{\ind}{\mathrm{ind}}
\newcommand{\locart}{\mathrm{Mod}^{la}_A}
\newcommand{\promod}{\mathrm{Mod}^{pro}_A}
\newcommand{\res}{k_F}
\newcommand{\tor}{\mathrm{Tor}}
\numberwithin{equation}{section}
\email{Benjamin.Schraen@math.uvsq.fr}
\author{Benjamin Schraen}
\address{Laboratoire de Math\'ematiques de Versailles\\
UMR CNRS 8100\\
45 avenue des \'Etats Unis - B\^atiment Fermat\\
F--78035 Versailles Cedex, France}
\title[Représentations supersingulières]{Sur la pr\'esentation des repr\'esentations supersinguli\`eres de $\mathrm{GL}_2(\b)$}
\begin{abstract}
Soit $\b$ une extension quadratique de $\mathbb{Q}_p$. Nous prouvons que les représentations lisses irréductibles supersingulières à caractère central de $\mathrm{GL}_2(\b)$ ne sont pas de présentation finie.
\end{abstract}
\subjclass{22E50,11F33}
\begin{document}

\maketitle

Le programme de Langlands $p$-adique permet de comprendre les liens existant entre représentations $p$-adiques continues du groupe $\mathrm{GL}_2(\mathbb{Q}_p)$ et les représentations $p$-adiques continues du groupe $\mathrm{Gal}(\overline{\mathbb{Q}_p}/\mathbb{Q}_p)$. Une première étape de ce programme, réalisée par C.~Breuil dans \cite{Brmodp}, a été de construire une correspondance $p$-modulaire semi-simple. Par la suite, P.~Colmez a montré dans \cite{Colmezfoncteur} que cette correspondance provient de la construction d'un foncteur de la catégorie des représentations lisses de $\mathrm{GL}_2(\mathbb{Q}_p)$ de longueur finie sur $\overline{\mathbb{F}_p}$ vers la catégorie des représentations continues de $\mathrm{Gal}(\overline{\mathbb{Q}_p}/\mathbb{Q}_p)$ sur $\overline{\mathbb{F}_p}$. Cette construction s'appuie de façon cruciale sur le fait que les représentations de $\mathrm{GL}_2(\mathbb{Q}_p)$ que l'on considère sont de présentation finie, et même mieux, peuvent être réalisées par les groupes d'homologie de systèmes de coefficients $\mathrm{GL}_2(\mathbb{Q}_p)$-équivariants sur l'arbre de Bruhat-Tits de $\mathrm{PGL}_{2,\mathbb{Q}_p}$. Ce résultat est une conséquence des travaux de L.~Barthel et R.~Livné (\cite{BLmodular}), C.~Breuil (\cite{Brmodp}), V.~Paskunas (\cite{Paskcoeff}), R.~Ollivier (\cite{OllivierIinv}), P.~Colmez (\cite{Colmezfoncteur}), Y.~Hu (\cite{HuDiag}) et M.~Emerton (\cite{EmCoh}).

Si $\b$ est une extension finie de $\mathbb{Q}_p$, M.-F.~Vignéras a construit dans \cite{Vigfoncteur} (voir aussi \cite{VigSch}) un foncteur associant à une représentation $p$-modulaire lisse admissible et de présentation finie, une représentation continue de dimension finie sur  $\overline{\mathbb{F}_p}$ du groupe $\mathrm{Gal}(\overline{\mathbb{Q}_p}/\mathbb{Q}_p)$. Cependant, nous ne connaissons pas pour l'instant de classification complète des représentations lisses irréductibles de $\mathrm{GL}_2(\b)$. C'est une conséquence des travaux de Barthel et Livné (\cite{BLmodular}) que les induites paraboliques sont de présentation finie. Le problème des représentations supersingulières, c'est-à-dire des représentations irréductibles qui ne sont pas des sous-quotients d'induites paraboliques, reste ouvert, bien que l'on sache qu'elles existent en grand nombre (\cite{Paskcoeff}, \cite{BrPask}).

Le but de ce travail est de répondre à la question de la présentation des représentations supersingulières de $\mathrm{GL}_2(\b)$ lorsque $\b$ est une extension quadratique de $\mathbb{Q}_p$. Notre résultat est le suivant.

\begin{theo}[\ref{theo:nonpf}]\label{theo:principal}
Supposons que $[\b:\mathbb{Q}_p]=2$. Une représentation lisse irréductible supersingulière de $\mathrm{GL}_2(\b)$ sur $\overline{\mathbb{F}_p}$ ayant un caractère central n'est pas de présentation finie.
\end{theo}

La preuve de ce résultat suit une stratégie inaugurée par Emerton dans \cite{EmCoh}. Considérons l'algèbre d'Iwasawa $\overline{\mathbb{F}_p}[[U]]$ où $U\subset\mathrm{GL}_2(\mathcal{O}_\b)$ est le sous-groupe des matrices triangulaires supérieures à coefficients entiers. La matrice $\alpha=\left(\begin{smallmatrix}\varpi&0\\0&1\end{smallmatrix}\right)$ induit par conjugaison un endomorphisme plat $\phi$ de l'algèbre $\overline{\mathbb{F}_p}[[U]]$ et donne ainsi une structure $\phi$-module à coefficients dans $\overline{\mathbb{F}_p}[[U]]$ à toute représentation lisse de $\mathrm{GL}_2(\b)$. Emerton a remarqué que l'étude de certains sous-$\phi$-modules d'une telle représentation est facilitée par le fait que l'anneau $\overline{\mathbb{F}_p}[[U]][X]_{\phi}$ des polynômes tordus par $\phi$ est cohérent à gauche.

Dans l'article \cite{EmCoh}, Emerton étudie les modules sur l'anneau $A[X]_\phi$, où $A$ est un anneau commutatif noethérien et $\phi$ un endomorphisme plat de $\phi$. Il montre que $A[X]_\phi$ est cohérent à gauche et étudie plus particulièrement le cas où $A$ est un anneau de valuation discrète. Nous nous intéressons ici au cas plus général où $A$ est un anneau local noethérien complet et régulier, ce qui est le cas de $\overline{\mathbb{F}_p}[[U]]$. Contrairement au cas d'un anneau de valuation discrète, il n'est plus vrai en général que tout $\overline{\mathbb{F}_p}[[U]][X]_{\phi}$-module $M$ de type fini pour lequel $\overline{\mathbb{F}_p}\otimes_{\overline{\mathbb{F}_p}[[U]]}M$ est de dimension finie sur $\overline{\mathbb{F}_p}$ soit de corang fini sur $\overline{\mathbb{F}_p}[[U]]$.

Pour prouver le théorème \ref{theo:principal}, nous utilisons un théorème de Hu (\cite{HuDiag}) affirmant que si une représentation $\pi$ vérifie les hypothèses du théorème précédent et est de présentation finie, alors nécessairement un sous-espace $I^+(\pi,\sigma)\subset\pi$ doit être de corang fini sur l'algèbre $\overline{\mathbb{F}_p}[[U]]$. Notre preuve est basée sur l'étude de $I^+(\pi,\sigma)$ en tant que $\overline{\mathbb{F}_p}[[U]][X]_{\phi}$-module. En utilisant les résultats de \cite{HMS}, nous montrons en particulier que si $I^+(\pi,\sigma)$ est un $\overline{\mathbb{F}_p}[[U]]$-module de corang fini, il est de présentation finie en tant que $\overline{\mathbb{F}_p}[[U]][X]_{\phi}$-module. Puis nous prouvons que si $I^+(\pi,\sigma)$ est de présentation finie, alors la représentation $\ind_{Z\mathrm{GL}_2(\mathcal{O}_\b)}^{\mathrm{GL}_2(\b)}(\sigma)/T$ est admissible, ce qui est faux dès que $\b\neq\mathbb{Q}_p$.\\

La première partie de cet article est consacrée à l'étude des $A[X]_\phi$-modules. Nous commençons par rappeler quelques résultats de l'article \cite{EmCoh}. Nous nous spécialisons très vite au cas où $A$ est un anneau noethérien local complet et régulier. Nous faisons quelques rappels sur la dualité de Pontryagin des $A$-modules localement artiniens et introduisons la caractéristique d'Euler-Poincaré d'un $A[X]_\phi$-module de présentation finie. Ensuite nous montrons le résultat clé sur les $A[X]_\phi$-modules qui sont de corang fini en tant que $A$-module.\\
Dans une deuxième partie nous appliquons les généralités sur les $A[X]_\phi$-modules aux représentations supersingulières de $\mathrm{GL}_2(\b)$. Nous commençons par rappeler un certain nombre de résultats sur les représentations de présentation finie, ainsi que la définition de $I^+(\pi,\sigma)$, puis nous détaillons certains calculs propres aux représentations supersingulières, en particulier la non admissibilité de $\ind_{Z\mathrm{GL}_2(\mathcal{O}_\b)}^{\mathrm{GL}_2(\b)}(\sigma)/T$. La dernière section de cette deuxième partie conclut la preuve du théorème \ref{theo:principal}.\\

\section{$\phi$-modules sur $\C[[U]]$}

\subsection{Généralités}

Soit $A$ un anneau commutatif noethérien et $\phi$ un endomorphisme plat de $A$. On note $A[X]_\phi$ l'anneau des polynômes tordus. Ses éléments sont les polynômes $\sum_{i=0}^n a_i X^i$ et il est muni de la loi de multiplication
\begin{equation*}
(\sum_{i=0}^n a_iX^i)(\sum_{j=0}^m b_jX^j)=\sum_{k=0}^{n+m}(\sum_{i+j=k}a_i\phi^i(b_j))X^k.
\end{equation*}
Nous abrégerons toujours $A[X]_\phi$-module à gauche par $A[X]_\phi$-module. Si $M$ est un $A[X]_\phi$-module, on note $\phi_M$ l'endomorphisme du groupe abélien $M$ défini par l'action de l'élément $X$.

\begin{defi}
Notons $\mathcal{A}_\phi$ le $A$-bimodule suivant. Son groupe abélien sous-jacent est $A$. Sa structure de $A$-module à gauche est induite par la multiplication et sa structure à droite est celle pour laquelle $a \in A$ agit par multiplication par $\phi(a)$.

Si $M$ est un $A$-module, on note $\phi^*M$ le $A$-module $\mathcal{A}_\phi\otimes_A M$. Pour $n \geq 0$, on note $(\phi^*)^n$ la composée $n$ fois du foncteur $\phi^*$, $(\phi^*)^0$ désignant le foncteur identité.
\end{defi}
Comme $\phi$ est un endomorphisme plat de $A$, tous les foncteurs $(\phi^*)^n$ sont exacts.

\begin{lemm}\label{lemm:exact}
Le foncteur $A[X]_\phi\otimes_A -$ de la catégorie des $A$-modules vers la catégorie des $A[X]_\phi$-modules est exact.
\end{lemm}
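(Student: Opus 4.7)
Le plan est de montrer que $A[X]_\phi$ est plat comme $A$-module à droite, ce qui entraîne aussitôt l'exactitude du foncteur $A[X]_\phi\otimes_A -$ (la vérification de l'exactitude d'une suite de $A[X]_\phi$-modules s'effectuant au niveau des groupes abéliens sous-jacents).

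Je commencerais par expliciter la structure de $A$-module à droite sur $A[X]_\phi$. La formule de multiplication rappelée dans le texte fournit l'identité $X^i\cdot a=\phi^i(a)X^i$ pour tout $a\in A$ et tout entier $i\geq 0$. Le sous-groupe $AX^i\subset A[X]_\phi$ est donc stable sous l'action à droite de $A$, et il s'identifie, en tant que $(A,A)$-bimodule, au bimodule $\mathcal{A}_{\phi^i}$ (structure à gauche par multiplication, structure à droite tordue par $\phi^i$). On obtient de ce fait une décomposition
\[
A[X]_\phi\cong\bigoplus_{i\geq 0}\mathcal{A}_{\phi^i}
\]
en somme directe de $A$-modules à droite. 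Pour tout $A$-module à gauche $M$, on en déduit une identification naturelle
\[
A[X]_\phi\otimes_A M\cong\bigoplus_{i\geq 0}\mathcal{A}_{\phi^i}\otimes_A M=\bigoplus_{i\geq 0}(\phi^*)^iM,
\]
la dernière égalité résultant par itération immédiate de la définition de $\phi^*$.

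Il ne reste plus qu'à invoquer la remarque faite juste avant l'énoncé, selon laquelle chaque foncteur $(\phi^*)^i$ est exact (puisque $\phi^i$ est plat comme composé de morphismes plats), et à observer qu'une somme directe de foncteurs exacts l'est aussi. La preuve ne présente donc aucune difficulté sérieuse : le seul point qui mérite attention est l'identification correcte des structures de bimodule intervenant dans la décomposition ci-dessus, mais elle se vérifie par un calcul direct à partir de la loi de multiplication de $A[X]_\phi$.
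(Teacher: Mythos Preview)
Your proof is correct and follows essentially the same approach as the paper's: both establish the natural isomorphism $A[X]_\phi\otimes_A M\cong\bigoplus_{i\geq 0}(\phi^*)^iM$ and then invoke the exactness of each $(\phi^*)^i$ (from the flatness of $\phi$) together with the exactness of direct sums. The only cosmetic difference is that you phrase it as a decomposition of the right $A$-module $A[X]_\phi$ itself, whereas the paper writes down the isomorphism directly on $A[X]_\phi\otimes_A M$ via the explicit formula $\sum_i a_iX^i\otimes b\mapsto (a_i\otimes 1\otimes\cdots\otimes 1\otimes b)_{i\geq 0}$.
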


\begin{proof}
Si $M$ est un $A$-module, on a un isomorphisme $A[X]_\phi \otimes_AM\xrightarrow{\sim}\bigoplus_{n \geq 0} (\phi^*)^n(M)$ défini par $\sum_{i=0}^na_iX^i\otimes b\mapsto (a_i\otimes1\otimes\cdots\otimes1\otimes b)_{i\geq0}$. Comme $\phi$ est un endomorphisme plat de $A$, chaque foncteur $\phi^{*n}$ est exact, de même que le foncteur somme directe.
\end{proof}

D'après \cite[Prop. $1.3$]{EmCoh}, l'anneau $A[X]_\phi$ est cohérent à gauche, autrement dit, un sous-$A[X]_\phi$-module de type fini d'un $A[X]_\phi$-module de présentation finie est de présentation finie. Si $M$ est un $A$-module de type fini, le $A[X]_\phi$-module $A[X]_\phi\otimes_A M$ est un exemple de $A[X]_\phi$-module de présentation finie.

\subsection{Produits de torsion}

Soit $f : \, A \rightarrow B$ un morphisme d'anneaux noethériens. Supposons que $B$ soit muni d'un endomorphisme plat $\phi'$ et que $\phi' \circ f=f \circ \phi$. D'après \cite[Lemma $2.1$]{EmCoh}, il existe des isomorphismes de $\delta$-foncteurs $\mathrm{Tor}_i^A(B, -) \simeq \mathrm{Tor}_i^{A[X]_\phi}(B[X]_{\phi'}, -)$, munissant chaque groupe $\mathrm{Tor}_i^A(B,M)$ d'une structure de $B[X]_{\phi'}$-module. De plus, si $M$ est un $A[X]_\phi$-module de présentation finie, chaque $\tor_i^A(B,M)$ est un $B[X]_{\phi'}$-module de présentation finie (\cite[Prop. $2.2$]{EmCoh}).

\begin{prop}\label{prop:tenseur}
Il existe des isomorphismes entre $\delta$-foncteurs de la catégorie des $A$-modules vers la catégorie des $B[X]_{\phi'}$-modules $\tor_i^A(B,A[X]_\phi\otimes_A-) \simeq B[X]_{\phi'}\otimes_B\tor_i^A(B,-)$ pour tout $i \geq 0$.
\end{prop}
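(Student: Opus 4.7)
L'idée du plan est de ramener la vérification au cas $i=0$, puis de calculer les deux membres au moyen d'une même résolution plate.

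Pour $i=0$, le point clé est un isomorphisme de $(B,A)$-bimodules $B \otimes_A \mathcal{A}_\phi \xrightarrow{\sim} \mathcal{B}_{\phi'}$ donné par $b\otimes a \mapsto bf(a)$ : la relation $\phi' \circ f = f \circ \phi$ assure que l'action à droite de $A$ issue de $\mathcal{A}_\phi$ correspond à celle de $\mathcal{B}_{\phi'}$ via $f$. En itérant on en déduit $B\otimes_A(\phi^*)^n M \simeq (\phi'^*)^n(B\otimes_A M)$ pour tout $n$, puis en sommant et en utilisant la décomposition du lemme \ref{lemm:exact}, un isomorphisme naturel $B \otimes_A (A[X]_\phi \otimes_A M) \simeq B[X]_{\phi'} \otimes_B (B\otimes_A M)$ de $B[X]_{\phi'}$-modules.

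Pour $i\geq 1$, je choisirais une résolution $P_\bullet\to M$ de $M$ par des $A$-modules plats. La platitude de $\phi$ entraîne que $(\phi^*)^n$ préserve la platitude des $A$-modules, donc chaque $A[X]_\phi\otimes_A P_n \simeq \bigoplus_{k}(\phi^*)^kP_n$ est plat comme $A$-module. Combiné au lemme \ref{lemm:exact}, cela montre que $A[X]_\phi\otimes_A P_\bullet$ est une résolution plate de $A[X]_\phi\otimes_A M$ dans la catégorie des $A$-modules. On obtient alors
\begin{equation*}
\tor_i^A(B,A[X]_\phi\otimes_A M) \simeq H_i(B\otimes_A A[X]_\phi\otimes_A P_\bullet) \simeq H_i(B[X]_{\phi'}\otimes_B(B\otimes_A P_\bullet)),
\end{equation*}
la seconde isomorphie résultant de l'étape précédente appliquée en chaque degré. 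Puisque $B[X]_{\phi'}\otimes_B -$ est exact (lemme \ref{lemm:exact} appliqué à $B$ et $\phi'$), le dernier terme vaut $B[X]_{\phi'}\otimes_B\tor_i^A(B,M)$.

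La compatibilité aux morphismes de bord, nécessaire pour l'isomorphisme de $\delta$-foncteurs, est ensuite formelle : les deux familles d'isomorphismes se construisent via une même résolution fonctorielle en $M$ suivie de foncteurs exacts. Le seul point technique à soigner me semble être la vérification que $(\phi^*)^n$ préserve la platitude des $A$-modules, sans quoi la résolution $A[X]_\phi\otimes_A P_\bullet$ ne serait pas exploitable pour calculer ces Tor.
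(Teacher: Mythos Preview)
Your proof is correct and follows essentially the same route as the paper: both arguments identify the two sides as the left derived functors of the composite $G = B\otimes_A(A[X]_\phi\otimes_A-)$, using that $A[X]_\phi\otimes_A-$ and $B[X]_{\phi'}\otimes_B-$ are exact and that the former sends free (or flat) $A$-modules to flat $A$-modules. The paper phrases this abstractly as $L_iG \simeq \tor_i^A(B,A[X]_\phi\otimes_A-)$ and $L_iG \simeq B[X]_{\phi'}\otimes_B\tor_i^A(B,-)$ via associativity of the tensor product, whereas you unpack the same computation with an explicit flat resolution and a termwise bimodule identification $B\otimes_A\mathcal{A}_\phi \simeq \mathcal{B}_{\phi'}$.
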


\begin{proof}
Soit $G$ le foncteur $B\otimes_A(A[X]_\phi\otimes_A-)$. Par associativité du produit tensoriel, il existe des isomorphismes canoniques de foncteurs
\begin{equation*}
G\simeq B[X]_{\phi'}\otimes_A- \ \mathrm{et} \ G\simeq B[X]_{\phi'}\otimes_B (B \otimes_A-). 
\end{equation*}
D'après le lemme \ref{lemm:exact}, le foncteur $A[X]_\phi\otimes_A -$ est exact et transforme un $A$-module libre en un $A$-module plat. On a donc des isomorphismes de $\delta$-foncteurs $L_i G\simeq \tor_i^A(B,A[X]_\phi\otimes_A -)$. De même, le foncteur $B\otimes_A -$ transforme $A$-modules libres en $B$-modules libres, et le foncteur $B[X]_{\phi'}\otimes_B -$ est exact, d'où un isomorphisme de $\delta$-foncteurs $L_i G \simeq B[X]_{\phi'}\otimes_B \tor_i^A(B,-)$.
\end{proof}

\subsection{Cas d'un anneau local régulier complet}\label{sec:regulier}

Nous supposons désormais que l'anneau $A$ est local régulier complet de dimension finie $d$, de corps résiduel $\C$ et d'idéal maximal $\mathfrak{m}$. Supposons de plus l'endomorphisme $\phi$  local induisant l'identité sur $\C$. En particulier, si $M$ est un $A[X]_\phi$-module, les $\C$-espaces vectoriels $\tor_i^A(\C,M)$ sont munis d'une structure de $\C[X]$-espace vectoriel, $\C[X]$ désignant désormais l'anneau commutatif des polynômes en $X$. De plus si $N$ désigne un $\C[X]$-module, nous notons $N_{tors}$ le sous $\C[X]$-module de $\C[X]$-torsion de $N$.
\subsubsection{}Dans la suite, nous nous intéresserons plus particulièrement aux $A$-modules localement artiniens.
\begin{defi}
On dit qu'un $A[X]_\phi$-module est lisse si le $A$-module sous-jacent est localement artinien.
\end{defi}

\begin{exem}\label{exem:calcul}
Soit $M$ un $A$-module localement artinien. Le $A[X]_\phi$-module $A[X]_\phi\otimes_A M$ est lisse et la proposition \ref{prop:tenseur} montre que pour tout $i$, le $\C[X]$-module $\tor_i^A(\C,A[X]_\phi\otimes_A M)$ est un $\C[X]$-module libre de rang $\dim_\C\tor_i^A(\C,M)$. Par exemple si $I$ est un idéal ouvert de $A$ engendré par $d$ éléments, chaque $\tor_i^A(\C,A[X]_\phi\otimes_A(A/I))$ est un $\C[X]$-module libre de rang $\binom{d}{i}$.
\end{exem}

Si $f : \, M \rightarrow N$ est un morphisme de $A$-modules, on note dans la suite $\tor_i^A(f)$ l'application $\C$-linéaire induite par fonctorialité $\tor_i^A(\C,M) \rightarrow \tor_i^A(\C,N)$.

\begin{exem}
Soit $M$ un $A$-module. L'isomorphisme $A[X]_\phi\otimes_AM\simeq\bigoplus_{n\geq0}(\phi^*)^nM$ permet d'identifier chaque $(\phi^*)^nM$ à un facteur direct du $A$-module $A[X]_\phi\otimes_AM$, et donc chaque $\tor_i^A(\C,(\phi^*)^nM)$ à un sous-$\C$-espace vectoriel de $\tor_i^A(\C,A[X]_\phi\otimes_AM)$. La proposition \ref{prop:tenseur} montre alors que la multiplication par $X^n$ sur $\tor_i^A(\C,A[X]_\phi\otimes_AM)$ induit un isomorphisme de $\C$-espaces vectoriels $\tor_i^A(\C,M)\simeq\tor_i^A(\C,(\phi^*)^nM)$. On obtient ainsi un isomorphisme de foncteurs en $\C$-espaces vectoriels $\tor_i^A(\C,-)\simeq\tor_i^A(\C,(\phi^*)^n-)$. Plus précisément si $f:\,M\rightarrow N$ est un morphisme de $A$-modules, on a un diagramme commutatif
\begin{equation}\label{eq:diagTor}
\xymatrix{\tor_i^A(\C,M)\ar[d]^{\tor_i^A(f)}\ar[r]^{X^n}_{\sim}&\tor_i^A(\C,(\phi^*)^nM)\ar[d]^{\tor_i^A((\phi^*)^nf)}\\ \tor_i^A(\C,N)\ar[r]^{X^n}_{\sim}&\tor_i^A(\C,(\phi^*)^nN).}
\end{equation}
\end{exem}

\subsubsection{}Comme $A$ est un anneau local régulier de dimension finie $d$, on a $\tor_i^A(-,-)=0$ pour $i > d$. En particulier le foncteur $\tor_d^A(\C,-)$ est exact à gauche et est, non canoniquement, isomorphe au foncteur $M \mapsto M[\mathfrak{m}]$ des éléments annulés par $\mathfrak{m}$. Notons de plus que si $M$ est un $A$-module localement artinien, alors $\tor_d^A(\C,M)\neq 0$ si et seulement si $M \neq 0$.

\begin{lemm}\label{lemm:inj}
Soient $h : \, M \rightarrow N$ un homomorphisme de $A$-modules localement artiniens. Si l'application $\tor_d(h)$ est injective, alors $h$ est injective.
\end{lemm}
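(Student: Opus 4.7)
The plan is to exploit two facts already recorded in the excerpt: first, the functor $\tor_d^A(\C,-)$ is left exact, since $A$ is regular of dimension $d$ and therefore $\tor_{d+1}^A(-,-)=0$; second, for a locally artinian $A$-module $K$, one has $K=0$ if and only if $\tor_d^A(\C,K)=0$. Given these two inputs, I reduce the injectivity of $h$ to the vanishing of a single $\tor_d$.

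More precisely, I set $K=\ker h$. Any finitely generated submodule of $K$ is a finitely generated submodule of the locally artinian module $M$, hence is artinian; thus $K$ is itself locally artinian, and the criterion above applies to it. The short exact sequence $0\to K\to M\to M/K\to 0$ together with the vanishing of higher tors yields the four-term exact sequence
$$0\longrightarrow\tor_d^A(\C,K)\longrightarrow\tor_d^A(\C,M)\longrightarrow\tor_d^A(\C,M/K).$$
On the other hand, $M/K\hookrightarrow N$ gives, by left exactness once more, an injection $\tor_d^A(\C,M/K)\hookrightarrow\tor_d^A(\C,N)$. Since $h$ factors as $M\twoheadrightarrow M/K\hookrightarrow N$, functoriality of $\tor_d^A(\C,-)$ factors $\tor_d^A(h)$ through $\tor_d^A(\C,M/K)$; the kernel of $\tor_d^A(h)$ therefore coincides with the kernel of $\tor_d^A(\C,M)\to\tor_d^A(\C,M/K)$, which is $\tor_d^A(\C,K)$.

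The hypothesis that $\tor_d^A(h)$ is injective thus forces $\tor_d^A(\C,K)=0$, and since $K$ is locally artinian the criterion recalled above gives $K=0$, i.e.\ $h$ is injective. I do not anticipate any serious obstacle; the only point deserving a moment's attention is to verify that $K$ is locally artinian, so that the nonvanishing criterion for $\tor_d^A(\C,-)$ is legitimately available.
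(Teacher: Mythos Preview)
Your proof is correct and follows essentially the same approach as the paper's: both identify $\tor_d^A(\C,\ker h)$ with the kernel of $\tor_d^A(h)$ via the left exactness of $\tor_d^A(\C,-)$, then invoke the criterion that a locally artinian module with vanishing $\tor_d^A$ is zero. Your version is simply more explicit in spelling out the factorization through $M/K$ and in noting that $K$ inherits the locally artinian property from $M$.
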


\begin{proof}
Soit $L$ le noyau de $h$. Le foncteur $\tor_d^A(\C, -)$ est exact à gauche, on a donc $\tor_d^A(\C, L)=0$. Or $L$ est localement artinien, donc $L=0$ si et seulement si $\tor_d^A(\C,L)=0$.
\end{proof}

\subsubsection{}Soit $I$ un module duaisant pour $A$ au sens de \cite[Déf. IV.$4.1$]{SGA2}. Un tel module existe toujours d'après \cite[Thm. IV.$4.7$]{SGA2}. Si $M$ est un $A$-module de longueur finie, on note $M^{\vee}=\hom_A(M,I)$. C'est un $A$-module de longueur finie et on a un isomorphisme de foncteurs de la catégorie des $A$-modules de longueur finie vers elle-même $\mathrm{Id}\simeq ((-)^{\vee})^{\vee}$. Soit $\promod$ la catégorie des $A$-modules pseudocompacts (\cite[VII.B.$0.2$]{SGA31}) et $\locart$ celle des $A$-modules localement artiniens. Si $M$ est un $A$-module localement artinien, $M^{\vee}=\hom_A(M,I)$, muni de la topologie de la convergence simple est un $A$-module pseudocompact. Réciproquement si $M$ est un $A$-module pseudocompact, le $A$-module $M^{\vee}=\hom_A^{cont}(M,I)$, $I$ étant muni de la topologie discrète, est un $A$-module localement artinien. On obtient ainsi deux foncteurs contravariants $(-)^{\vee}:\,\locart\rightarrow\promod$ et $(-)^{\vee}:\,\promod\rightarrow\locart$ et des isomorphismes de foncteurs $\mathrm{Id}_{\locart}\simeq ((-)^{\vee})^{\vee}$ et $\mathrm{Id}_{\promod}\simeq ((-)^{\vee})^{\vee}$.

\'Equivalente à $(\locart)^{op}$, la catégorie $\promod$ est abélienne et possède suffisamment de projectifs. Comme $A$ est noethérien, l'idéal $\mathfrak{m}$ et de type fini, et donc le foncteur $k\otimes_A(-)$ va de la catégorie des $A$-modules pseudocompacts vers la catégorie des $\C$-modules pseudocompacts. \'Etant exact à droite, on peut donc définir ses foncteurs dérivés à gauche. Comme $\C$ a une résolution par des $A$-modules libres de type finis, en particulier projectifs dans la catégorie $\promod$, ces foncteurs coïncident avec les foncteurs $\tor_i^A(k,-)$. Autrement dit, si $M$ est un $A$-module pseudocompact, $\tor_i^A(k,M)$ est naturellement muni d'une structure de $\C$-module pseudocompact.

\begin{prop}\label{prop:dual}
Pour tout $0 \leq i \leq d$, il existe des isomorphismes de foncteurs de la catégorie des $A$-modules lisses vers la catégorie des $\C$-modules profinis
\begin{equation*}
\tor_i^A(\C,-)^{\vee} \simeq \tor_{d-i}^A(\C,(-)^{\vee}).
\end{equation*}
\end{prop}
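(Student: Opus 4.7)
L'idée est de calculer les deux membres au moyen d'une résolution libre explicite de $\C$ par des $A$-modules libres de rang fini, à savoir le complexe de Koszul, et d'exploiter son auto-dualité. Choisissons une suite régulière de paramètres $x_1,\ldots,x_d$ engendrant $\mathfrak{m}$ et soit $K_\bullet=K_\bullet(x_1,\ldots,x_d;A)$ le complexe de Koszul associé. C'est une résolution libre de $\C$ par des $A$-modules libres de rang fini, avec $K_j\simeq A^{\binom{d}{j}}$. Les $K_j$ étant projectifs à la fois dans la catégorie des $A$-modules et dans $\promod$, ils calculent les foncteurs $\tor_i^A(\C,-)$ dans les deux contextes.

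\textbf{Réalisation.} Soit $M$ un $A$-module localement artinien. On a $\tor_i^A(\C,M)=H_i(K_\bullet\otimes_A M)$, et chaque $K_j\otimes_A M\simeq M^{\binom{d}{j}}$ est localement artinien. Puisque $(-)^{\vee}:\locart\to\promod$ est une anti-équivalence exacte, elle échange homologie et cohomologie; par ailleurs, l'isomorphisme canonique $(K_j\otimes_A M)^{\vee}\simeq\hom_A(K_j,M^{\vee})$, valable pour $K_j$ libre de rang fini, fournit une identification de complexes. D'où
\begin{equation*}
\tor_i^A(\C,M)^{\vee}\simeq H^i\bigl(\hom_A(K_\bullet,M^{\vee})\bigr)=\ext_A^i(\C,M^{\vee}).
\end{equation*}
Le complexe de Koszul est auto-dual: l'identification $\Lambda^j(A^d)^{\vee}\simeq\Lambda^{d-j}A^d$, par contraction contre la forme volume $e_1\wedge\cdots\wedge e_d$, induit un isomorphisme de complexes $\hom_A(K_\bullet,A)\simeq K_{d-\bullet}$ (à signes près sur les différentielles). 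Comme $K_j$ est libre de rang fini, on a $\hom_A(K_j,M^{\vee})\simeq\hom_A(K_j,A)\otimes_A M^{\vee}\simeq K_{d-j}\otimes_A M^{\vee}$, d'où
\begin{equation*}
\ext_A^i(\C,M^{\vee})\simeq H^i(K_{d-\bullet}\otimes_A M^{\vee})=H_{d-i}(K_\bullet\otimes_A M^{\vee})=\tor_{d-i}^A(\C,M^{\vee}).
\end{equation*}
La naturalité en $M$ résulte de ce que la résolution $K_\bullet$ est indépendante de $M$ et que toutes les identifications intermédiaires le sont.

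\textbf{Obstacle principal.} La difficulté essentielle est d'ordre technique: il s'agit de vérifier soigneusement que la dualité de Pontryagin échange bien homologie et cohomologie au niveau des complexes, que l'isomorphisme $(K_j\otimes_A M)^{\vee}\simeq\hom_A(K_j,M^{\vee})$ est compatible avec les différentielles et confère au terme de droite la bonne structure pseudocompacte (ce qui tient au fait que $K_j$ est de rang fini), et enfin que l'auto-dualité du complexe de Koszul est compatible aux différentielles modulo signes. Le cœur conceptuel de l'énoncé est précisément cette auto-dualité, qui reflète le caractère Gorenstein de l'anneau régulier $A$; le reste est de la vérification de routine.
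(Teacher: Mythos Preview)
Ta preuve est correcte et suit essentiellement la même voie que celle de l'article : complexe de Koszul d'une suite régulière de paramètres, exactitude de la dualité de Pontryagin, identification $(K_j\otimes_A M)^{\vee}\simeq\hom_A(K_j,A)\otimes_A M^{\vee}$ pour $K_j$ libre de rang fini, et auto-dualité du complexe de Koszul. La seule différence cosmétique est que tu nommes explicitement le groupe intermédiaire $\ext_A^i(\C,M^{\vee})$, alors que l'article passe directement de $H^i(K_\bullet(x,-)^{\vee})$ à $H_{d-i}(K_\bullet(x)\otimes_A(-)^{\vee})$ sans le mentionner.
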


\begin{proof}
Rappelons que si $C_\bullet$ désigne un complexe descendant de $A$-modules, on note $C^\bullet$ le complexe ascendant défini par $C^i=C_{-i}$. De plus, si $C_\bullet$ est un complexe descendant de $A$-modules localement artiniens, le complexe $C_\bullet^\vee$ est un complexe ascendant de $A$-modules pseudocompacts. Soit $x=(x_1,\dots,x_d)$ une suite régulière engendrant $\mathfrak{m}$ et $K_{\bullet}(x)$ le complexe de Koszul associé. Ils s'agit d'une résolution libre de type fini de $\C$. Si $M$ est un $A$-module localement artinien, le complexe $K_{\bullet}(x,M)=K_{\bullet}(x)\otimes_AM$ est un complexe de $A$-modules localement artiniens. Comme le foncteur $(-)^{\vee}$ est exact, on a un isomorphisme de foncteurs $H^i(K_\bullet(x,M)^{\vee})\simeq \tor_i^A(\C,-)^{\vee}$. Comme chaque $K_j(x)$ est un $A$-module libre de type fini, on a un isomorphisme de complexes de foncteurs $K_{\bullet}(x,-)^{\vee}\simeq\hom_A(K_{\bullet}(x),A)\otimes_A(-)^{\vee}$. Ainsi on a un isomorphisme de foncteurs $H^{i}(K_{\bullet}(x,-)^{\vee}) \simeq H^{i}(K_{\bullet}(x)\otimes_A (-)^{\vee})$. On utilise alors l'isomorphisme de complexes de Koszul $\hom_A(K_{\bullet}(x),A)\simeq K^{\bullet}(x)[d]$ (dépendant du choix d'un isomorphisme de $A$-modules $\Omega^d_A\simeq A$) pour conclure que $H^{i}(K_{\bullet}(x,-)^{\vee})\simeq H_{d-i}(K_{\bullet}(x)\otimes_A(-)^{\vee})\simeq\tor_{d-i}(\C,(-)^{\vee})$.
\end{proof}

\begin{exem}
Soient $p$ un nombre premier et $U$ un $\mathbb{Z}_p$-module libre de type fini de rang $d$. C'est naturellement un pro-$p$-groupe uniforme. Supposons $\C$ de caractéristique $p$ et notons $A=\C[[U]]$ l'algèbre d'Iwasawa de $U$. Alors $A$ est un anneau local noethérien régulier de dimension $d$ et de corps résiduel $\C$. Dans ce cas, il existe un isomorphisme $\tor_d^A(\C, -) \simeq ( -)^{U}$. Dans cet exemple, un $\C[[U]]$-module localement artinien est une $\C$-représentation lisse de $U$. Si $V$ est une $\C$-représentation lisse de $U$, on note $T(V)$ le $\C$-espace vectoriel $\hom_\C(U,\C)$ que l'on munit de la topologie de la convergence simple. Le groupe $U$ agit sur cet espace par $g\cdot f(\cdot)=f(g^{-1}\cdot)$. Cette action est continue pour la topologie de $T(V)$, et la structure de $\C[U]$-module sur $T(V)$ se prolonge de façon unique en une structure de $\C[[U]]$-module pseudocompact. L'unicité du foncteur dualisant (\cite[Thm. IV.$4.7$]{SGA2}) montre qu'il existe un isomorphisme de foncteur $T(-)\simeq (-)^{\vee}$. Dans ce cas le dual d'un $\C[[U]]$-module localement artinien n'est rien d'autre que la construction usuelle de la représentation duale.
\end{exem}

\begin{defi}
Un $A$-module $M$ est dit admissible, ou de corang fini, s'il est localement artinien et si $\mathrm{Tor}_d^A(\C,M)$ est de dimension finie sur $\C$.
\end{defi}
La proposition \ref{prop:dual} et le lemme de Nakayama (\cite[VII.B.$0.3.4$]{SGA31}) impliquent que $M$ est admissible si et seulement si le $A$-module $M^{\vee}$ est un $A$-module de type fini. Comme $A$ est un anneau noethérien, tout sous-module ou quotient d'un $A[X]_\phi$-module admissible est admissible. On a de plus le résultat suivant.

\begin{coro}\label{coro:adm}
Si $M$ est un $A$-module admissible, alors tous les $\C$-espaces vectoriels $\tor_i^A(\C,M)$ sont de dimension finie.
\end{coro}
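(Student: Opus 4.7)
La stratégie consiste à se ramener par dualité à un énoncé sur les $A$-modules de type fini. Je commencerais par rappeler l'observation qui précède l'énoncé du corollaire: l'admissibilité de $M$ équivaut à la finitude de type du $A$-module pseudocompact $M^{\vee}$. Ce fait résulte immédiatement de la proposition \ref{prop:dual} appliquée pour $i=d$, qui fournit un isomorphisme
\[
\tor_d^A(\C,M)^{\vee}\simeq\tor_0^A(\C,M^{\vee})\simeq M^{\vee}/\mathfrak{m}M^{\vee},
\]
combinée au lemme de Nakayama pour les modules pseudocompacts: $M^{\vee}/\mathfrak{m}M^{\vee}$ est de dimension finie sur $\C$ si et seulement si $M^{\vee}$ est un $A$-module de type fini.

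Il resterait alors à démontrer que pour tout $A$-module de type fini $N$ et tout entier $j\geq 0$, le $\C$-espace vectoriel $\tor_j^A(\C,N)$ est de dimension finie. C'est un fait classique: l'anneau $A$ étant noethérien régulier de dimension finie, le module $N$ admet une résolution libre de longueur finie par des $A$-modules libres de rang fini $P_{\bullet}\rightarrow N$, et $\tor_j^A(\C,N)$ apparaît alors comme un sous-quotient du $\C$-espace vectoriel de dimension finie $\C\otimes_A P_j$.

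Pour conclure, j'appliquerais la proposition \ref{prop:dual} au $A$-module admissible $M$: on dispose d'un isomorphisme
\[
\tor_i^A(\C,M)^{\vee}\simeq\tor_{d-i}^A(\C,M^{\vee}),
\]
dont le membre de droite est de dimension finie sur $\C$ d'après l'étape précédente appliquée à $N=M^{\vee}$. Comme un $\C$-espace vectoriel est de dimension finie si et seulement si son dual au sens de Pontryagin l'est (la réflexivité entre modules localement artiniens et modules pseudocompacts rappelée plus haut se réduisant ici à la biduale usuelle), on en déduit que $\tor_i^A(\C,M)$ est de dimension finie sur $\C$ pour tout $0\leq i\leq d$. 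Aucune étape ne me paraît délicate: tout le contenu du corollaire est déjà encapsulé dans la proposition \ref{prop:dual} et l'équivalence admissibilité/finitude de $M^{\vee}$.
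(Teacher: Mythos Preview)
Your proposal is correct and follows essentially the same route as the paper: use that $M^{\vee}$ is of finite type, observe that the $\tor_j^A(\C,M^{\vee})$ are then finite-dimensional since $A$ is noetherian, and transfer back via Proposition~\ref{prop:dual}. Your write-up is simply more detailed than the paper's three-line version.
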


\begin{proof}
La $A$-module $M^{\vee}$ est de type fini, donc comme $A$ est noethérien, tous les $\tor_i^A(\C,M^{\vee})$ sont des $\C$-modules de type fini. La proposition \ref{prop:dual} montre alors qu'il en est de même des $\tor_i^A(\C,M)$ puisque $\C$ est un corps.
\end{proof}

\subsubsection{}Le lemme suivant nous servira plusieurs fois dans la suite.

\begin{lemm}\label{lemm:suite}
Soit $M$ un $A[X]_\phi$-module lisse de présentation finie. Il existe une suite croissante $(M_i)_{i \geq 0}$ de sous-$A[X]_\phi$-modules de $M$ et une suite $(V_i)_{i\geq 0}$ de $\C$-espaces vectoriels de dimension finie telles que
\begin{itemize}
\item
pour tout $i\geq 0$, il existe un isomorphisme de $A[X]_\phi$-modules $M_{i+1}/M_i \simeq A[X]_\phi\otimes_A V_i$ ;
\item
si $\widetilde{M}=\bigcup_{i \geq 0} M_i$, alors l'application quotient $M\rightarrow M/\widetilde{M}$ induit un isomorphisme $\tor_d^A(\C,M)_{tors}\simeq\tor_d^A(\C,M/ \widetilde{M})$.
\end{itemize}
En particulier, le $A[X]_\phi$-module $M/ \widetilde{M}$ est admissible et chaque $M_i$ est un $A[X]_\phi$-module de présentation finie.
\end{lemm}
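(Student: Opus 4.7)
Le plan est une construction inductive des $M_i$ à partir de $M_0=0$, en calibrant à chaque étape $V_i$ pour épuiser la partie $\C[X]$-libre de $\tor_d^A(\C,M/M_i)$.

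Supposons $M_i$ construit avec $M_i$ de type fini. Par cohérence à gauche de $A[X]_\phi$, $M_i$ est alors de présentation finie, donc $M/M_i$ l'est également. D'après \cite[Prop.~2.2]{EmCoh} appliquée à $B=\C$ et $\phi'=\mathrm{Id}$, le $\C[X]$-module $\tor_d^A(\C,M/M_i)$ est de présentation finie, en particulier de type fini. On fixe un scindement $\tor_d^A(\C,M/M_i)\simeq T_i\oplus F_i$ où $T_i=\tor_d^A(\C,M/M_i)_{tors}$ est de dimension finie sur $\C$ et $F_i$ est $\C[X]$-libre de type fini, puis on relève une $\C[X]$-base de $F_i$ via l'isomorphisme non canonique $\tor_d^A(\C,M/M_i)\simeq(M/M_i)[\mathfrak{m}]$ en un sous-$\C$-espace vectoriel $V_i\subset(M/M_i)[\mathfrak{m}]$ de dimension finie. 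Par l'Exemple~\ref{exem:calcul}, le morphisme $A[X]_\phi\otimes_A V_i\to M/M_i$ induit par l'inclusion $V_i\hookrightarrow M/M_i$ donne sur $\tor_d^A(\C,-)$ l'application $\C[X]\otimes_\C V_i\to\tor_d^A(\C,M/M_i)$ qui est, par construction, un isomorphisme sur $F_i$, donc injective. Le lemme~\ref{lemm:inj} entraîne alors l'injectivité du morphisme originel ; on définit $M_{i+1}\subset M$ comme l'image réciproque dans $M$ de l'image de $A[X]_\phi\otimes_A V_i$ dans $M/M_i$. On a ainsi $M_{i+1}/M_i\simeq A[X]_\phi\otimes_A V_i$ et $M_{i+1}$ reste de type fini.

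L'étape technique centrale, qui constitue la principale difficulté, est le calcul de $\tor_d^A(\C,M/\widetilde{M})$ par passage à la colimite. La suite exacte longue issue de $0\to M_{i+1}/M_i\to M/M_i\to M/M_{i+1}\to 0$, combinée à l'annulation $\tor_{d+1}^A(\C,-)=0$ et au fait que l'image de $\tor_d^A(\C,M_{i+1}/M_i)\simeq\C[X]\otimes_\C V_i$ dans $\tor_d^A(\C,M/M_i)$ est exactement $F_i$, fournit la suite exacte
\begin{equation*}
0\to T_i\to\tor_d^A(\C,M/M_{i+1})\to L_i\to 0
\end{equation*}
où $L_i$ est un sous-$\C[X]$-module de $\tor_{d-1}^A(\C,A[X]_\phi\otimes_A V_i)\simeq\C[X]\otimes_\C\tor_{d-1}^A(\C,V_i)$, donc libre puisque $\C[X]$ est principal. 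Cette suite se scinde et $\tor_d^A(\C,M/M_{i+1})_{tors}\simeq T_i$ ; la flèche de transition envoie $T_i$ isomorphiquement sur $T_{i+1}$ et annule $F_i$. Par récurrence, $T_i\simeq T_0=\tor_d^A(\C,M)_{tors}$ pour tout $i$, et comme $\tor_d^A(\C,-)$ commute aux colimites filtrées, on obtient $\tor_d^A(\C,M/\widetilde{M})\simeq\varinjlim_i\tor_d^A(\C,M/M_i)\simeq T_0$. On en déduit l'isomorphisme cherché, l'admissibilité de $M/\widetilde{M}$ (car $T_0$ est de dimension finie sur $\C$), et la présentation finie de chaque $M_i$ (par cohérence).
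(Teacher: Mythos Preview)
Your proof is correct and follows essentially the same approach as the paper's: inductive construction of $M_i$ by picking $V_i$ to generate the free part of $\tor_d^A(\C,M/M_i)$, injectivity of $A[X]_\phi\otimes_A V_i\to M/M_i$ via Lemme~\ref{lemm:inj}, the long exact sequence showing that the transition map is an isomorphism on torsion and kills the free part (using that the relevant piece of $\tor_{d-1}^A$ is $\C[X]$-free), and passage to the colimit. Your use of coherence to establish at each step that $M/M_i$ is of finite presentation is slightly more explicit than in the paper, but otherwise the arguments coincide.
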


\begin{proof}
Rappelons qu'il existe un isomorphisme de foncteurs $\tor_d^A(\C,-)\simeq (-)[\mathfrak{m}]$ que nous fixons. Nous allons construire les deux suites par récurrence de telle sorte que pour tout $i$, l'application $r_i:\,M/M_i\rightarrow M/M_{i+1}$, obtenue par passage au quotient, induise un isomorphisme $\tor_d^A(r_i):\,\tor_d^A(\C,M/M_i)_{tors} \simeq \tor_d^A(\C,M/M_{i+1})_{tors}$ et que l'image de $\tor_d^A(r_i)$ soit contenue dans la torsion de $\tor_d^A(\C,M/M_{i+1})$. On pose $M_0=0$ et $V_0=0$. Supposons que nous ayions construit $M_i \subset M$. Comme $M/M_i$ est un $A[X]_\phi$-module de présentation finie, le $\C[X]$-module $\tor_d^A(\C,M/M_i)$ est un $\C[X]$-module de type fini que l'on peut donc écrire sous la forme $\tor_d^A(\C,M/M_i)_{tors} \oplus \C[X]^{n_i}$. Soit $V_i \subset \tor_d^A(\C,M/M_i) \subset M/M_i$ un sous-$\C$-espace vectoriel de dimension $n_i$ engendrant le sous-$\C[X]$-module $\C[X]^{n_i}$. Considérons $V_i$ comme un $A$-module via le morphisme résiduel $A \twoheadrightarrow \C$ et posons $\overline{M_{i+1}}=A[X]_\phi\otimes_A V_i$. L'inclusion $V_i \subset M/M_i$ induit un morphisme de $A[X]_\phi$-modules $\overline{M_{i+1}} \rightarrow M/M_i$. Ce morphisme est injectif. En effet, l'exemple \ref{exem:calcul} montre que $\tor_d^A(\C,\overline{M_{i+1}}) \simeq \C[X]\otimes_\C V_i$ et l'image de $V_i \subset \overline{M_{i+1}}$ dans $\tor_d^A(\C,M/M_i)$ engendre alors un $\C[X]$-module libre de rang $\dim_\C V_i$, ainsi l'application $\tor_d^A(\C,\overline{M_{i+1}}) \rightarrow \tor_d^A(\C,M/M_i)$ est injective. D'après le lemme \ref{lemm:inj}, l'application $\overline{M_{i+1}} \rightarrow M/M_i$ est injective. On note alors $M_{i+1} \subset M$ l'image réciproque de $\overline{M}_{i+1}$ par $M \rightarrow M/M_i$. Le sous-$A[X]_\phi$-module $M_{i+1}\subset M$ est clairement un sous-$A[X]_\phi$-module de présentation finie. De plus on a une suite exacte
\begin{equation*}
0 \rightarrow \tor_d^A(\C,M/M_i)_{tors} \rightarrow \tor_d^A(\C,M/M_{i+1}) \rightarrow \tor_{d-1}^A(\C,\overline{M_{i+1}}) \rightarrow \tor_{d-1}^A(\C,M/M_i).
\end{equation*}
D'après l'exemple \ref{exem:calcul}, le $\C[X]$-module $\tor_{d-1}^A(\C,\overline{M_{i+1}})$ est libre de rang $d n_i$, ainsi tous ses sous-modules sont des $\C[X]$-modules libres. On a donc un isomorphisme de $\C[X]$-modules $\tor_d^A(\C, M/M_{i+1}) \simeq \tor_d^A(\C,M/M_i)_{tors} \oplus \C[X]^{n_{i+1}}$. En particulier l'application $r_i : \, M/M_i \rightarrow M/M_{i+1}$ induit un morphisme de $\C[X]$-modules $\tor_d^A(\C,M/M_i) \rightarrow \tor_d^A(\C,M/M_{i+1})$ dont l'image est de torsion et sa restriction à $\tor_d^A(\C,M/M_i)_{tors}$ est un isomorphisme $\tor_d^A(\C,M/M_i)_{tors} \simeq \tor_d^A(\C,M/M_{i+1})_{tors}$. Ceci achève la récurrence.\\
Pour conclure, remarquons que si $\widetilde{M}=\bigcup_{i \geq 0} M_i$, on a $M/ \widetilde{M}=\varinjlim_i (M/M_i)$ et donc $\tor_d^A(\C,M/ \widetilde{M})=\varinjlim_i \tor_d^A(\C,M/M_i)=\tor_d^A(\C,M)_{tors}$ par construction.
\end{proof}

\subsection{Caractéristique d'Euler-Poincaré}

Si $M$ est un $A[X]_\phi$-module de présentation finie, le $\C[X]$-module $\tor_i^A(\C,M)$ est de type fini, on note $h_i(M)=\dim_{\C(X)}(\C(X)\otimes_{\C[X]}\tor_i^A(\C,M))$ le rang de sa partie libre. On pose alors
\begin{equation*}
\chi^{A,\phi}(M)=\sum_{i \geq 0} (-1)^ih_i(M).
\end{equation*}
Comme $\C(X)$ est plat sur $\C[X]$, on voit immédiatement que si $0\rightarrow M'\rightarrow M\rightarrow M''\rightarrow 0$ est une suite exacte courte de $A[X]_\phi$-modules de présentation finie, on a $\chi^{A,\phi}(M)=\chi^{A,\phi}(M')+\chi^{A,\phi}(M'')$.\\
En particulier un $A[X]_\phi$-module de présentation finie est admissible si et seulement si $h_d(M)=0$, et dans ce cas, le corollaire \ref{coro:adm} montre que $\dim_\C\tor_i^A(\C,M)$ est fini pour tout $i$, c'est-à-dire $h_i(M)=0$ pour tout $i$. Dans \cite[Prop. $3.5$]{EmCoh}, Emerton montre que si $d=1$, un $A[X]_\phi$-module lisse $M$ de type fini est admissible si et seulement si $h_0(M)=0$. Le résultat qui va suivre est une généralisation partielle de ce résultat.\\

Nous supposons désormais dans cette section que $\C$ est un corps de caractéristique $p$ et $A$ une $\C$-algèbre noethérienne locale complète lisse sur $\C$ de dimension $d$ et de corps résiduel $\C$. Il existe donc un isomorphisme de $\C$-algèbres $A\simeq\C[[X_1,\dots, X_d]]$. Choisissons un tel isomorphisme, on peut alors définir un $\C$-endomorphisme plat de $A$ par $\phi(X_i)=X_i^q$ où $q$ est une puissance de $p$.

\begin{prop}\label{prop:EP}
Si $M$ est un $A[X]_\phi$-module lisse de présentation finie, on a $\chi^{A,\phi}(M)=0$.
\end{prop}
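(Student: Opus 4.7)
Mon plan est de combiner la filtration fournie par le lemme \ref{lemm:suite} avec un calcul explicite de $\chi^{A,\phi}$ sur les modules induits $A[X]_\phi\otimes_A V$ pour $V$ un $\C$-espace vectoriel de dimension finie. Pour un tel $V$, vu comme $A$-module via l'application résiduelle $A\twoheadrightarrow \C$, la résolution de Koszul standard identifie $\tor_i^A(\C,V)$ à $\wedge^i(\mathfrak{m}/\mathfrak{m}^2)\otimes_\C V$, qui est de dimension $\binom{d}{i}\dim_\C V$ sur $\C$. La proposition \ref{prop:tenseur} montre alors que $\tor_i^A(\C,A[X]_\phi\otimes_A V)\simeq \C[X]\otimes_\C\tor_i^A(\C,V)$ est un $\C[X]$-module libre du même rang, d'où
\begin{equation*}
\chi^{A,\phi}(A[X]_\phi\otimes_A V)=\dim_\C V\cdot\sum_{i=0}^d(-1)^i\binom{d}{i}=(1-1)^d\dim_\C V=0
\end{equation*}
dès que $d\geq 1$.

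En appliquant le lemme \ref{lemm:suite} à $M$, on obtient une filtration croissante $(M_i)_{i\geq 0}$ de $M$ par des sous-$A[X]_\phi$-modules de présentation finie, avec $M_{i+1}/M_i\simeq A[X]_\phi\otimes_A V_i$ pour certains $V_i$ de dimension finie, telle que $M/\widetilde{M}$ est admissible, où $\widetilde{M}=\bigcup_i M_i$. Par additivité de $\chi^{A,\phi}$ dans les suites exactes courtes de $A[X]_\phi$-modules de présentation finie et le calcul précédent, une récurrence sur $i$ donne $\chi^{A,\phi}(M_i)=0$ pour tout $i\geq 0$. La cohérence de $A[X]_\phi$ implique que chaque quotient $M/M_i$ est de présentation finie, et l'additivité donne
\begin{equation*}
\chi^{A,\phi}(M/M_i)=\chi^{A,\phi}(M)-\chi^{A,\phi}(M_i)=\chi^{A,\phi}(M)
\end{equation*}
pour tout $i\geq 0$.

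L'étape restante, où réside la principale difficulté, consiste à montrer que $\chi^{A,\phi}(M/M_i)=0$ pour un certain $i$ (et donc pour tous). La voie la plus directe serait de prouver que la filtration se stabilise, c'est-à-dire que $\widetilde{M}=M_{i_0}$ pour un certain $i_0$ : le quotient $M/M_{i_0}=M/\widetilde{M}$ serait alors admissible et tous ses $h_j$ s'annuleraient par le corollaire \ref{coro:adm}, d'où $\chi^{A,\phi}(M)=\chi^{A,\phi}(M/M_{i_0})=0$. Comme $A[X]_\phi$ n'est que cohérent et non noethérien, une telle stabilisation ne découle pas automatiquement de la présentation finie de $M$ ; on s'attend à devoir utiliser ici la forme spécifique $\phi(X_i)=X_i^q$ du Frobenius---par l'intermédiaire de la liberté de $\mathcal{A}_\phi$ comme $A$-module à droite de rang $q^d$---pour borner les rangs successifs $\dim_\C V_i$ et forcer leur annulation. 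Une approche alternative, exploitant le fait que $\tor^A$ commute aux colimites filtrantes et que $\tor_j^A(\C,M/\widetilde{M})$ est de dimension finie sur $\C$ pour chaque $j$, consisterait à transférer l'annulation des $h_j(M/\widetilde{M})$ aux $h_j(M/M_i)$ pour $i$ grand, puis à conclure directement sans passer par la stabilisation.
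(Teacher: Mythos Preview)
Your argument is correct and matches the paper's proof in the case $d=1$: there, Emerton's result \cite[Prop.~3.2]{EmCoh} says that a finitely generated admissible $A[X]_\phi$-module over a DVR is automatically of finite presentation, so $M/\widetilde{M}$ is finitely presented, $\widetilde{M}$ is finitely presented by coherence, hence finitely generated, and the filtration stabilises. Your computation $\chi^{A,\phi}(A[X]_\phi\otimes_A V)=0$ and the additivity then finish the case $d=1$ exactly as in the paper.

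The gap is for $d\geq 2$. Your ``voie la plus directe''---forcing the filtration to stabilise---is not merely difficult, it is \emph{false} in general, even for the specific Frobenius $\phi(X_i)=X_i^q$. The paper states explicitly at the beginning of \S\ref{sec:regulier}.5 that ``admissible~$+$~de type fini $\Rightarrow$ de présentation finie'' fails as soon as $d>1$, and Proposition~\ref{prop:admnonpf} produces, in dimension~$2$, finitely presented smooth modules $M$ (with $\chi^{A,\phi}(M)=0$ and $h_0(M)=0$) for which the sequence $(M_i)$ of Lemma~\ref{lemm:suite} is \emph{strictly} increasing, so that $\widetilde{M}$ is not finitely generated. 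The modules $L(\sigma)$ of the last section are concrete instances. Your alternative (transferring $h_j(M/\widetilde{M})=0$ to $h_j(M/M_i)$ via colimits) does not work either: the transition maps $\tor_d^A(\C,M/M_i)\to\tor_d^A(\C,M/M_{i+1})$ kill the free part by construction, so the colimit being finite-dimensional says nothing about the individual ranks $h_d(M/M_i)$, which can remain positive for all $i$.

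The paper's proof avoids this obstacle entirely by \emph{induction on $d$}. One sets $B=A/(X_1)$; the hypothesis $\phi(X_1)=X_1^q$ guarantees that $(X_1)$ is $\phi$-stable, so $\phi$ descends to $B$ and $B[X]_\phi$ makes sense. The change-of-rings spectral sequence
\[
\tor_i^B(\C,\tor_j^A(B,M))\ \Longrightarrow\ \tor_{i+j}^A(\C,M)
\]
degenerates (since $B$ has a two-term free $A$-resolution, $\tor_j^A(B,M)=0$ for $j\geq 2$) into a long exact sequence of $\C[X]$-modules, which yields
\[
\chi^{A,\phi}(M)=\chi^{B,\phi}(B\otimes_A M)-\chi^{B,\phi}(\tor_1^A(B,M)).
\]
Both $B\otimes_A M$ and $\tor_1^A(B,M)$ are smooth $B[X]_\phi$-modules of finite presentation (by \cite[Prop.~2.2]{EmCoh}), and $B\simeq\C[[X_2,\dots,X_d]]$ with the same Frobenius shape, so the induction hypothesis gives $\chi^{B,\phi}=0$ on both, hence $\chi^{A,\phi}(M)=0$. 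This is where the specific form of $\phi$ is actually used: not to bound the $\dim_\C V_i$, but to ensure the existence of a $\phi$-stable regular parameter along which one can cut down the dimension.
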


\begin{proof}
Commençons par traiter le cas où $d=1$, autrement dit où $A$ est un anneau de valuation discrète. Soient $(M_i)$ et $(V_i)$ des suites comme dans le lemme \ref{lemm:suite}. Posons $\widetilde{M}=\bigcup_i M_i$. Comme chaque quotient $M_{i+1}/M_i$ est de la forme $A[X]_\phi\otimes_A V_i$, l'exemple \ref{exem:calcul} et l'additivité de la fonction $\chi^{A,\phi}$ montrent que $\chi^{A,\phi}(M_i)=0$ pour tout $i$. De plus $M/\widetilde{M}$ est un $A[X]_\phi$-module de type fini et admissible, donc $\chi^{A,\phi}(M/\widetilde{M})=0$. De plus $M/\widetilde{M}$ est de présentation finie par \cite[Prop. $3.2$]{EmCoh}, donc la cohérence de $A[X]_\phi$ montre que $\widetilde{M}$ est de présentation finie, en particulier de type fini. Il existe donc $i$ tel que $M_i=\widetilde{M}$. Comme on a déjà vu que $\chi^{A,\phi}(M_i)=0$, l'additivité de $\chi^{A,\phi}$ implique que $\chi^{A,\phi}(M)=0$.\\
On procède alors par récurrence sur la dimension de $A$.\\
Posons $B=A/(X_1)$. L'idéal $(X_1)$ étant stable par $\phi$, l'application $A \rightarrow B$ est $\phi$-équivariante. On a une suite spectrale de $\C[X]$-modules
\begin{equation*}
E_{i,j}^2=\tor_i^B(\C, \tor_j^A(B, M)) \Rightarrow \tor_{i+j}^A(\C,M).
\end{equation*}
Une résolution libre du $A$-module $B$ est donnée par $A \xrightarrow{X_1} A$, donc $\tor_j^A(B,M)=0$ pour $j \geq 2$. On en déduit une suite exacte longue de $\C[X]$-modules
\begin{equation*}
\cdots \rightarrow \tor_{i-1}^B(\C, \tor_1^A(B,M)) \rightarrow \tor_i^A(\C,M) \rightarrow \tor_i^B(\C,B\otimes_A M) \rightarrow \cdots
\end{equation*}
D'où une égalité $\chi^{A,\phi}(M)=\chi^{B,\phi}(B \otimes_A M)-\chi^{B,\phi}(\tor_1^A(B,M))$. Or les deux $B[X]_\phi$-modules $B \otimes_A M$ et $\tor_1^A(B,M)$ sont de présentation finie et lisses. Par récurrence sur la dimension de $A$, on a $\chi^{A,\phi}(M)=0$.
\end{proof}

\begin{rema}
L'auteur ignore si ce résultat reste vrai en supposant uniquement $A$ noethérien local complet et régulier, et pour tout endomorphisme plat.
\end{rema}

\subsection{Admissibilité de certains $A[X]_\phi$-modules}

Si $A$ est un anneau de valuation discrète, un $A[X]_\phi$-module admissible de type fini et admissible est de présentation finie (\cite[Prop. $3.2$]{EmCoh}).  En général ce n'est plus vrai si $A$ est un anneau local régulier de dimension plus grande que $1$.

On conserve les hypothèses de la section \ref{sec:regulier}, mais en supposant que $d=2$.

\begin{prop}\label{prop:admnonpf}
Soit $M$ un $A[X]_\phi$-module de présentation finie tel que $\chi^{A,\phi}(M)=0$ et $h_0(M)=0$. Alors soit $M$ est admissible, soit il existe un sous-$A[X]_\phi$-module $\widetilde{M} \subset M$ qui n'est pas de type fini et tel que $\tor_2^A(\C,M/ \widetilde{M})\simeq\tor_2^A(\C,M)_{tors}$. En particulier $M/ \widetilde{M}$ est admissible, de type fini, mais n'est pas de présentation finie.
\end{prop}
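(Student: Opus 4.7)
On appliquera d'abord le lemme \ref{lemm:suite} à $M$ pour obtenir la suite croissante $(M_i)_{i\geq 0}$ de sous-$A[X]_\phi$-modules de présentation finie, les $\C$-espaces vectoriels de dimension finie $V_i$ avec $M_{i+1}/M_i\simeq A[X]_\phi\otimes_A V_i$, et la réunion $\widetilde{M}=\bigcup_i M_i$. Le lemme fournit déjà l'admissibilité de $M/\widetilde{M}$ et l'isomorphisme $\tor_2^A(\C,M/\widetilde{M})\simeq\tor_2^A(\C,M)_{tors}$ ; la dichotomie de l'énoncé correspondra aux deux alternatives : $\widetilde{M}$ est ou n'est pas de type fini.

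Si $\widetilde{M}$ n'est pas de type fini, c'est le sous-module cherché. Le quotient $M/\widetilde{M}$ est de type fini (comme quotient de $M$) et admissible ; s'il était de présentation finie, la cohérence à gauche de $A[X]_\phi$ impliquerait que son noyau $\widetilde{M}$ est de type fini, ce qui contredirait l'hypothèse.

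Dans le cas contraire, $\widetilde{M}=\bigcup_i M_i$ étant une réunion croissante, la finitude de ses générateurs impose $\widetilde{M}=M_N$ pour un certain entier $N$, et il s'agira de démontrer que $M$ est alors admissible. Puisque $M/M_N$ est admissible, le corollaire \ref{coro:adm} donne $h_i(M/M_N)=0$ pour tout $i$ ; en tensorisant la suite exacte longue des $\tor$ associée à $0\to M_N\to M\to M/M_N\to 0$ par le foncteur exact $\C(X)\otimes_{\C[X]}-$, les termes relatifs à $M/M_N$ disparaissent et on obtient $h_i(M_N)=h_i(M)$ pour tout $i$, en particulier $h_0(M_N)=0$. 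Je procéderai alors par récurrence descendante sur $i\in\{1,\dots,N\}$ pour montrer $V_{i-1}=0$ : en supposant $V_j=0$ pour $j\geq i$, on a $M_i=M_N$, donc $h_0(M_i)=0$, et la suite exacte
\begin{equation*}
\tor_0^A(\C,M_{i-1})\to\tor_0^A(\C,M_i)\to\tor_0^A(\C,A[X]_\phi\otimes_A V_{i-1})\to 0
\end{equation*}
combinée à l'exemple \ref{exem:calcul} (qui identifie le dernier terme au $\C[X]$-module libre $\C[X]\otimes_\C V_{i-1}$) et à la platitude de $\C(X)$ sur $\C[X]$ fournira $h_0(M_i)\geq\dim V_{i-1}$, forçant $V_{i-1}=0$. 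Il en résultera que tous les $V_j$ s'annulent, d'où $\widetilde{M}=0$ et $M=M/\widetilde{M}$ admissible.

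Le point délicat sera cette récurrence descendante : c'est le mécanisme qui convertira l'information globale $h_0(M)=0$ en l'annulation successive des strates $V_i$ de la filtration fournie par le lemme \ref{lemm:suite}, via un simple calcul de rangs sur $\C[X]$ après localisation en $\C(X)$.
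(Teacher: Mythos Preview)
Your proof is correct and takes a genuinely different route from the paper's. The paper argues directly: assuming $M$ is not admissible (so $n_0:=\dim_\C V_0=h_2(M)\geq 1$), it shows by \emph{forward} induction that $n_{i+1}\geq n_i$ for all $i$, whence the filtration $(M_i)$ is strictly increasing and $\widetilde{M}$ cannot be finitely generated. The inductive step uses the $\tor_2$--$\tor_1$ portion of the long exact sequence attached to $0\to\overline{M_{i+1}}\to M/M_i\to M/M_{i+1}\to 0$: the term $\tor_1^A(\C,\overline{M_{i+1}})$ is $\C[X]$-free of rank $2n_i$ (here $d=2$ enters), while $h_1(M/M_i)=n_i$ (this is where the hypothesis $\chi^{A,\phi}(M)=0$ is used, via $\chi^{A,\phi}(M/M_i)=0$ together with $h_0(M/M_i)=0$); a rank count then gives $n_{i+1}\geq 2n_i-n_i=n_i$.

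Your descending-induction argument is more elementary: it works purely with $\tor_0$ and right-exactness of $\C\otimes_A-$, and in fact never invokes either the hypothesis $\chi^{A,\phi}(M)=0$ or the restriction $d=2$. You therefore prove a slightly stronger statement than the one written. What the paper's approach buys, on the other hand, is the quantitative inequality $n_{i+1}\geq n_i$, which gives explicit control over the growth of the filtration when $M$ is not admissible; your dichotomy yields no such bound.
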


\begin{proof}
Remarquons que les conditions $h_0(M)=0$ et $\chi^{A,\phi}(M)=0$ impliquent $h_2(M)=h_1(M)$, et que cette valeur commune est non nulle si et seulement si $M$ n'est pas admissible. Soient $(M_i)_{i \geq 0}$ et $(V_i)_{i \geq 0}$ des suites vérifiant les hypothèses du lemme \ref{lemm:suite}. Supposons $M$ non admissible et montrons alors que la suite $(M_i)_{i \geq 0}$ est strictement croissante. Il suffit pour cela prouver que $\forall i \geq 0$, $n_i:=\dim_\C(V_i) \geq 1$. Par construction $n_0=h_2(M) \geq 1$. Supposons alors que pour un $i \geq 0$, on ait $n_i \geq 1$. On a une suite exacte
\begin{equation*}
0 \rightarrow \tor_2^A(\C,M/M_{i})_{tors} \rightarrow \tor_2^A(\C,M/M_{i+1}) \rightarrow \tor_1^A(\C,\overline{M_{i+1}}) \rightarrow \tor_1^A(\C,M/M_{i}).
\end{equation*}
D'après l'exemple \ref{exem:calcul}, le $\C[X]$-module $\tor_1^A(\C,\overline{M_{i+1}})$ est libre de rang $2n_i$. Comme $M/M_{i}$ est un quotient de $M$, on a $h_0(M/M_{i}) \leq h_0(M)=0$, donc $h_0(M/M_{i})=0$. On a $\chi^{A,\phi}(M_{i})=\chi^{A,\phi}(M)=0$, donc $\chi^{A,\phi}(M/M_{i})=0$ et $\tor_1^A(\C,M/M_{i})$ est un $\C[X]$-module de rang $n_i$. On en déduit $n_{i+1} \geq n_i \geq 1$, ce qui achève la récurrence.\\
Posons donc $\widetilde{M}=\bigcup_{i \geq 0} M_i$. On a $M/\widetilde{M}=\varinjlim_i (M/M_i)$, donc $\tor_2^A(\C,M/\widetilde{M})=\varinjlim_i \tor_2^A(\C,M/M_i)=\tor_2^A(\C,M)_{tors}$. Comme $M$ est de présentation finie, le module $\tor_2^A(\C,M)_{tors}$ est fini, donc $M/\widetilde{M}$ est admissible. Mais $M/\widetilde{M}$ n'est pas de présentation finie car s'il l'était, par cohérence de $A[X]_\phi$, $\widetilde{M}$ serait de type fini, et donc $\widetilde{M}=M_i$ pour un certain $i$, contredisant le caractère strictement croissant de la suite $(M_i)$.
\end{proof}

\begin{rema}
Sous les hypothèses de la proposition \ref{prop:EP}, l'hypothèse $\chi^{A,\phi}(M)=0$ est automatiquement vérifiée.
\end{rema}

\section{Applications aux représentations de $\mathrm{GL}_2(\b)$}

Soit $p$ un nombre premier. On fixe $\b$ une extension finie de degré $d$ de $\mathbb{Q}_p$. On note $\mathcal{O}_\b$ son anneau d'entiers, $\res$ son corps résiduel, $f$ le degré de $\res$ sur $\mathbb{F}_p$, $e$ l'indice de ramification de $\b$ sur $\mathbb{Q}_p$ et on choisit $\varpi\in\mathcal{O}_\b$ une uniformisante de $\b$. Si $e=1$, on fait le choix naturel $\varpi=p$.

On pose $G=\mathrm{GL}_2(\b)$, on désigne par $Z$ son centre. On note $\K=Z\mathrm{GL}_2(\mathcal{O}_\b)\subset G$, $I\subset\mathrm{GL}_2(\mathcal{O}_\b)$ le sous-groupe d'Iwahori supérieur et $I_1$ le pro-$p$-sous-groupe de Sylow de $I$. On note $K_1=\mathrm{Id}+\varpi\mathrm{M}_2(\mathcal{O}_\b)\subset\mathrm{GL}_2(\mathcal{O}_\b)$ et $\Gamma=\mathrm{GL}_2(\mathcal{O}_\b)/K_1\simeq\mathrm{GL}_2(\res )$. Le groupe $G$ contient les éléments particuliers suivants : $w=\left( \begin{smallmatrix} 0 & 1 \\ 1 & 0 \end{smallmatrix}\right)$, $\alpha=\left(\begin{smallmatrix}\varpi&0\\0&1\end{smallmatrix}\right)$, $\beta=w \alpha$. On désigne par $U$ le sous-groupe des matrices unipotentes supérieures de $\mathrm{GL}_2(\mathcal{O}_\b)$. De même, on pose $U^-$ le sous-groupe des matrices unipotentes inférieures de $\mathrm{GL}_2(\mathcal{O}_\b)$. Enfin on note $H$ le sous-groupe de $\mathrm{GL}_2(\mathcal{O}_\b)$ constitué des matrices $\left( \begin{smallmatrix} a & 0 \\ 0 & 1 \end{smallmatrix}\right)$ avec $a\in\mathcal{O}_\b^{\times}$ et $\widetilde{H}=HU$ le sous-groupe de $G$ engendré par $H$ et $U$.

On fixe désormais $\C$ une extension de $\mathbb{F}_p$ et un plongement $\tau_0:\,k_\b\hookrightarrow k$. Toutes les représentations apparaissant dans la suite sont des représentations $\C$-linéaires.

\subsection{Représentations de présentation finie}

Nous posons désormais $A=\C[[U]]$ l'algèbre d'Iwasawa de $U$. Comme $U$ est un $\mathbb{Z}_p$-module libre de rang $d$, $A$ est bien une $\C$-algèbre locale noethérienne régulière de corps résiduel isomorphe à $\C$. On a $\alpha\widetilde{H}\alpha^{-1}\subset\widetilde{H}$ et $\alpha U \alpha^{-1} \subset U$, et on note $\phi$ l'endomorphisme de $\C[[\widetilde{H}]]$ induit par l'endomorphisme $x \mapsto \alpha x \alpha^{-1}$. L'inclusion $A\hookrightarrow\C[[\widetilde{H}]]$ a ainsi une image stable par $\phi$. Comme $\alpha U\alpha^{-1}\subset U$ est un sous-groupe ouvert d'indice fini, on a $\C[[U]]=\bigoplus_{g\in U/\alpha U\alpha^{-1}}\C[[\alpha U \alpha^{-1}]]g$, ce qui prouve que $\phi$ est un endomorphisme injectif et plat de $A$. On dit qu'un $\C[[\widetilde{H}]][X]_\phi$-module est lisse s'il est localement artinien en tant que $\C[[\widetilde{H}]]$-module.

\begin{exem}
Soit $M=\left( \begin{smallmatrix}\mathcal{O}_\b\backslash\{0\} & \mathcal{O}_\b \\ 0 & 1 \end{smallmatrix}\right) \subset G$. Soit $V$ une représentation du monoïde $M$. Alors $V$ est naturellement un $\C[[\widetilde{H}]][X]_{\phi}$-module de la façon suivante. La structure de $\C[[\widetilde{H}]]$-module provient de l'action de $\widetilde{H}$ sur $V$. L'endomorphisme $\phi_V$ provient de l'action de la matrice $\alpha$. En particulier si $\pi$ est une représentation lisse de $G$, sa restriction à $M$ est un $\C[[\widetilde{H}]][X]_{\phi}$-module lisse.\\
On peut aussi considérer la construction suivante (qui apparaît par exemple dans \cite{Colmezfoncteur} et \cite{HuDiag}). Soit $\pi$ une représentation lisse de $G$ et $\sigma \subset \pi|_\K$ une sous $\K$-représentation de dimension finie sur $\C$. On note $I^+(\pi,\sigma)$ le sous-$A[X]_\phi$-module engendré par $\sigma$. Comme $\sigma$ est stable par $H$ et que $\alpha$ commute à $H$, $I^+(\pi,\sigma)$ est stable par $H$ et a une structure de $\C[[\widetilde{H}]]_{\phi}$-module.
\end{exem}

Le sous-groupe $U$ est stable sous-l'action de $H$ par conjugaison, ainsi $\widetilde{H}$ est isomorphe à un produit semi-direct $U\rtimes H$ et la projection sur le second facteur est un morphisme de groupes $\widetilde{H}\rightarrow H$ dont le noyau est $U$. On obtient ainsi un morphisme continu et surjectif d'anneaux pseudocompacts $\C[[\widetilde{H}]]\rightarrow\C[[H]]$ dont le noyau est l'idéal $\mathfrak{m}_A\C[[\widetilde{H}]]$. Ceci prouve que l'inclusion $\C\hookrightarrow\C[[H]]$ induit un isomorphisme de foncteurs $\C\otimes_A-\xrightarrow{\sim}\C[[H]]\otimes_{\C[[\widetilde{H}]]}-$ sur la catégorie des $\C[[\widetilde{H}]]$-modules. L'isomorphisme $\widetilde{H}\simeq U\rtimes H$ induit un isomorphisme de $\C[[U]]$-modules $\C[[\widetilde{H}]]\simeq\C[[U]]\widehat{\otimes}_\C\C[[H]]$ qui prouve qu'un $\C[[\widetilde{H}]]$-module libre est $\C[[U]]$-plat. Ainsi on a des isomorphismes de foncteurs pour tout $i\geq 0$, $\tor_i^A(\C,-)\simeq\tor_i^{\C[[\widetilde{H}]]}(\C[[H]],-)$, ce qui nous permet de conclure que si $M$ est un $\C[[\widetilde{H}]]$-module, les groupes $\tor_i^A(\C,M)$ sont munis d'une structure de $\C[[H]]_\phi$-module, autrement dit, d'une structure de $\C[[H]]$-module et d'un endomorphisme $\C[[H]]$-linéaire $\phi_{\tor_i^A(\C,M)}$.

\begin{prop}\label{prop:EPG}
Soit $M$ un $\C[[\widetilde{H}]][X]_{\phi}$-module lisse dont le $A[X]_\phi$-module sous-jacent est de présentation finie. Alors $\chi^{A,\phi}(M)=0$.
\end{prop}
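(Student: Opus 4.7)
The plan is to reduce to Proposition \ref{prop:EP} by twisting $\phi$ via the $\widetilde{H}$-action on $M$. The key observation is that although $\phi$ itself is not of the form $X_i \mapsto X_i^p$ in general when $\b/\mathbb{Q}_p$ is ramified, the $e$-th power of $\phi$ becomes so after a twist by a suitable element of $H$.

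Write $\varpi^e = pu$ for some $u \in \mathcal{O}_\b^\times$, and let $h_u = \left(\begin{smallmatrix} u & 0 \\ 0 & 1 \end{smallmatrix}\right) \in H$. Since $\alpha$ commutes with $h_u$ in $G$, the element $\gamma := h_u^{-1}\alpha^e$ normalizes $U$, and conjugation by $\gamma$ on $U \simeq \mathcal{O}_\b$ is multiplication by $u^{-1}\varpi^e = p$. Let $\tilde\phi$ be the induced endomorphism of $A = \C[[U]]$. For any $\mathbb{Z}_p$-basis $(u_i)$ of $U$, setting $X_i := u_i - 1$ gives $\tilde\phi(X_i) = (1+X_i)^p - 1 = X_i^p$ in characteristic $p$, so $\tilde\phi$ is exactly of the form required by Proposition \ref{prop:EP}. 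Endow $M$ with an $A[Y]_{\tilde\phi}$-module structure by letting $Y$ act as $\tilde\phi_M := h_u^{-1}\circ\phi_M^e$; the $\tilde\phi$-semi\-linearity of this operator follows from the identity $h_u^{-1}\phi^e(a) = \tilde\phi(a)\,h_u^{-1}$ in $\C[[\widetilde{H}]]$. For finite presentation: if $m_1,\ldots,m_n$ generate $M$ over $A[X]_\phi$, the smoothness of $M$ as $H$-representation provides an integer $c \geq 1$ with $h_u^c$ stabilizing each $m_i$, and the finite family $\{h_u^k X^j m_i : 0\leq k<c,\, 0\leq j<e,\, 1\leq i\leq n\}$ generates $M$ over $A[Y]_{\tilde\phi}$; coherence of $A[Y]_{\tilde\phi}$ yields finite presentation.

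By Proposition \ref{prop:EP} applied to the $A[Y]_{\tilde\phi}$-module $M$, we obtain $\chi^{A,\tilde\phi}(M) = 0$. It remains to relate $\chi^{A,\tilde\phi}(M)$ to $\chi^{A,\phi}(M)$. Setting $N_i := \tor_i^A(\C,M)$, a finitely generated $\C[X]$-module by \cite[Prop. $2.2$]{EmCoh}, the two $\C[X]$-actions are given by $X_\phi$ (the $\phi_M$-action) and $X_{\tilde\phi} = h_u^{-1}\cdot X_\phi^e$, and $h_u$ acts on $N_i$ as a $\C[X_\phi]$-linear automorphism commuting with $X_\phi$. Since $N_i$ is finitely generated over $\C[X_\phi]$ and $h_u$ has finite order on each generator (by smoothness of the $\widetilde{H}$-representation), $h_u$ has finite order on $N_i$ as a whole. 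Passing to the generic fibre $N_i\otimes_{\C[X_\phi]}\C(X_\phi)$ and using the Jordan decomposition of $h_u$ (after enlarging $\C$ to contain sufficiently many roots of unity), one decomposes this generic fibre into generalized eigenspaces of $h_u$ on which $X_{\tilde\phi}$ is a nonzero scalar multiple of $X_\phi^e$. On each such eigenspace the rank over $\C[X_{\tilde\phi}]$ is $e$ times the rank over $\C[X_\phi]$, so summing yields $h_i(M,\tilde\phi) = e\cdot h_i(M,\phi)$, and therefore $\chi^{A,\tilde\phi}(M) = e\cdot\chi^{A,\phi}(M)$. Combined with Proposition \ref{prop:EP}, this gives $\chi^{A,\phi}(M) = 0$. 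The main obstacle lies in this last step: one must carefully control the possibly non-semisimple contribution of the pro-$p$ part of $h_u \in H$ to the action on $N_i$, which is what makes the generic-rank comparison delicate and requires the passage to the Jordan decomposition in the generic fibre.
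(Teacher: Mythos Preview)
Your strategy coincides with the paper's: replace $\phi$ by a twist $\tilde\phi$ of $\phi^e$ by an element of $H$ so that $\tilde\phi(X_i)=X_i^p$, and apply Proposition~\ref{prop:EP}. However, the step ``coherence of $A[Y]_{\tilde\phi}$ yields finite presentation'' is a genuine gap: left coherence only guarantees that finitely generated \emph{sub}modules of finitely presented modules are finitely presented, not that every finitely generated module is finitely presented. You have only established finite generation. One way to close the gap is to show directly that $\mathrm{coker}(\tilde\phi_M)=M/\phi_M^e(M)$ and $\ker(\tilde\phi_M)=\ker(\phi_M^e)$ are finitely generated $A$-modules---this follows by d\'evissage from the corresponding facts for $\phi_M$, which hold because $M$ is finitely presented over $A[X]_\phi$---and then invoke the criterion \cite[Lemma~1.2]{EmCoh}, as in the proof of Lemma~\ref{lemm:pf}. (The paper's own proof also asserts this finite presentation without detail.)

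Your rank comparison is more elaborate than needed and, as you yourself flag, not fully justified when $h_u$ acts non-semisimply on $N_i$. A cleaner route: once $h_u^c$ acts trivially on $N_i$, the operator $X_{\tilde\phi}^c$ on $N_i$ coincides with $X_\phi^{ec}$, so $\C[X_{\tilde\phi}^c]$ acts through $\C[X_\phi^{ec}]$. Using that $\C[X_{\tilde\phi}]\otimes_{\C[X_{\tilde\phi}^c]}\C(X_{\tilde\phi}^c)\simeq\C(X_{\tilde\phi})$, one obtains
\[
h_i(M,\tilde\phi)=\tfrac{1}{c}\,\mathrm{rank}_{\C[X_\phi^{ec}]}(N_i)=\tfrac{ec}{c}\,h_i(M,\phi)=e\cdot h_i(M,\phi)
\]
directly, with no eigenspace decomposition of $h_u$ required. (Incidentally, the paper asserts equality of the two ranks rather than your factor of $e$; your version is the correct one, though the conclusion $\chi^{A,\phi}(M)=0$ is of course unaffected.)
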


\begin{proof}
Soit $\phi'$ l'endomorphisme de $A$ induit par la conjugaison par la matrice $\left(\begin{smallmatrix}p&0\\0&1\end{smallmatrix}\right)$. On a $p\varpi^{-e}\in\mathcal{O}_\b^{\times}$, donc il existe $u\in H$ tel que $[u]\phi^e=\phi'$, où l'on note $[-]$ le morphisme d'inclusion $H\rightarrow\C[[H]]^{\times}$. Soit $h:\,\C[[\widetilde{H}]][X]_{\phi'}\rightarrow \C[[\widetilde{H}]][X]_{\phi}$ le morphisme d'anneaux défini par $h(\sum_ia_iX^i)=\sum_ia_i[u]^iX^{ei}$. Soit $h_*M$ le module image directe de $M$ par $h$, c'est en particulier un $A[X]_{\phi'}$-module. Comme $h|_A=\mathrm{Id}_A$, on a $\tor_i^A(\C,M)=\tor_i^A(\C,h_*M)$ et $\phi'_{\tor_i^A(\C,h_*M)}=[u]\phi_{\tor_i^A(\C,M)}^e$, ce qui prouve que $\mathrm{rg}_{\C[\phi']}(\tor_i^A(\C,h_*M))=\mathrm{rg}_{\C[\phi]}(\tor_i^A(\C,M))$ et donc $\chi^{A,\phi}(M)=\chi^{A,\phi'}(h_*M)$. Soit $(e_i)_{1\leq i\leq d}$ une base du $\mathbb{Z}_p$-module $\mathcal{O}_\b$. On a $\C[[U]]\simeq\C[[X_1,\dots,X_d]]$ où $X_i=[e_i]-1$. On vérifie facilement que $\phi'(X_i)=X_i^{p}$ pour tout $i$. Par ailleurs, comme $M$ est un $A[X]_\phi$-module de présentation finie, $h_*M$ est un $A[X]_{\phi'}$-module de présentation finie, on peut donc appliquer la proposition \ref{prop:EP} à $h_*M$ et conclure que $\chi^{A,\phi'}(h_*M)=0$.
\end{proof}

Soit $(\sigma,V)$ une représentation lisse du groupe $\K$. On note $\ind_\K^G(\sigma)$ la représentation de $G$ dont l'espace sous-jacent est l'ensemble des fonctions $f:\,G\rightarrow V$ telles que, pour tout $g\in G$, $k\in \K$, $f(gk)=\sigma(k^{-1})f(g)$, et dont le support est compact modulo $\K$. L'action de $G$ est la translation à gauche. Il s'agit d'une représentation lisse de $G$. Si $g\in G$ et $v\in\sigma$, on note $[g,v]$ l'unique fonction de support $g\K$ prenant la valeur $v$ en $g$.

\begin{defi}
On dit qu'une représentation lisse $\pi$ de $G$ est de présentation finie s'il existe une représentation $\sigma$ lisse irréductible de $\K$ et une surjection $G$-équivariante $\ind_\K^G(\sigma)\twoheadrightarrow\pi$ dont le noyau est engendré, en tant que $\C[G]$-module, par un nombre fini d'éléments.
\end{defi}

Rappelons le critère suivant dû à Hu.

\begin{theo}[Hu,Thm. $1.3$ de \cite{HuDiag}]\label{theo:Hu}
Soient $\pi$ une représentation de $G$ lisse irréductible ayant un caractère central, et $\sigma \subset \pi$ une sous-$\K$-représentation lisse irréductible de $\pi$. Alors si $\pi$ est de présentation finie, le $\C$-espace vectoriel $I^+(\pi,\sigma)^U$ est de dimension finie.
\end{theo}

La condition $\dim_\C I^+(\pi,\sigma)^U<+\infty$ est équivalente à l'admissibilité du $A$-module $I^+(\pi,\sigma)$. C'est pourquoi le résultat suivant, généralisation partielle de \cite[Prop. $3.2$]{EmCoh}, nous sera particulièrement utile.

\begin{lemm}\label{lemm:pf}
Soit $M$ un $\C[[\widetilde{H}]][X]_{\phi}$-module lisse. Supposons que le $A[X]_\phi$-module sous-jacent soit de type fini et admissible. Alors $M$ est un $A[X]_\phi$-module de présentation finie.
\end{lemm}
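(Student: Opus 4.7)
The strategy is to use the coherence of $A[X]_\phi$ (\cite[Prop.~$1.3$]{EmCoh}) to reduce finite presentation of $M$ to finite generation of a certain kernel, then to use the Tor long exact sequence together with admissibility to control the kernel modulo $\mathfrak{m}$, and finally to use the smooth $\widetilde{H}$-structure to lift this control to the kernel itself.

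Since $M$ is finitely generated over $A[X]_\phi$, one can choose a finite-dimensional $\C$-subspace $V\subset M$ generating $M$ as an $A[X]_\phi$-module. Because $H$ is compact and acts smoothly on $M$, every $v\in V$ has a finite-dimensional $H$-orbit, so after replacing $V$ by $\sum_{h\in H}hV$ we may assume $V$ is $H$-stable. Form the exact sequence
$$0\to N\to P\to M\to 0,$$
where $P=A[X]_\phi\otimes_A V$. Then $P$ is $A[X]_\phi$-finitely presented, so by coherence it will suffice to show that $N$ is finitely generated as an $A[X]_\phi$-module.

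The second step is to compute $\tor_0^A(\C,N)=N/\mathfrak{m}N$. By Example~\ref{exem:calcul}, each $\tor_i^A(\C,P)$ is a free $\C[X]$-module of finite rank, and by Corollary~\ref{coro:adm} together with admissibility of $M$, each $\tor_i^A(\C,M)$ is finite-dimensional over $\C$. The Tor long exact sequence gives
$$\tor_1^A(\C,M)\to\tor_0^A(\C,N)\to\tor_0^A(\C,P)\to\tor_0^A(\C,M)\to 0,$$
and one reads off that $\tor_0^A(\C,N)$ is an extension of a $\C[X]$-submodule of a free $\C[X]$-module of finite rank (hence finitely generated by noetherianity of $\C[X]$) by a quotient of the finite-dimensional $\C$-space $\tor_1^A(\C,M)$. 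Consequently $N/\mathfrak{m}N$ is finitely generated over $\C[X]$.

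Lifting finitely many $\C[X]$-generators of $\tor_0^A(\C,N)$ to $N$ and enlarging by $H$-orbits (finite-dimensional by smoothness of the $\widetilde{H}$-action and compactness of $H$), one obtains a finite-dimensional $H$-stable subspace $W\subset N$ whose reduction generates $\tor_0^A(\C,N)$ over $\C[X]$. Setting $N'=A[X]_\phi\cdot W\subset N$, we obtain $N'+\mathfrak{m}N=N$, i.e. $\mathfrak{m}(N/N')=N/N'$. The main obstacle is the resulting Nakayama-type step: we must show that this forces $N/N'=0$. For a general locally artinian $A$-module the implication fails (e.g. the injective envelope $I$ of $\C$ satisfies $\mathfrak{m}I=I$ while $I\ne 0$), so we must bring in the smooth $\widetilde{H}$-structure on $N/N'$ and the fact that $N/N'$ is a subquotient of $P$. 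The natural route is via Pontryagin duality (Proposition~\ref{prop:dual}): dualizing $0\to N'\to N\to N/N'\to 0$ gives $(N/N')^\vee\hookrightarrow N^\vee$, and the condition $\mathfrak{m}(N/N')=N/N'$ translates into $(N/N')^\vee[\mathfrak{m}]=0$. The finite generation of $M^\vee$ over $A$ given by admissibility, the concrete structure of $P^\vee$ as a product of finite-length $A$-modules, and the continuous $\widetilde{H}$-action should then be combined to force $(N/N')^\vee=0$, hence $N=N'$. Once $N$ is finitely generated, the coherence of $A[X]_\phi$ concludes that $M$ is finitely presented.
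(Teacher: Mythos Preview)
Your reduction to showing that $N$ is finitely generated over $A[X]_\phi$ is sound, and the computation of $\tor_0^A(\C,N)$ via the long exact sequence is correct. The gap is exactly where you flag it, and your proposed route through duality does not close it. From $\mathfrak{m}(N/N')=N/N'$ you correctly obtain, via Proposition~\ref{prop:dual}, that $\tor_d^A(\C,(N/N')^\vee)=(N/N')^\vee[\mathfrak{m}]=0$; but for a pseudocompact $A$-module this says only that it has no $\mathfrak{m}$-torsion, a condition satisfied by $A$ itself and by any torsion-free module. None of the extra ingredients you invoke---the product decomposition of $P^\vee$, the $\widetilde H$-action, the admissibility of $M$---turns this into $(N/N')^\vee=0$. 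The Nakayama-type implication for locally artinian modules runs through $\tor_d^A$ (that is, through $(-)[\mathfrak{m}]$), not through $\tor_0^A$, and you are on the wrong side of the duality; there is no statement in \cite{HMS} or elsewhere that applies to a large locally artinian $\widetilde H$-module with vanishing $U$-coinvariants.

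The paper's argument avoids the free module $P$ altogether and works instead with the $A$-linear map $\phi_M\colon\phi^*M\to M$ induced by $X$. Finite generation of $M$ over $A[X]_\phi$ makes $\mathrm{coker}(\phi_M)$ a finitely generated locally artinian $A$-module, hence of finite length. Admissibility makes $M^\vee$ finitely generated over $A$; since $\phi$ is injective, $\phi^*M$ and $M$ have equal corank, so after dualizing, $\phi_M^\vee$ has torsion kernel and therefore torsion cokernel. Thus $\ker(\phi_M)^\vee\simeq\mathrm{coker}(\phi_M^\vee)$ is a \emph{finitely generated torsion} $A$-module with a semilinear $H$-action, and it is precisely here, on a finitely generated torsion module, that \cite[Cor.~4.3]{HMS} applies to force finite length. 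One then concludes by \cite[Lemma~1.2]{EmCoh}. The point is that the $H$-action is brought to bear on a small $A$-module where the structural result bites, rather than on the large quotient $N/N'$ where no comparable tool is available.
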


\begin{proof}
Le morphisme $\phi_M$ induit un morphisme de $A$-modules $\phi^*M \rightarrow M$. Si l'on munit $\phi^*M$ de l'action de $H$ définie par $\gamma(a \otimes m)=\gamma(a) \otimes \gamma(m)$, on vérifie facilement que $\phi_M$ est un morphisme de $A$-modules $H$-équivariant. L'endomorphisme $\phi$ de $A$ est injectif, il se prolonge donc en un endomorphisme $\phi$ du corps des fractions $\mathrm{Frac}(A)$ de $A$. Ainsi $\mathrm{Frac}(A)\otimes_A\phi^*M=\mathrm{Frac}(A)\otimes_{\mathrm{Frac}(A),\phi}(\mathrm{Frac}(A)\otimes_AM)$. Comme $M$ est admissible, les deux $A$-modules $\phi^*M$ et $M$ ont même corang fini. Comme $M$ est de type fini sur $A[X]_\phi$, $\mathrm{coker}(\phi_M)$ est un $A$-module localement artinien de type fini, donc de longueur finie. Ainsi l'application $\phi_M^{\vee} : \, M^{\vee} \rightarrow (\phi^*M)^{\vee}$ a un noyau de $A$-torsion, donc un conoyau de $A$-torsion. Ainsi $\ker(\phi_M)^{\vee}\simeq\mathrm{coker}(\phi_M^{\vee})$ est un $A$-module de torsion et de type fini. Comme $\ker(\phi_M)^{\vee}$ est muni d'une action semi-linéaire de $H$, le corollaire $4.3$ de \cite{HMS} montre que $\ker(\phi_M)^{\vee}$ est un $A$-module de longueur finie, donc $\ker(\phi_M)$ également. Par \cite[Lemma $1.2$]{EmCoh}, le $A[X]_\phi$-module $M$ est de présentation finie.
\end{proof}

\begin{rema}
Le résultat de \cite{HMS} utilisé ici est également une conséquence d'un résultat récent de K.~Ardakov (\cite[Corollary $8.1$]{ArdZal}).
\end{rema}

\begin{coro}\label{coro:nonadm}
Supposons que $[F:\mathbb{Q}_p]=2$. Soit $\mathcal{M}$ un $\C[[\widetilde{H}]][X]_{\phi}$-module lisse de présentation finie tel que $h_0(\mathcal{M})=0$. S'il existe un $\C[[\widetilde{H}]]A[X]_{\phi}$-module quotient $M$ de $\mathcal{M}$ admissible et tel que le noyau de $\tor_2^A(\C,\mathcal{M}) \rightarrow \tor_2^A(\C,M)$ soit sans torsion, alors $\mathcal{M}$ est admissible.
\end{coro}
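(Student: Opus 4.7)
La stratégie consiste à appliquer la proposition \ref{prop:admnonpf} au noyau $K=\ker(\mathcal{M} \twoheadrightarrow M)$, puis à utiliser l'hypothèse de liberté de torsion pour exclure l'alternative non admissible. Il faut d'abord vérifier que $K$ est un $\C[[\widetilde{H}]][X]_\phi$-module lisse de présentation finie. Comme $M$ est un $\C[[\widetilde{H}]][X]_\phi$-module lisse quotient de $\mathcal{M}$, il est de type fini sur $A[X]_\phi$; étant en outre admissible, le lemme \ref{lemm:pf} entraîne qu'il est de présentation finie sur $A[X]_\phi$. La cohérence à gauche de $A[X]_\phi$ implique alors que $K$ est de présentation finie, et il est visiblement lisse comme sous-$\C[[\widetilde{H}]]$-module du module lisse $\mathcal{M}$.

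Je calculerais ensuite les invariants de $K$ requis. L'additivité de $\chi^{A,\phi}$ sur la suite exacte $0 \to K \to \mathcal{M} \to M \to 0$, combinée à la proposition \ref{prop:EPG} appliquée à $\mathcal{M}$ et à l'admissibilité de $M$, donne $\chi^{A,\phi}(K) = \chi^{A,\phi}(\mathcal{M}) - \chi^{A,\phi}(M) = 0$. La suite exacte longue de $\tor$, avec la finitude des $\tor_i^A(\C,M)$ (corollaire \ref{coro:adm}) et l'hypothèse $h_0(\mathcal{M})=0$, fournit par un calcul de rang $\C[X]$-linéaire $h_0(K)=0$ et $h_2(\mathcal{M})=h_2(K)$. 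Comme $d=2$, on a $\tor_3^A(\C,M)=0$, de sorte que la suite exacte commence par
\begin{equation*}
0 \to \tor_2^A(\C,K) \to \tor_2^A(\C,\mathcal{M}) \to \tor_2^A(\C,M),
\end{equation*}
identifiant $\tor_2^A(\C,K)$ au noyau étudié dans l'énoncé, qui est sans torsion par hypothèse. Étant un $\C[X]$-module de type fini sans torsion, $\tor_2^A(\C,K)$ est libre, et en particulier $\tor_2^A(\C,K)_{tors}=0$.

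Il reste à appliquer la proposition \ref{prop:admnonpf} à $K$, qui vérifie toutes les hypothèses. Si $K$ est admissible, alors $h_2(K)=0$, donc $h_2(\mathcal{M})=0$, et $\mathcal{M}$ est admissible par le critère reliant admissibilité et annulation de $h_d$. Sinon il existerait un sous-$A[X]_\phi$-module $\widetilde{K} \subset K$ non de type fini tel que $\tor_2^A(\C,K/\widetilde{K}) \simeq \tor_2^A(\C,K)_{tors}=0$; comme $K/\widetilde{K}$ est localement artinien (admissible, par la proposition), la nullité de $\tor_d^A(\C,K/\widetilde{K})$ forcerait $K/\widetilde{K}=0$, soit $\widetilde{K}=K$ de type fini, contradiction. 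L'obstacle principal réside dans la vérification soigneuse que $K$ hérite bien du statut de $\C[[\widetilde{H}]][X]_\phi$-module lisse de présentation finie avec $h_0=0$ et $\chi^{A,\phi}=0$; une fois celle-ci établie, l'hypothèse de liberté de torsion du noyau élimine mécaniquement l'alternative pathologique et conclut.
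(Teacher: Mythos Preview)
Your argument is correct and follows essentially the same route as the paper: apply lemma \ref{lemm:pf} to $M$ to get finite presentation, deduce by coherence that the kernel $K$ is finitely presented with $h_0(K)=0$ and $\chi^{A,\phi}(K)=0$, identify $\tor_2^A(\C,K)$ with the torsion-free kernel of the statement via $\tor_3^A=0$, and use proposition \ref{prop:admnonpf} to conclude $K$ is admissible. The only cosmetic difference is that the paper closes by noting that $\mathcal{M}$ is an extension of admissible $A$-modules, whereas you pass through $h_2(\mathcal{M})=h_2(K)=0$; both endings are immediate.
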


\begin{proof}
Soit $N$ le noyau de $\mathcal{M} \rightarrow M$. D'après le lemme \ref{lemm:pf}, $M$ est un $A[X]_\phi$-module de présentation finie, donc par cohérence de $A[X]_\phi$, le $A[X]_\phi$-module $N$ est aussi de présentation finie. Comme $M$ est admissible, le corollaire \ref{coro:adm} montre que $h_1(M)=0$ et $\chi^{A,\phi}(M)=0$, donc $h_0(N)=h_0(\mathcal{M})=0$. De plus la proposition \ref{prop:EPG} montre que $\chi^{A,\phi}(\mathcal{M})=0$ et donc $\chi^{A,\phi}(N)=0$. Soit $\widetilde{N} \subset N$ le sous-$A[X]_\phi$-module construit dans la proposition \ref{prop:admnonpf}. On a $\tor_2^A(\C,N/ \widetilde{N})=\tor_2^A(\C,N)_{tors}=\ker(\tor_2^A(\C,\mathcal{M})\rightarrow\tor_2^A(\C,M))_{tors}=0$, et donc le lemme \ref{lemm:inj} implique $N=\widetilde{N}$. Le lemme \ref{lemm:pf} montre que $M$ est de présentation finie, donc par cohérence de $A[X]_\phi$, $N$ est de présentation finie. La proposition \ref{prop:admnonpf} montre que $N$ est admissible. Dans ce cas, $\mathcal{M}$ est une extension de $A$-modules admissibles, donc est admissible.
\end{proof}

Notons la réciproque partielle au théorème \ref{theo:Hu} qui cependant ne nous servira pas dans la suite.

\begin{lemm}\label{lemm:I1fin}
Soit $V$ une représentation admissible de $I_1$ telle que $V^{U\cap I_1}$ et $V^{U^-\cap I_1}$ soient tous les deux de dimension finie. Alors $V$ est un $\C$-espace vectoriel de dimension finie.
\end{lemm}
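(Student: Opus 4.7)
Le plan est de dualiser à l'aide de la dualité de Pontryagin développée dans la section \ref{sec:regulier}, puis d'exploiter la factorisation d'Iwahori de $I_1$ pour contrôler la structure de $\C[[I_1]]$-module à partir des deux hypothèses ``unilatérales''.

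Posons $M := V^{\vee}$. Par la dualité de Pontryagin et le lemme de Nakayama pseudocompact (\cite[VII.B.$0.3.4$]{SGA31}), l'admissibilité de $V$ comme représentation de $I_1$ équivaut à ce que $M$ soit un $\C[[I_1]]$-module de type fini; de plus, pour tout sous-groupe fermé $H \subset I_1$, l'isomorphisme canonique $(V^H)^\vee \simeq M/\mathfrak{m}_H M$, où $\mathfrak{m}_H$ désigne l'idéal d'augmentation de $\C[[H]] \hookrightarrow \C[[I_1]]$, montre que $\dim_\C V^H < \infty$ équivaut à la finitude du $\C[[H]]$-module $M$. Les deux hypothèses deviennent ainsi: $M$ est de type fini sur $\C[[U]]$ et sur $\C[[U^- \cap I_1]]$; il s'agit de montrer que $M$ est de dimension finie sur $\C$.

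J'utiliserais ensuite la factorisation d'Iwahori $I_1 = (U^- \cap I_1) \cdot T_1 \cdot U$, où $T_1 := T \cap I_1$ est la partie pro-$p$ du tore diagonal, qui fournit une décomposition topologique en $\C$-espaces vectoriels
\[
\C[[I_1]] \simeq \C[[U^- \cap I_1]] \,\widehat{\otimes}_\C\, \C[[T_1]] \,\widehat{\otimes}_\C\, \C[[U]].
\]
Un calcul de commutateurs direct montre que $[U, U^- \cap I_1]$ fournit des éléments non triviaux de $T_1$ (modulo termes d'ordre supérieur), de sorte que $U$ et $U^- \cap I_1$ engendrent topologiquement $I_1$.

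Le coeur de la preuve consiste alors à combiner les deux conditions de finitude avec cette structure pour en déduire que l'idéal d'augmentation $J \subset \C[[I_1]]$ agit de façon nilpotente sur $M$, ce qui, joint à la finitude de $M$ sur $\C[[I_1]]$, donne la dimension finie. J'essaierais d'abord de montrer que $M$ est un $\C[[T_1]]$-module de type fini, en exprimant des générateurs via la décomposition d'Iwahori en faisant passer les ``queues'' dans $\C[[U^-\cap I_1]]$ à travers la partie $\C[[U]]$ à l'aide des relations de commutation. La difficulté principale --- et l'obstacle que j'anticipe --- est la dernière étape: rendre rigoureuse l'idée que les deux finitudes de dimension sur des cellules de Bruhat opposées obligent l'action de $\C[[T_1]]$ sur $M$ à se factoriser par un quotient de dimension finie, ce qui permettrait de conclure que $J^n M = 0$ pour un $n$ convenable et donc que $M$ est de dimension finie sur $\C$.
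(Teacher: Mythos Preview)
Your dualization is correct and the Iwahori factorisation is the right structural input, but the proposal stops exactly where the content lies. You explicitly flag the final step --- forcing the $\C[[T_1]]$-action on $M$ to factor through a finite quotient --- as an anticipated obstacle, and indeed no argument is supplied; moreover the preceding step, that $M$ is finitely generated over $\C[[T_1]]$, is itself unproved and not obviously easier. As written this is a plan, not a proof, and I do not see how to push it through along the purely module-theoretic route you propose (passing ``tails'' in $\C[[U^-\cap I_1]]$ across $\C[[U]]$ via commutators) without in effect importing the idea the paper actually uses.

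The paper argues differently and more concretely. After dualising to $M=V^\vee$ (finitely generated over $\C[[I_1]]$), it reduces to showing that every \emph{cyclic} quotient $N$ of $M$ is finite-dimensional, and then un-dualises: $W:=N^\vee$ embeds $I_1$-equivariantly into $C^\infty(I_1,\C)$. The Iwahori decomposition is applied to this function space, viewed as a $(U\cap I_1)$-module, and the key observation is that $\dim_\C W^{U\cap I_1}<\infty$ forces $W$ into $C^\infty(U\cap I_1,\C)\otimes_\C X$ with $\dim_\C X<\infty$, hence into the space of functions right-invariant under some open $H_1\subset U^-\cap I_1$. The symmetric hypothesis yields an open $H_2\subset U\cap I_1$; since $H_1$ and $H_2$ generate an open subgroup of $I_1$, $W$ consists of functions of bounded level and is therefore finite-dimensional. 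This exhibits directly an open subgroup of $I_1$ acting trivially on $W$, bypassing entirely the abstract nilpotence argument you were trying to set up.
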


\begin{proof}
Soit $M=V^{\vee}$ son dual de Pontryagin. C'est un $\C[[I_1]]$-module de type fini. Soit $N$ un quotient cyclique de $M$. La $I_1$-représentation $W=N^{\vee}$ vérifie les mêmes hypothèses que $V$. Considérons une inclusion $I_1$-équivariante de $W$ dans l'espace $C^{\infty}(I_1,\C)$ des fonctions localement constantes de $I_1$ dans $\C$, muni de l'action de $I_1$ par translation à gauche. La décomposition d'Iwahori nous donne un isomorphisme $U\cap I_1$-équivariant $C^{\infty}(I_1,\C) \simeq C^{\infty}(U\cap I_1,\C) \otimes_\C C^{\infty}(U^-\cap I_1,\C)$. Comme $W^{U \cap I_1}$ est de dimension finie, il existe un sous-$\C$-espace vectoriel $X \subset C^{\infty}(U^- \cap I_1,\C)$ de dimension finie tel que $W \subset C^{\infty}(U \cap I_1, \C) \otimes_\C X$. Si $H_1 \subset U^- \cap I_1$ est un sous-groupe ouvert tel que $X \subset C^{\infty}((U^- \cap I_1)/H_1,\C)$, l'inclusion précédente implique que $W$ est contenu dans le sous-espace de $C^{\infty}(I_1,\C)$ des fonctions invariantes à droite par $H_1$. De même le fait que $W^{U^- \cap I_1}$ est de dimension finie implique qu'il existe $H_2 \subset U \cap I_1$ sous-groupe ouvert tel que $W$ est aussi contenu dans le sous-espace de $C^{\infty}(I_1,\C)$ constitué des fonctions invariantes par translation à droite par $H_2$. Comme $H_1$ et $H_2$ engendrent un sous-groupe ouvert de $I_1$, on voit que $W$ est de dimension finie. Pour conclure, tous les quotients cycliques de $M$ sont de dimension finie sur $\C$. Comme $M$ est de type fini sur $\C[[I_1]]$, on voit que $M$ est de dimension finie sur $\C$.
\end{proof}

\begin{coro}
Soient $\pi$ une représentation lisse de $G$ et $\sigma \subset \pi$ une sous-$\K$-représentation lisse irréductible de $\pi$. Supposons que $I^+(\pi,\sigma)^U$ soit de dimension finie. Alors $\pi$ est de présentation finie.
\end{coro}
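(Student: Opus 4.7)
La première étape sera de traduire l'hypothèse via la dualité. Par l'isomorphisme $\tor_d^A(\C,-)\simeq (-)^U$ rappelé au début de la section \ref{sec:regulier} (où $d=[\b:\mathbb{Q}_p]$), la condition $\dim_\C I^+(\pi,\sigma)^U<\infty$ équivaut à l'admissibilité de $I^+(\pi,\sigma)$ comme $A$-module. Comme $I^+(\pi,\sigma)$ est engendré comme $\C[[\widetilde{H}]][X]_{\phi}$-module par le sous-espace $\sigma$ de dimension finie, il est en particulier de type fini sur $A[X]_\phi$; le lemme \ref{lemm:pf} impliquera alors qu'il est un $A[X]_\phi$-module de présentation finie.

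On se ramènera ensuite à extraire des générateurs du noyau de $\ind_\K^G(\sigma)\twoheadrightarrow\pi$. Quitte à remplacer $\pi$ par le sous-$\C[G]$-module qu'engendre $\sigma$ (ce qui laisse $I^+(\pi,\sigma)$ inchangé), on suppose $\pi=\C[G]\cdot\sigma$, d'où par adjonction une surjection canonique $\mathcal{I}:=\ind_\K^G(\sigma)\twoheadrightarrow\pi$ de noyau $N$. La surjection naturelle d'$A[X]_\phi$-modules
\begin{equation*}
A[X]_\phi\otimes_A\sigma\twoheadrightarrow I^+(\pi,\sigma),\qquad X^n\otimes v\mapsto\alpha^n\cdot v,
\end{equation*}
aura, par présentation finie de $I^+(\pi,\sigma)$, un noyau $J$ de type fini sur $A[X]_\phi$. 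Soient $r_1,\dots,r_s$ des générateurs de $J$; leurs relevés naturels $\tilde r_j\in\mathcal{I}$ (via $X^n\otimes v\mapsto[\alpha^n,v]$, étendu par $A$-linéarité) appartiennent à $N$ puisqu'ils s'envoient sur $0$ dans $\pi$.

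L'étape finale — qui constituera l'obstacle principal — consistera à démontrer que $N$ est engendré comme $\C[G]$-module par $\tilde r_1,\dots,\tilde r_s$, ce qui est équivalent à l'affirmation que tout sous-$\C[G]$-module $N'$ de $\mathcal{I}$ est $\C[G]$-engendré par son intersection avec $I^+(\mathcal{I},\sigma)$. La difficulté vient de ce que les $\tilde r_j$ n'encodent que les relations liant les sections $[\alpha^n,v]$ pour $n\geq 0$, correspondant à la direction « positive » dans l'arbre de Bruhat-Tits de $\mathrm{PGL}_2(\b)$, alors que $\mathcal{I}$ contient des sections associées à des sommets arbitrairement éloignés. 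L'argument s'appuiera sur la génération de $G$ par $\K$ et $\alpha$, ainsi que sur le fait que l'élément $w\in\K$ échange les orientations positive et négative. En procédant par récurrence sur la distance au sommet de base, on montrera que tout $x\in N$ peut être ramené, modulo $\C[G]\cdot\{\tilde r_j\}$, à un élément de $N\cap I^+(\mathcal{I},\sigma)$. Or par construction ce dernier coïncide avec le sous-$A[X]_\phi$-module engendré par $\tilde r_1,\dots,\tilde r_s$ (conséquence directe de la définition de $J$), ce qui permettra de conclure $N=\C[G]\cdot\{\tilde r_j\}_{j=1}^s$ et donc la présentation finie de $\pi$.
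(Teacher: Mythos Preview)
Your first two steps are fine: the admissibility of $I^+(\pi,\sigma)$ follows from the hypothesis, and Lemma~\ref{lemm:pf} then gives finite presentation of $I^+(\pi,\sigma)$ over $A[X]_\phi$, so that $J=\ker\big(A[X]_\phi\otimes_A\sigma\to I^+(\pi,\sigma)\big)$ is finitely generated. The identification $N\cap I_{\geq 0}(\sigma)=J$ (via Lemma~\ref{lemm:iso}) is also correct.

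The gap is entirely in your ``étape finale''. The assertion that every $\C[G]$-submodule $N'$ of $\ind_\K^G(\sigma)$ is $\C[G]$-generated by $N'\cap I_{\geq 0}(\sigma)$ is not a generality about the tree; it is precisely the deep point, and your sketch (``récurrence sur la distance'', ``$w$ échange les orientations'') does not supply an argument. Knowing a finite set of relations among the $[\alpha^n,v]$ for $n\geq 0$ gives no a~priori control over relations supported on $I\alpha^m\K$ for $m<0$: acting by $w$ or by $\K$ turns a relation in $I_{\geq 0}(\sigma)$ into one supported in a single half-tree emanating from the origin, not into an arbitrary relation. Producing the missing relations is exactly what the theory of canonical diagrams in \cite{HuDiag} does, and there is no shortcut here that bypasses that input.

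The paper's proof takes a completely different route and never tries to exhibit generators of $N$. Instead it exploits the symmetry $\beta=w\alpha$: since $\beta$ sends $I^+(\pi,\sigma)^U$ isomorphically onto $I^-(\pi,\sigma)^{U^-\cap I_1}$, the latter is also finite-dimensional. Then $D_1(\pi,\sigma)=I^+(\pi,\sigma)\cap I^-(\pi,\sigma)$ is an admissible $I_1$-representation whose $U\cap I_1$- and $U^-\cap I_1$-invariants are both finite-dimensional; Lemma~\ref{lemm:I1fin} forces $D_1(\pi,\sigma)$ itself to be finite-dimensional. One then invokes the converse direction of \cite[Thm.~1.3]{HuDiag}, which says that finiteness of $D_1(\pi,\sigma)$ implies finite presentation of~$\pi$. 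In other words, the hard step you left open is delegated to Hu's theorem; your lifts $\tilde r_j$ and the finite presentation of $I^+(\pi,\sigma)$ over $A[X]_\phi$ play no role in the argument.
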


\begin{proof}
On utilise les résultats et notations de \cite{HuDiag}. En remarquant que $\beta(I^+(\pi,\sigma)^U)=I^-(\pi,\sigma)^{U^-\cap I_1}$, on voit que $I^-(\pi,\sigma)^{U^-\cap I_1}$ est de dimension finie. En conséquence, $D_1(\pi,\sigma)=I^+(\pi,\sigma)\cap I^-(\pi,\sigma)$ vérifie les hypothèses du lemme \ref{lemm:I1fin} et est par conséquent de dimension finie. Par \cite[Thm. $1.3$]{HuDiag}, $\pi$ est de présentation finie.
\end{proof}

\subsection{Représentations supersingulières}

Soit $G=\coprod_{m\in\mathbb{Z}}I\alpha^m\K$ la décomposition de Cartan-Iwahori de $G$. Pour $i\in\mathbb{Z}$, notons $R_i(\sigma)$ le sous-espace de $\ind_\K^G(\sigma)$ constitué des fonctions à support dans $I\alpha^i\K$ et $I_{\geq i}(\sigma)=\bigoplus_{m\geq i}R_m(\sigma)$. Pour $m\geq0$, on a $\alpha I\alpha^m\K\subset I\alpha^{m+1}\K$, donc $I_{\geq i}(\sigma)$ est un sous-$A[X]_\phi$-module de $\ind_\K^G(\sigma)$ pour $i\geq0$.

Soit $\sigma$ une représentation lisse irréductible de $\K$. Il existe un isomorphisme canonique de $\C$-espaces vectoriels entre $\mathrm{End}_G(\ind_\K^G(\sigma))$ et l'espace des fonctions $f:\,G\rightarrow\mathrm{End}_\C(\sigma)$ vérifiant, pour tout $(k_1,g,k_2)\in\K\times G\times\K$, $f(k_1gk_2)=\sigma(k_1)\circ f(g)\circ\sigma(k_2)$ et à support compact modulo $\K$. L'existence de cet isomorphisme est prouvé par Barthel et Livné dans \cite[\S$2$]{BLmodular}. En utilisant la classification des représentations lisses irréductibles de $\K$, ils prouvent également qu'il existe un isomorphisme d'algèbres $\mathrm{End}_G(\ind_{KZ}^G(\sigma))\simeq\C[T]$ et que l'on peut choisir l'endomorphisme $T$ correspondant à une fonction $f:\,G\rightarrow\mathrm{End}_\C(\sigma)$ à support dans $\K\alpha\K$ sous l'isomorphisme précédent. On pose alors $\pi(\sigma,0)=\ind_\K^G(\sigma)/T(\ind_\K^G(\sigma))$. Suivant la terminologie de \cite{BLmodular}, on dit qu'une représentation $\pi$ de $G$ lisse et irréductible est supersingulière s'il existe une représentation lisse irréductible $\sigma$ de $\K$ telle que $\pi$ est isomorphe à un quotient de $\pi(\sigma,0)$. La classification des représentations non supersingulières fait l'objet du théorème $34$ de \cite{BLmodular}.\\

Soit $\pi$ une représentation lisse irréductible supersingulière de $G$, $\sigma$ une représentation lisse irréductible de $\K$ et $\varphi:\,\pi(\sigma,0)\twoheadrightarrow\pi$ une surjection $G$-équivariante. Identifions $\sigma$ à la sous-$U$-représentation de $\ind_\K^G(\sigma)$ par l'isomorphisme $v\mapsto [1,v]$. Comme $\sigma$ engendre $\ind_\K^G(\sigma)$ en tant que représentation de $G$, la restriction de $\varphi$ à $\sigma$ est injective, et donc nous pouvons considérer, via $\varphi$, $\sigma$ comme une sous-représentation de $\pi$. Le sous-espace vectoriel $I_{\geq0}(\sigma)\subset\ind_\K^G(\sigma)$ est le sous-$\C$-espace vectoriel engendré par le monoïde $\left(\begin{smallmatrix}\mathcal{O}_\b\backslash\{0\}&\mathcal{O}_\b\\0&1\end{smallmatrix}\right)$ et $\sigma$, donc $I^+(\pi,\sigma)\subset\pi$ est par définition l'image de $I_{\geq0}(\sigma)$ par $\varphi$. Les formules de la proposition $5$ de \cite{BLmodular} montrent que si $f\in R_m(\sigma)$, alors $T(f)\in R_{m-1}(\sigma)\oplus R_{m+1}(\sigma)$, en particulier $T(I_{\geq1}(\sigma))\subset I_{\geq0}(\sigma)$. L'application $\varphi$ se factorise donc en une surjection de $A[X]_\phi$-modules $I_{\geq0}(\sigma)/(T(I_{\geq1}(\sigma)))\twoheadrightarrow I^+(\pi,\sigma)$.

\begin{lemm}\label{lemm:iso}
L'injection $\sigma\rightarrow I_{\geq0}(\sigma)$ envoyant $v\in\sigma$ sur $[1,v]$ induit un isomorphisme de $A[X]_\phi$-modules $A[X]_\phi\otimes_A\sigma\xrightarrow{\sim}I_{\geq0}(\sigma)$. Pour tout $m\geq0$, cet isomorphisme identifie alors le sous-$A$-module $A X^m\otimes_A\sigma$ de $A[X]_\phi\otimes_A\sigma$ à $R_m(\sigma)$ et le sous-$A[X]_\phi$-module $A[X]_\phi X^m(A\otimes_A\sigma)\subset A[X]_\phi\otimes_A\sigma$ à $I_{\geq m}(\sigma)$.
\end{lemm}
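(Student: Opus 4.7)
Mon plan serait de construire explicitement l'isomorphisme. Je définirais un morphisme $\Phi : A[X]_\phi \otimes_A \sigma \to I_{\geq 0}(\sigma)$ par la formule $aX^m \otimes v \mapsto a \cdot [\alpha^m, v]$. Le point clé pour vérifier que cette formule passe au quotient tensoriel est l'identité $[\alpha^m, bv] = \phi^m(b) \cdot [\alpha^m, v]$ pour $b \in U$ et $v \in \sigma$, qui résulte directement de la relation $\alpha^m b = (\alpha^m b \alpha^{-m}) \alpha^m = \phi^m(b) \alpha^m$ dans $G$. Cette identité s'étend par $\C$-linéarité et continuité à $b \in A = \C[[U]]$, et assure la compatibilité de $\Phi$ avec la relation $aX^m b = a \phi^m(b) X^m$ dans $A[X]_\phi$. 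La $A[X]_\phi$-linéarité de $\Phi$ s'en déduit immédiatement par construction.

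Ensuite je décomposerais les deux membres suivant leur graduation naturelle. L'écriture $A[X]_\phi = \bigoplus_{m \geq 0} AX^m$, stable par la multiplication à droite par $A$ puisque $X^m b = \phi^m(b) X^m$, donne $A[X]_\phi \otimes_A \sigma = \bigoplus_{m \geq 0} (AX^m \otimes_A \sigma)$. La décomposition de Cartan-Iwahori fournit de même $I_{\geq 0}(\sigma) = \bigoplus_{m \geq 0} R_m(\sigma)$, et $\Phi$ respecte ces graduations par construction. Il suffit donc de prouver que chaque restriction $\Phi_m : AX^m \otimes_A \sigma \to R_m(\sigma)$ est un isomorphisme.

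Pour cela j'établirais la décomposition $I\alpha^m \K = \bigsqcup_{u \in U/\alpha^m U \alpha^{-m}} u\alpha^m \K$ pour $m \geq 0$. En utilisant la décomposition d'Iwahori $I = U \cdot H \cdot (I \cap U^-)$, et le fait que $H \subset \K$ et $\alpha^{-m}(I \cap U^-)\alpha^m \subset I \cap U^- \subset \K$ pour $m \geq 0$, on obtient $I\alpha^m \K = U \alpha^m \K$ ; la vérification que $u_1 \alpha^m \K = u_2 \alpha^m \K$ équivaut à $u_1^{-1} u_2 \in \alpha^m U \alpha^{-m}$ est alors immédiate. On en déduit que $R_m(\sigma) = \bigoplus_{u \in U/\alpha^m U \alpha^{-m}} u \cdot [\alpha^m, \sigma]$, et puisque $[\alpha^m, v]$ est fixé par $\alpha^m U \alpha^{-m}$, le $A$-module $R_m(\sigma)$ s'identifie à $\C[U/\alpha^m U \alpha^{-m}] \otimes_\C \sigma$. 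Du côté source, la décomposition libre $A = \bigoplus_{u \in U/\alpha^m U \alpha^{-m}} u \cdot \phi^m(A)$ comme $\phi^m(A)$-module à droite montre que $AX^m \otimes_A \sigma = (\phi^*)^m \sigma$ s'identifie à $\C[U/\alpha^m U \alpha^{-m}] \otimes_\C \sigma$ de la même façon, et $\Phi_m$ respecte ces deux descriptions.

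Les deux identifications finales s'en déduisent : d'une part $\Phi$ envoie $AX^m \otimes_A \sigma$ bijectivement sur $R_m(\sigma)$ ; d'autre part le sous-$A[X]_\phi$-module engendré par $X^m (A \otimes_A \sigma)$, c'est-à-dire $\bigoplus_{n \geq m} AX^n \otimes_A \sigma$, s'envoie sur $\bigoplus_{n \geq m} R_n(\sigma) = I_{\geq m}(\sigma)$. L'obstacle principal de la preuve sera la décomposition précise de $I\alpha^m \K$ en classes à droite modulo $\K$, mais il s'agit d'un calcul standard reposant essentiellement sur l'inclusion $\alpha^m U \alpha^{-m} \subset U$.
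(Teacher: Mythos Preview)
Your approach is essentially the same as the paper's: both rely on the coset decomposition $I\alpha^m\K=\coprod_{u\in U/\alpha^m U\alpha^{-m}} u\alpha^m\K$ to identify $R_m(\sigma)$ with $(\phi^*)^m\sigma$ and then assemble the graded pieces. The paper phrases this as $R_m(\sigma)\simeq\ind_{\alpha^m U\alpha^{-m}}^U(\alpha^m R_0(\sigma))$ and concludes by a dimension count, while you write out the map and the decompositions explicitly, but the content is the same.

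One correction: the claim that $[\alpha^m,v]$ is fixed by $\alpha^m U\alpha^{-m}$ is false in general. For $u'=\alpha^m u_0\alpha^{-m}$ one has $u'\cdot[\alpha^m,v]=[\alpha^m,u_0 v]$, so only vectors $v\in\sigma^U$ give fixed elements. What is true, and what you actually need, is that the \emph{subspace} $[\alpha^m,\sigma]$ is stable under $\alpha^m U\alpha^{-m}$ and that the supports $u\alpha^m\K$ are pairwise disjoint for $u$ ranging over coset representatives; this gives the direct sum $R_m(\sigma)=\bigoplus_{u} u\cdot[\alpha^m,\sigma]$ and hence the desired identification with $\C[U/\alpha^m U\alpha^{-m}]\otimes_\C\sigma$ as $\C$-vector spaces. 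With this fix your argument goes through.
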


\begin{proof}
Soit $m\geq0$. Le sous-espace $\alpha^m(R_0(\sigma))$ de $\ind_\K^G(\sigma)$ coïncide avec l'ensemble des fonctions à support dans $(\alpha^mU\alpha^{-m})\alpha^m\K$. Comme $I\alpha^m\K=U\alpha^m\K=\coprod_{u\in U/\alpha^mU\alpha^{-m}}u\alpha^m\K$, on a un isomorphisme de $U$-modules entre $R_m(\sigma)$ et $\ind_{\alpha^mU\alpha^{-m}}^U(\alpha^m(R_0(\sigma)))$. Ceci prouve que l'application $h:\,A[X]_\phi\otimes_A\sigma\rightarrow I_{\geq0}(\sigma)$ est surjective et envoie $A X^m\otimes_A \sigma\simeq(\phi^*)^m\sigma$ sur $R_m(\sigma)$. Comme ces deux $\C$-espaces vectoriels ont même dimension, la restriction de $h$ à $A X^m\otimes_A \sigma$ est injective, donc $h$ également. La deuxième assertion est une conséquence immédiate de ce qui précède.
\end{proof}

Une conséquence de résultat est que le $A[X]_\phi$-module $I_{\geq0}(\sigma)/T(I_{\geq1}(\sigma))$ est de présentation finie.

Comme $T(R_m(\sigma))\subset R_{m-1}(\sigma)\oplus R_{m+1}(\sigma)$, l'opérateur $T$ peut s'écrire de façon unique comme somme de deux opérateurs $T_+$ et $T_-$ tels que pour tout $m\in\mathbb{Z}$, $T_\pm(R_{m}(\sigma))\subset R_{m\pm1}(\sigma)$. L'opérateur $T$ étant $G$-équivariant, il est clair que les opérateurs $T_+$ et $T_-$ sont $I$-équivariants, en particulier $U$-équivariants.

\begin{lemm}\label{lemm:surjinj}
Pour tout $m\geq1$, l'opérateur $T_-:\,R_{m}(\sigma)\rightarrow R_{m-1}(\sigma)$ est surjectif mais non injectif et l'opérateur $T_+:\,R_m(\sigma)\rightarrow R_{m+1}(\sigma)$ est injectif. De plus l'application $\tor_d^A(T_+)$ (resp. $\tor_0^A(T_-)$) est un isomorphisme $\tor_d^A(\C,R_m(\sigma))\simeq\tor_d^A(\C,R_{m+1}(\sigma))$ (resp. un isomorphisme $\tor_0^A(\C,R_m(\sigma))\simeq\tor_0^A(\C,R_{m-1}(\sigma))$), alors que les applications $\tor_d^A(T_-)$ et $\tor_0^A(T_+)$ sont nulles.
\end{lemm}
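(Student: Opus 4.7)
The plan is to extract explicit formulas for $T_+$ and $T_-$ on $R_m(\sigma)$ from the description of the Hecke operator $T$ in Barthel--Livné (\cite[\S$3$]{BLmodular}). Recall that $T$ corresponds to a $\K$-biequivariant function $\Phi : G \to \mathrm{End}_\C(\sigma)$ supported on $\K\alpha\K$, and that the stabilizer relations of $\alpha$ in $\K\times\K$ force $\Phi(\alpha) \in \mathrm{End}_\C(\sigma)$ to factor canonically as the composition $\sigma \twoheadrightarrow \sigma_U \xrightarrow{\sim} \sigma^{U^-} \hookrightarrow \sigma$. A system of representatives of the $q+1$ cosets of $\K\alpha\K/\K$ is $\{u_\lambda \alpha : \lambda \in \res\} \cup \{\beta\}$, where the $u_\lambda \in U$ run over representatives of $U/\alpha U\alpha^{-1} \simeq \res$ and $q = |\res|$.

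A direct matrix computation gives, for every $m \geq 1$, the identities $\alpha^m u_\lambda \alpha = u''_\lambda \alpha^{m+1}$ with $u''_\lambda := \alpha^m u_\lambda \alpha^{-m} \in U$, and $\alpha^m \beta = \alpha^{m-1}\cdot(w\varpi)$ with $w\varpi \in \K$. Combined with $\sigma(w\varpi)\Phi(\beta^{-1}) = \Phi(\alpha)$ (which follows from $\beta^{-1} = (\varpi^{-1} w)\alpha$ and the $\K$-biequivariance of $\Phi$), these yield, for all $g \in U$ and $v \in \sigma$,
\begin{equation*}
T_+[g\alpha^m, v] = \sum_{\lambda \in \res} [g u''_\lambda \alpha^{m+1}, v], \qquad T_-[g\alpha^m, v] = [g\alpha^{m-1}, \Phi(\alpha) v].
\end{equation*}
Injectivity of $T_+$ is immediate since the $q$ cosets $u''_\lambda \alpha^{m+1}\K$ are pairwise distinct. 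The image of $T_-$ is the $U$-submodule of $R_{m-1}(\sigma)$ generated by $[\alpha^{m-1}, \mathrm{Im}\,\Phi(\alpha)] = [\alpha^{m-1}, \sigma^{U^-}]$; since $\sigma$ is generated by $\sigma^{U^-}$ under $U$ (standard for irreducible $\mathrm{GL}_2(\res)$-representations of dimension $\geq 2$), this image fills $R_{m-1}(\sigma)$, proving surjectivity. Non-injectivity follows from $\ker\Phi(\alpha) \neq 0$ as soon as $\dim\sigma \geq 2$.

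To translate to the Tor claims, I would use $\tor_d^A(\C,-) \simeq (-)^U$ and $\tor_0^A(\C,-) \simeq (-)_U$, together with the Mackey isomorphisms $R_m(\sigma)^U \simeq \sigma^U$ and $R_m(\sigma)_U \simeq \sigma_U$ (both one-dimensional), so that each Tor map in the lemma is a map between one-dimensional spaces. On $U$-coinvariants, the formula for $T_+$ becomes a sum of $q$ identical classes, which vanishes since $q \equiv 0 \pmod p$; on the other hand $T_-$ induces $\bar v \mapsto \overline{\Phi(\alpha)v}$, which is the composition $\sigma_U \xrightarrow{\sim} \sigma^{U^-} \hookrightarrow \sigma \twoheadrightarrow \sigma_U$, and the last arrow is an isomorphism (the projection $\sigma \twoheadrightarrow \sigma_U$ is nonzero on $\sigma^{U^-}$ because $\sigma$ is generated by $\sigma^{U^-}$ under $U$). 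On $U$-invariants, a Mackey reindexing shows $T_+$ sends the generator $f_m := \sum_{\bar u \in U/\alpha^m U\alpha^{-m}} \bar u [\alpha^m, v_0]$ of $R_m(\sigma)^U$ (with $v_0 \in \sigma^U$) to the generator $f_{m+1}$ of $R_{m+1}(\sigma)^U$, an isomorphism; while $T_-(f_m) = 0$ because $\Phi(\alpha)$ annihilates $\sigma^U$ (the projection $\sigma \twoheadrightarrow \sigma_U$ vanishes on $\sigma^U$, since $\sigma^U$ and $\sigma_U$ are eigenspaces for distinct characters of the diagonal torus as soon as $\dim \sigma \geq 2$).

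The main obstacle is pinning down the canonical factorization of $\Phi(\alpha)$ from \cite{BLmodular} and keeping the conventions straight; once this is done, all remaining verifications reduce to short Mackey--Frobenius computations.
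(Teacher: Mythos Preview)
Your approach is a direct, explicit computation with the Barthel--Livn\'e formulas for $T_\pm$, and it is essentially correct and complete \emph{when $\dim_\C\sigma\geq 2$}. The paper takes a somewhat different route: it first reduces everything to $m=1$ via the identification $R_m(\sigma)\simeq(\phi^*)^{m-1}R_1(\sigma)$ and the functoriality diagram~\eqref{eq:diagTor}, and then argues more structurally. In particular, instead of reading off the injectivity of $T_+$ from the coset formula, the paper deduces it from the global injectivity of $T$ (\cite[Thm.~19]{BLmodular}) together with the vanishing $T_-|_{(\phi^*\sigma)^U}=0$, via Lemma~\ref{lemm:inj}. Your explicit route has the advantage of being self-contained (no appeal to the injectivity of $T$) and of making the Mackey identities $T_+f_m=f_{m+1}$ and $\tor_0^A(T_+)=0$ (via $q\equiv 0\pmod p$) completely transparent; the paper's route has the advantage of working uniformly without any case distinction on $\dim_\C\sigma$.

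That said, there is a genuine gap in your argument: the one-dimensional case $\dim_\C\sigma=1$ is not covered, and you flag this twice yourself. For the non-injectivity of $T_-$ the fix is trivial (dimension count: $\dim_\C R_m(\sigma)=q^m\dim_\C\sigma>\dim_\C R_{m-1}(\sigma)$, which is exactly what the paper uses). For $\tor_d^A(T_-)=0$, however, your reasoning ``$\Phi(\alpha)$ kills $\sigma^U$ because $\sigma^U$ and $\sigma_U$ carry distinct torus characters'' really fails when $\sigma$ is a character: then $\Phi(\alpha)$ is a nonzero scalar on $\sigma^U=\sigma$. The statement is nonetheless true in that case, but for a different reason: when $\Phi(\alpha)v_0=cv_0$ with $v_0\in\sigma^U$, the sum $T_-(f_m)=\sum_{\bar u\in U/\alpha^mU\alpha^{-m}}\bar u[\alpha^{m-1},cv_0]$ collapses to $q\cdot c\,f_{m-1}=0$. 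Alternatively, you can adopt the paper's uniform argument: since $T_-$ is not injective, its kernel is a nonzero locally artinian $A$-module, hence has nonzero $U$-invariants; as $(\phi^*\sigma)^U$ is one-dimensional this forces $T_-|_{(\phi^*\sigma)^U}=0$. Either patch closes the gap.
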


\begin{proof}
Par le lemme $2$ et la proposition $4$ de \cite{BLmodular}, on a $\dim_\C\sigma^U=\dim_\C\sigma_U=1$.\\
L'isomorphisme du lemme \ref{lemm:iso} associé à l'isomorphisme $A[X]_\phi\otimes_A\sigma\simeq\bigoplus_{m\geq0}(\phi^*)^m\sigma$ induit des isomorphismes de $A$-modules $(\phi^*)^m\sigma\simeq R_m(\sigma)$. Comme l'opérateur $T$ est $G$-équivariant, en identifiant $(\phi^*)^m\sigma$ à $R_m(\sigma)$ via ces isomorphismes, on a $(\phi^*)^{m-1}(T_\pm|_{R_1(\sigma)})=T_\pm|_{R_m(\sigma)}$ pour tout $m\geq1$. Ainsi il suffit de prouver le résultat pour $m=1$ et d'utiliser la platitude du foncteur $\phi^*$ et le diagramme \eqref{eq:diagTor}. Dans le cas $m=1$, la surjectivité de $T_-$ est alors conséquence de la formule explicite suivante : pour $v\in\sigma^U$, $T_-([\alpha,w(v)])=[1,\chi(\varpi)w(v)]$, où $\chi$ désigne le caractère définissant l'action de $Z$ sur $\sigma$, et du fait que $w(\sigma^U)$ engendre $\sigma$ comme $A$-module. Comme $\dim_\C\sigma_U=\dim_\C\phi^*(\sigma)_U=1$, l'application $\tor_0^A(T_-|_{\phi^*\sigma})$ est bijective. Or le $\C$-espace vectoriel $\phi^*\sigma$ est de dimension $p^f\dim_\C\sigma$, l'application $T_-$ n'est donc pas injective. En particulier, $T_-|_{(\phi^*\sigma)^U}$ n'est pas injective. Comme $\dim_\C\sigma^U=1$, on a $\dim_\C \phi^*\sigma^U=1$ et donc $T_-(\phi^*\sigma^U)=0$. Comme l'opérateur $T$ est injectif (\cite[Thm. $19$]{BLmodular}), on doit avoir $T_+|_{(\phi^*\sigma)^U}$ injectif, c'est-à-dire, par application du lemme \ref{lemm:inj}, $T_+$ injectif. On en déduit les assertions sur $\tor_d^A$. De même, $T_+$ ne peut être un isomorphisme, donc $T_+$ n'est pas surjectif, ce qui implique $\tor_0^A(T_+)=0$.
\end{proof}

\subsection{Non admissibilité du conoyau de $T$}

Cette section concerne la structure de $\C[X]$-module de $\tor_{[\b:\mathbb{Q}_p]}^A(\C,I_{\geq0}(\sigma)/T(I_{\geq1}(\sigma)))$. Par commodité, nous supposons ici $[\b:\mathbb{Q}_p]=2$.

\subsubsection{Le cas non ramifié}

Supposons ici $\b$ extension quadratique non ramifiée de $\mathbb{Q}_p$. Si $u\in\res$, on note $[u]\in\mathcal{O}_\b$ le représentant de Teichmüller de $u$. On pose alors
\begin{equation*}
X=\sum_{\lambda\in\res}\lambda^{-1}\left(\begin{matrix}1&[\lambda]\\0&1\end{matrix}\right)\in A,\ Y=\sum_{\lambda\in\res}\lambda^{-p}\left(\begin{matrix}1&[\lambda]\\0&1\end{matrix}\right)\in A.
\end{equation*}

\begin{prop}
Les éléments $X$ et $Y$ forment un système régulier de paramètres de l'idéal maximal $\mathfrak{m}$ de $\C[[U]]$, autrement dit, on a un isomorphisme $\C[[X,Y]]\simeq\C[[U]]$.
\end{prop}

\begin{proof}
Il suffit de montrer que les images $\overline{X}$ et $\overline{Y}$ de $X$ et $Y$ dans $\mathfrak{m}/\mathfrak{m}^2$ forment une $\C$-base de cet espace. Fixons donc $\alpha\in\res$ un élément primitif, c'est-à-dire tel que $(\alpha,\alpha^p)$ soit une $\mathbb{F}_p$-base de $\res$. Si $a\in\res$, posons $u_a=\left(\begin{matrix}1&[a]\\0&1\end{matrix}\right)-1$. On a alors $u_{a^i}\equiv i u_a \, \mod \mathfrak{m}^2$ pour tout $a\in\res$, d'où
\begin{align*}
X&\equiv\sum_{(i,j)\in\mathbb{F}_p^2}(i\alpha+j\alpha^p)^{q-2}(iu_{\alpha}+ju_{\alpha^p})\,\mod\mathfrak{m}^2\\
&\equiv au_{\alpha}+bu_{\alpha^p}\,\mod\mathfrak{m}^2
\end{align*}
où l'on a posé $a=\sum_{(i,j)\in\mathbb{F}_p^2}(i\alpha+j\alpha^p)^{q-2}i$ et $b=\sum_{(i,j)\in\mathbb{F}_p^2}(i\alpha+j\alpha^p)^{q-2}j$. De même, on a
\begin{equation*}
Y\equiv bu_{\alpha}+au_{\alpha^p}\,\mod\mathfrak{m}^2.
\end{equation*}
Comme $([\alpha],[\alpha^p])$ est une base du $\mathbb{Z}_p$-module $\mathcal{O}_\b$, $(u_{\alpha},u_{\alpha^p})$ forme une base du $\C$-espace vectoriel $\mathfrak{m}/\mathfrak{m}^2$. Pour prouver que $(\overline{X},\overline{Y})$ est une base de cet espace, il suffit de prouver que la matrice $\left(\begin{smallmatrix}a&b\\b&a\end{smallmatrix}\right)$ est inversible. Elle est de déterminant $a^2-b^2$, il suffit donc de vérifier que $a\neq\pm b$. Par ailleurs, un calcul facile montre que $a\alpha+b\alpha^p=-1$ et $a\alpha^p+b\alpha=0$, on en déduit que $a\neq\pm b$ et donc le résultat.
\end{proof}

Soit $r$ un entier compris entre $0$ et $p-1$. Le groupe $\mathrm{GL}_2(\res)$ agit naturellement sur l'espace $\C^2$, et donc sur le produit symétrique $\mathrm{Sym}^r(\C^2)$. Pour $g\in\mathrm{GL}_2(\res)$, notons $\rho_r(g)$ l'endomorphisme de $\mathrm{Sym}^r(\C^2)$ ainsi obtenu. On note alors $\mathrm{Sym}^r(\C^2)^{\mathrm{Fr}}$ la représentation de $\mathrm{GL}_2(\res)$ dont l'espace vectoriel sous-jacent est $\mathrm{Sym}^r(\C^2)$ mais pour la quelle l'action de $g\in\mathrm{GL}_2(\res)$ est donnée par $\rho_r(\mathrm{Fr}(g))$, $\mathrm{Fr}$ étant l'automorphisme de $\mathrm{GL}_2(\res)$ induit par l'automorphisme de Frobenius de $\res$. Si $\Vec{r}=(r_0,r_1)$ est un couple d'entiers compris entre $0$ et $p-1$, on note $\mathrm{Sym}^{\Vec{r}}(\C^2)$ la représentation de $\mathrm{GL}_2(\res)$ sur $\mathrm{Sym}^{r_0}(\C^2)\otimes_\C\mathrm{Sym}^{r_1}(\C^2)^{\mathrm{Fr}}$. D'après \cite[Prop. $1$]{BLmodular}, une telle représentation est irréductible.

\begin{prop}\label{prop:gennr}
Soient $\Vec{r}=(r_0,r_1)$ un couple d'entiers compris entre $0$ et $p-1$ et $\mathrm{Sym}^{\Vec{r}}(\C^2)$ la représentation irréductible de $\mathrm{GL}_2(\res)$ définie ci-dessus. Soit $v\in\mathrm{Sym}^{\Vec{r}}(\C^2)^U$ un vecteur non nul. Alors $w(v)$ engendre $\mathrm{Sym}^{\Vec{r}}(\C^2)$ comme $A$-module et l'annulateur de $w(v)$ est l'idéal $(X^{r_0+1},Y^{r_1+1})$.
\end{prop}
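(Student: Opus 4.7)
The plan is to compute the action of $X$ and $Y$ explicitly on a standard basis of $\mathrm{Sym}^{\Vec{r}}(\C^2)$, then to deduce both assertions by a dimension count. First, fix the standard basis $e_1, e_2$ of $\C^2$; since $\mathrm{Sym}^{\Vec{r}}(\C^2)^U$ is one-dimensional by irreducibility, I may take $v = e_1^{r_0}\otimes e_1^{r_1}$, so that $w(v) = e_2^{r_0}\otimes e_2^{r_1}$. The Frobenius twist on the second tensor factor forces the unipotent $u_a := \left(\begin{smallmatrix}1 & [a] \\ 0 & 1\end{smallmatrix}\right)$, $a \in \res$, to act by $u_a \cdot (e_1^\alpha e_2^\beta\otimes e_1^\gamma e_2^\delta) = e_1^\alpha(e_2 + ae_1)^\beta \otimes e_1^\gamma(e_2 + a^p e_1)^\delta$.

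The core step is a character-sum calculation. Expanding the binomials in $X = \sum_\lambda \lambda^{-1}u_\lambda$ and swapping summations yields, for each pair $(i,j)$ with $0 \le i \le \beta \le r_0 < p$ and $0 \le j \le \delta \le r_1 < p$, the sum $\sum_{\lambda\in\res^\times}\lambda^{i+pj-1}$. Its exponent ranges over $\{-1, 0, \dots, p^2-p-1\}$ and is divisible by $q-1 = p^2-1$ only at the value $0$; hence only the term $(i,j) = (1,0)$ survives, with coefficient $-1$. A parallel calculation for $Y$ (where the exponent is $i+pj-p$, forcing $(i,j) = (0,1)$) then gives
\begin{align*}
X\cdot(e_1^\alpha e_2^\beta\otimes e_1^\gamma e_2^\delta) &= -\beta\, e_1^{\alpha+1}e_2^{\beta-1}\otimes e_1^\gamma e_2^\delta,\\
Y\cdot(e_1^\alpha e_2^\beta\otimes e_1^\gamma e_2^\delta) &= -\delta\, e_1^\alpha e_2^\beta\otimes e_1^{\gamma+1}e_2^{\delta-1},
\end{align*}
so $X$ and $Y$ commute and act as ``lowering operators'' on the two tensor factors.

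Iterating on $w(v)$ gives $X^k Y^l \cdot w(v) = (-1)^{k+l}\frac{r_0!}{(r_0-k)!}\frac{r_1!}{(r_1-l)!}\,e_1^k e_2^{r_0-k}\otimes e_1^l e_2^{r_1-l}$, which is a nonzero scalar multiple of a basis vector of $\mathrm{Sym}^{\Vec{r}}(\C^2)$ for $0 \le k \le r_0$, $0 \le l \le r_1$ (the scalar is nonzero because $r_0, r_1 < p$) and vanishes whenever $k > r_0$ or $l > r_1$. This proves simultaneously that $w(v)$ generates $\mathrm{Sym}^{\Vec{r}}(\C^2)$ as an $A$-module and that $(X^{r_0+1}, Y^{r_1+1}) \subset \mathrm{Ann}(w(v))$.

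To close the argument I invoke the preceding proposition, which asserts that $(X,Y)$ is a regular system of parameters of $A$: it identifies $A/(X^{r_0+1},Y^{r_1+1})$ with $\C[[X,Y]]/(X^{r_0+1},Y^{r_1+1})$, whose $\C$-dimension is exactly $(r_0+1)(r_1+1) = \dim_\C\mathrm{Sym}^{\Vec{r}}(\C^2)$. The composition $A/(X^{r_0+1},Y^{r_1+1}) \twoheadrightarrow A/\mathrm{Ann}(w(v)) \xrightarrow{\sim} A\cdot w(v) = \mathrm{Sym}^{\Vec{r}}(\C^2)$ is then a surjection between $\C$-vector spaces of equal finite dimension, hence an isomorphism, and equality of annihilators follows. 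The delicate step in all this is the character-sum calculation: one must carefully track how the Frobenius twist converts the relevant exponent from $i$ to $i+pj$, and verify that this combined exponent remains in a range avoiding all nonzero residues modulo $q-1$.
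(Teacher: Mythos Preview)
Your proof is correct. The one slip is the stated range for the exponent $i+pj-1$: with $i,j$ allowed up to $p-1$ the maximum is $(p-1)+p(p-1)-1=p^2-2$, not $p^2-p-1$. This is harmless, since $p^2-2<q-1$ and the conclusion that $0$ is the only multiple of $q-1$ in range still holds.

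Your route differs from the paper's in execution, though both rest on character sums and the same dimension count at the end. The paper outsources the generation statement to \cite[Lemma~2]{BLmodular} and then, for the annihilator, establishes the congruences $X^r\equiv\sum_\lambda\lambda^{-r}u_\lambda$ and $Y^r\equiv\sum_\lambda\lambda^{-pr}u_\lambda$ modulo $(X^p,Y^p)$, checking separately that $X^p,Y^p$ kill the whole representation and that the relevant sums kill $w(v)$. You instead compute the action of $X$ and $Y$ themselves on the monomial basis, obtaining the clean ``lowering operator'' description; generation, annihilation, and the explicit basis $\{X^kY^l\cdot w(v)\}$ then all fall out at once. Your argument is more self-contained (no appeal to \cite{BLmodular}) and makes the $A$-module structure of $\mathrm{Sym}^{\vec r}(\C^2)$ completely transparent; the paper's argument is terser but leans on the identity for $X^r$ modulo $(X^p,Y^p)$, which itself requires a small inductive or character-sum verification.
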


\begin{proof}
Le premier point est une conséquence de \cite[Lemma $2$]{BLmodular}. Un calcul montre que $X^r\equiv\sum_{\lambda\in\res}\lambda^{-r}u_{\lambda}\,\mod (X^p,Y^p)$ et $Y^r\equiv\sum_{\lambda\in\res}\lambda^{-pr}u_{\lambda}\,\mod (X^p,Y^p)$ pour $1\leq r \leq p-1$. On remarque alors que les éléments $X^p$ et $Y^p$ annulent $\mathrm{Sym}^{\Vec{r}}(\C^2)$ et que pour $0\leq r_0\leq p-2$ (resp. $0\leq r_1\leq p-2$), l'élément $\sum_{\lambda\in\res}\lambda^{-(r_0+1)}u_{\lambda}$ (resp. $\sum_{\lambda\in\res}\lambda^{-p(r_1+1)}u_{\lambda}$) annule $w(v)$, donc au final l'idéal $(X^{r_0+1},Y^{r_1+1})$ également, même si $r_0=p-1$ ou $r_1=p-1$. On conclut en remarquant que $\dim_\C\mathrm{Sym}^{\Vec{r}}(\C^2)=\dim_\C A/(X^{r_0+1},Y^{r_1+1})=(r_0+1)(r_1+1)$.
\end{proof}

\begin{prop}\label{prop:calnr}
Soient $0\leq r_0,r_1 \leq p-1$ et $I=(X^{r_0+1},Y^{r_1+1})$. Soient $s_1:\,A/\phi(I)\rightarrow (A/I)$ et $s_2:\,A/\phi(I)\rightarrow A/\phi^2(I)$ deux morphismes de $A$-modules, avec $s_2$ injectif, alors les noyaux des applications $\tor_1^A(s_1)$ et $\tor_1^A(s_2)$ ont un élément non nul en commun.
\end{prop}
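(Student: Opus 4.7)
My plan is to reduce the statement to an explicit Tor computation and then verify it by identifying which generator of $\phi(I)$ always lies in both kernels. The basic tool is the identification $\tor_1^A(\C,A/J)\simeq J/\mathfrak{m}J$ for any proper ideal $J\subset A$, under which $\tor_1^A$ of a ``multiplication by $\tilde{c}$'' map $s:\,A/J_1\to A/J_2$ (where $\tilde{c}\in A$ lifts $s(1)$ and satisfies $\tilde{c}J_1\subset J_2$) becomes the map $J_1/\mathfrak{m}J_1\to J_2/\mathfrak{m}J_2$, $\overline{f}\mapsto\overline{\tilde{c}f}$. The first concrete step is to determine $\phi$ on generators: from the definitions of $X$ and $Y$, the freshman's dream in characteristic $p$, and $\lambda^{p^2}=\lambda$ for $\lambda\in\res$ (using $[\res:\mathbb{F}_p]=2$), one obtains $\phi(X)=Y^p$ and $\phi(Y)=X^p$. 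Setting $a=p(r_0+1)$, $b=p(r_1+1)$, $a'=p^2(r_0+1)$, $b'=p^2(r_1+1)$, we get $\phi(I)=(Y^a,X^b)$ and $\phi^2(I)=(X^{a'},Y^{b'})$; all three ideals are $\mathfrak{m}$-primary complete intersections, so each $\tor_1^A(\C,-)$ is two-dimensional over $\C$ with basis the classes of their respective generators.

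Next I would compute the two maps by direct monomial bookkeeping. For $s_1$: the constraint $\tilde{c}_1\phi(I)\subset I$ is automatic (since $a\geq r_1+1$ and $b\geq r_0+1$), so any $\tilde{c}_1\in A$ gives such a map; factoring $\tilde{c}_1Y^a=(\tilde{c}_1Y^{a-r_1-1})Y^{r_1+1}$ and reducing modulo $\mathfrak{m}I$ shows that $\tor_1^A(s_1)([Y^a])\neq 0$ requires $a=r_1+1$, and symmetrically $\tor_1^A(s_1)([X^b])\neq 0$ requires $b=r_0+1$. For $s_2$: analysis of monomial supports in $\C[[X,Y]]$ of $\tilde{c}_2\phi(I)\subset\phi^2(I)$ yields $\tilde{c}_2\in(X^{a'},Y^{b'},X^{a'-b}Y^{b'-a})$; the first two terms lie in $\phi^2(I)$ and do not contribute, so $s_2$ is effectively multiplication by $\gamma X^{a'-b}Y^{b'-a}$ with $\gamma\in A$, and since $A/\phi(I)$ is $\mathfrak{m}$-primary, injectivity of $s_2$ forces $\gamma$ to be a unit of $A$. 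The same style of bookkeeping then yields $\tor_1^A(s_2)([Y^a])\neq 0$ only if $a'=b$ and $\tor_1^A(s_2)([X^b])\neq 0$ only if $b'=a$.

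The statement then follows from the pairings $a=r_1+1\Leftrightarrow a'=b\Leftrightarrow(r_0,r_1)=(0,p-1)$ and $b=r_0+1\Leftrightarrow b'=a\Leftrightarrow(r_0,r_1)=(p-1,0)$: thus $[X^b]_{\phi(I)}$ lies in both $\ker(\tor_1^A(s_1))$ and $\ker(\tor_1^A(s_2))$ whenever $(r_0,r_1)\neq(p-1,0)$, while $[Y^a]_{\phi(I)}$ lies in both whenever $(r_0,r_1)\neq(0,p-1)$. Since these two exceptional cases are distinct, a common nonzero element always exists. The main obstacle is the structural analysis of $\tilde{c}_2$: the key observation is that the cross-term $X^{a'-b}Y^{b'-a}$ (forced by the injectivity constraint modulo $\phi^2(I)$) has exponents matching a generator of $I$ or $\phi^2(I)$ only in the two exceptional cases, and this precise matching is exactly what couples the vanishing conditions for $\tor_1^A(s_1)$ and $\tor_1^A(s_2)$.
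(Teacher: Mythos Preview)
Your proof is correct and follows essentially the same route as the paper's. Both arguments identify $\tor_1^A(\C,A/J)$ with $J/\mathfrak{m}J$, compute $\phi(X)=Y^p$ and $\phi(Y)=X^p$, reduce $s_2$ (via injectivity on the socle of $A/\phi(I)$) to multiplication by a unit times $X^{a'-b}Y^{b'-a}$, and then check the two exceptional cases $(r_0,r_1)\in\{(0,p-1),(p-1,0)\}$; your presentation is slightly more symmetric in that you analyze $\bar s_1$ and $\bar s_2$ on both generators simultaneously, whereas the paper first determines when $\bar s_2$ is nonzero and only then checks $\bar s_1$ in the two exceptional cases.
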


\begin{proof}
Les application $s_1$ et $s_2$ se relèvent en des endomorphismes $\tilde{s_1}$ et $\tilde{s_2}$ de $A$ tels que $\tilde{s}_1(\phi(I))\subset I$ et $\tilde{s}_2(\phi(I))\subset\phi^2(I)$. Notons $\overline{s}_1:\,\C\otimes_A\phi(I)\rightarrow\C\otimes_AI$ et $\overline{s}_2:\,\C\otimes_A\phi(I)\rightarrow\C\otimes_A\phi^2(I)$ les applications obtenues en restreignant $\tilde{s}_1$ et $\tilde{s}_2$ à $\phi(I)$ puis en réduisant modulo $\mathfrak{m}$. Les suites exactes $0\rightarrow\phi^m(I)\rightarrow A\rightarrow A/\phi^m(I)\rightarrow0$ donnent des isomorphismes $\tor_1^A(\C,A/\phi^m(I))\simeq\C\otimes_A\phi^m(I)$ transformant $\tor_1^A(s_i)$ en $\overline{s}_i$. Il suffit donc de montrer que les noyaux de $\overline{s}_1$ et $\overline{s}_2$ ont un élément non nul en commun.\\
Remarquons que $\phi(X)=Y^p$ et $\phi(Y)=X^p$, donc $\phi(I)=(X^{p(r_1+1)},Y^{p(r_0+1)})$ et $\phi^2(I)=(X^{p^2(r_0+1)},Y^{p^2(r_1+1)})$. Les morphismes $\tilde{s}_1$  et $\tilde{s}_2$ peuvent s'écrire sous la forme $b\mapsto ba_i$, avec $a_1,a_2\in A$ avec les conditions $X^{p(r_1+1)}a_2\in (X^{p^2(r_0+1)},Y^{p^2(r_1+1)})$ et $Y^{p(r_0+1)}a_2\in (X^{p^2(r_0+1)},Y^{p^2(r_1+1)})$. De plus, le sous-espace de $A/\phi(I)$ (resp. $A/\phi^2(I))$) constitué des éléments annulés par $\mathfrak{m}$ est de dimension $1$ et engendré par $X^{p(r_1+1)-1}Y^{p(r_0+1)-1}$ (resp. $X^{p^2(r_0+1)-1}Y^{p^2(r_1+1)-1}$). Comme $s_2$ est injectif, on a $a_2X^{p(r_1+1)-1}Y^{p(r_0+1)-1}\equiv cX^{p^2(r_0+1)-1}Y^{p^2(r_1+1)-1}\,\mod \phi^2(I)$, pour un certain $c\in\C^{\times}$. On peut donc écrire $a_2X^{p(r_1+1)-1}Y^{p(r_0+1)-1}=cX^{p^2(r_0+1)-1}Y^{p^2(r_1+1)-1}+P_1(X,Y)X^{p^2(r_0+1)}+P_2(X,Y)Y^{p^2(r_1+1)}$. Comme $p(r_1+1)-1\leq p^2(r_0+1)-1$ et $p(r_0+1)-1\leq p^2(r_1+1)-1$, on a $Y^{p(r_0+1)-1}|P_1(X,Y)$ et $X^{p(r_1+1)-1}|P_2(X,Y)$. On en déduit $a_2\equiv cX^{p^2(r_0+1)-p(r_1+1)}Y^{p^2(r_1+1)-p(r_0+1)}\,\mod(X^{p^2(r_0+1)},Y^{p^2(r_1+1)})$. On peut donc choisir $\tilde{s}_2$ tel que $a_2=cX^{p^2(r_0+1)-p(r_1+1)}Y^{p^2(r_1+1)-p(r_0+1)}$.\\
Comme $\C\otimes_A \phi^2(I)$ est un $\C$-espace vectoriel dont une base est donnée par les images de $X^{p^2(r_0+1)}$ et $Y^{p^2(r_1+1)}$, l'application $\overline{s}_2$ est non nulle si et seulement si $p^2(r_0+1)=p(r_1+1)$ ou $p^2(r_1+1)=p(r_0+1)$, c'est-à-dire $(r_0,r_1)=(p-1,0)$ ou $(r_0,r_1)=(0,p-1)$. Dans le premier cas, le noyau de $\overline{s}_2$ est engendré par la classe de $Y^{p(r_0+1)}=Y^{p^2}$, mais on vérifie que cet élément est aussi dans le noyau de $\overline{s}_1$. De même, dans le second cas, le noyau de $\overline{s}_2$ est engendré par la classe de $X^{p(r_1+1)}=X^{p^2}$, qui est dans le noyau de $\overline{s}_1$. Enfin, si $(r_0,r_1)\notin\{(p-1,0),(0,p-1)\}$ les applications $\overline{s}_1$ et $\overline{s}_2$ sont nulles toutes les deux.
\end{proof}

\subsubsection{Le cas totalement ramifié}

Nous considérons à présent le cas où $\b$ est une extension quadratique totalement ramifiée de $\mathbb{Q}_p$. On pose alors
\begin{equation*}
X=\sum_{\lambda\in\res}\lambda^{-1}\left(\begin{matrix}1&[\lambda]\\0&1\end{matrix}\right)\in A,\ Y=\sum_{\lambda\in\res}\lambda^{-1}\left(\begin{matrix}1&\varpi[\lambda]\\0&1\end{matrix}\right)\in A.
\end{equation*}

\begin{prop}
Les éléments $X$ et $Y$ forment un système régulier de paramètres de l'idéal maximal $\mathfrak{m}$ de $\C[[U]]$, autrement dit, on a un isomorphisme $\C[[X,Y]]\simeq\C[[U]]$.
\end{prop}

\begin{proof}
Remarquons tout d'abord comme dans le cas non ramifié que l'élément $X$ est un générateur de l'idéal maximal de $\C[[\mathbb{Z}_p]]$. Ainsi on en déduit que $Y$ est un générateur de l'idéal maximal de $\C[[\varpi\mathbb{Z}_p]]$. On utilise alors l'isomorphisme $\C[[\mathbb{Z}_p]]\widehat{\otimes}_{\C}\C[[\varpi\mathbb{Z}_p]]\xrightarrow{\sim}\C[[\mathcal{O}_\b]]$ provenant de $\mathcal{O}_\b=\mathbb{Z}_p\oplus\varpi\mathbb{Z}_p$.
\end{proof}

Soit $0\leq r \leq p-1$. Par inflation, l'action du groupe $\mathrm{GL}_2(\res)$ sur le $\C$-espace vectoriel $\mathrm{Sym}^r(\C^2)$ donne une représentation lisse irréductible du groupe $\mathrm{GL}_2(\mathcal{O}_\b)$ que l'on désigne par le même symbole.

\begin{prop}\label{prop:genram}
Soit $0\leq r \leq p-1$. Soit $v\in\mathrm{Sym}^{\Vec{r}}(\C^2)^U$ un vecteur non nul. Alors $w(v)$ engendre $\mathrm{Sym}^{r}(\C^2)$ comme $A$-module et l'annulateur de $w(v)$ est l'idéal $(X^{r+1},Y)$.
\end{prop}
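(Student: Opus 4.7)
The plan is to reduce everything to a one-variable computation, exploiting that $\mathrm{Sym}^r(\C^2)$, being inflated from $\mathrm{GL}_2(\res)=\mathrm{GL}_2(\mathbb{F}_p)$, is acted upon trivially by the principal congruence subgroup $K_1=\mathrm{Id}+\varpi\mathrm{M}_2(\mathcal{O}_\b)$. The decomposition $\mathcal{O}_\b=\mathbb{Z}_p\oplus\varpi\mathbb{Z}_p$ induces a decomposition $U=U_0\times U_1$, where $U_0=\{\left(\begin{smallmatrix}1&a\\0&1\end{smallmatrix}\right):a\in\mathbb{Z}_p\}$ and $U_1=\{\left(\begin{smallmatrix}1&\varpi a\\0&1\end{smallmatrix}\right):a\in\mathbb{Z}_p\}$, hence an isomorphism $A\simeq\C[[U_0]]\widehat{\otimes}_\C\C[[U_1]]$. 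By construction, $X\in\C[[U_0]]$ and $Y\in\C[[U_1]]$.

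First, I would show that $Y$ annihilates the whole representation. Indeed, $U_1\subset K_1$, so each matrix $\left(\begin{smallmatrix}1&\varpi[\lambda]\\0&1\end{smallmatrix}\right)$ acts trivially on $\mathrm{Sym}^r(\C^2)$, and therefore $Y$ acts as the scalar $\sum_{\lambda\in\res^{\times}}\lambda^{-1}=0$.

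Next, I would treat the one-variable analogue of Proposition~\ref{prop:gennr}. Setting $t=\left(\begin{smallmatrix}1&1\\0&1\end{smallmatrix}\right)-1$, a generator of the maximal ideal of $\C[[U_0]]$, the expansion $u_\lambda=1+\lambda t+O(t^2)$ immediately yields $X\equiv -t\pmod{t^2}$, so $X$ itself generates the maximal ideal of $\C[[U_0]]$. Since $\mathrm{Sym}^r(\C^2)|_{U_0}$ factors through $U_0/pU_0\cong\mathbb{F}_p$, and the generator of this quotient acts on the irreducible $\mathrm{GL}_2(\mathbb{F}_p)$-representation $\mathrm{Sym}^r(\C^2)$ by a Jordan block of size $r+1$ (the classical analysis behind Lemma~2 of \cite{BLmodular}), the vector $w(v)$---dual to the socle generator $v$---generates $\mathrm{Sym}^r(\C^2)$ cyclically as a $\C[[U_0]]$-module with annihilator exactly $(X^{r+1})$.

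Combining both steps, the $A$-annihilator of $w(v)$ contains $(X^{r+1},Y)$. Since $\dim_\C A/(X^{r+1},Y)=r+1=\dim_\C\mathrm{Sym}^r(\C^2)$ and $w(v)$ generates the representation already as a $\C[[U_0]]$-module (a fortiori as an $A$-module), the quotient map $A/(X^{r+1},Y)\twoheadrightarrow\mathrm{Sym}^r(\C^2)$ is forced to be an isomorphism, so the annihilator is exactly $(X^{r+1},Y)$. The main technical input is the one-variable congruence $X\equiv -t\pmod{t^2}$; this is entirely parallel to, and simpler than, the non-ramified calculation, so no serious obstacle is expected.
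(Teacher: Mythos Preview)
Your proposal is correct and follows the same line as the paper's (very terse) proof, which simply says to argue as in the unramified case using that $\res=\mathbb{F}_p$; your decomposition $U=U_0\times U_1$, the observation that $Y$ acts by zero because $U_1\subset K_1$, and the dimension count are exactly what is intended. One small slip: the expansion should read $u_\lambda=\lambda t+O(t^2)$ (or equivalently $g_\lambda=1+\lambda t+O(t^2)$), not $u_\lambda=1+\lambda t+O(t^2)$, but your conclusion $X\equiv -t\pmod{t^2}$ is unaffected.
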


\begin{proof}
On raisonne comme dans le cas non ramifié, mais c'est ici beaucoup plus simple vu que $\res=\mathbb{F}_p$.
\end{proof}

\begin{lemm}
Pour tout $n\geq0$, $0\leq r \leq p-1$, on a $\phi^{2n}((X^{r+1},Y))=(X^{p^n(r+1)},Y^{p^n})$ et $\phi^{2n+1}((X^{r+1},Y))=(X^{p^{n+1}},Y^{p^n(r+1)})$.
\end{lemm}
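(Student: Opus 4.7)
La preuve procède par récurrence sur $n$, reposant sur un calcul explicite de $\phi$ sur les générateurs $X$ et $Y$.

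Puisque $\phi$ est la conjugaison par $\alpha=\left(\begin{smallmatrix}\varpi&0\\0&1\end{smallmatrix}\right)$, on a $\alpha\left(\begin{smallmatrix}1&a\\0&1\end{smallmatrix}\right)\alpha^{-1}=\left(\begin{smallmatrix}1&\varpi a\\0&1\end{smallmatrix}\right)$, donc immédiatement $\phi(X)=Y$. Pour $\phi(Y)$, l'équation d'Eisenstein de $\varpi$ s'écrit $\varpi^2=pu$ avec $u=a_0+b_0\varpi\in\mathcal{O}_\b^{\times}$ et $a_0\in\mathbb{Z}_p^{\times}$. Posons $T_1=[1]-1\in\C[[\mathbb{Z}_p]]$ et $T_2=[\varpi]-1\in\C[[\varpi\mathbb{Z}_p]]$, de sorte que $A\simeq\C[[T_1,T_2]]$ avec $X\in\C[[T_1]]$ et $Y\in\C[[T_2]]$ différant respectivement de $T_1$ et $T_2$ par des inversibles. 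Utilisant l'identité $(1+T)^p=1+T^p$ en caractéristique $p$, on trouve
\begin{equation*}
\phi(Y)=\sum_{\lambda\in\mathbb{F}_p^{\times}}\lambda^{-1}(1+T_1^p)^{a_0\lambda}(1+T_2^p)^{b_0\lambda}\in\C[[T_1^p,T_2^p]].
\end{equation*}
En réduisant modulo $T_2$, le calcul des premiers coefficients (pour $p>2$, en utilisant $\sum_\lambda \lambda^{-1}=0$ et $\sum_\lambda 1=-1$ dans $\mathbb{F}_p$) donne $\phi(Y)|_{T_2=0}=-\bar a_0 T_1^p+O(T_1^{2p})$, un inversible de $\C[[T_1]]$ multiplié par $T_1^p$. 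Comme $\phi(Y)-\phi(Y)|_{T_2=0}$ est dans $(T_2^p)=(Y^p)$, on obtient la formule clef
\begin{equation}\label{eq:phiY}
\phi(Y)=X^pV+Y^pW,\quad V\in A^{\times},\ W\in A.
\end{equation}

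On procède alors par récurrence, le cas $n=0$ étant traité séparément : $(X^{r+1},Y)$ est bien de la forme voulue, et $\phi((X^{r+1},Y))=(Y^{r+1},\phi(Y))=(Y^{r+1},X^pV)=(X^p,Y^{r+1})$ puisque $Y^pW\in(Y^{r+1})$ (par $r+1\leq p$) et $V$ est inversible. Supposons $\phi^{2n}((X^{r+1},Y))=(X^{p^n(r+1)},Y^{p^n})$. Appliquant $\phi$ et utilisant $\phi(X)=Y$, on a $\phi^{2n+1}((X^{r+1},Y))=(Y^{p^n(r+1)},\phi(Y)^{p^n})$. L'identité de Frobenius en caractéristique $p$ combinée à \eqref{eq:phiY} donne $\phi(Y)^{p^n}=X^{p^{n+1}}V^{p^n}+Y^{p^{n+1}}W^{p^n}$. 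Comme $r+1\leq p$ entraîne $p^n(r+1)\leq p^{n+1}$, le second terme appartient à $(Y^{p^n(r+1)})$, et l'inversibilité de $V^{p^n}\in A^{\times}$ donne $\phi^{2n+1}((X^{r+1},Y))=(X^{p^{n+1}},Y^{p^n(r+1)})$. De même, $\phi^{2n+2}((X^{r+1},Y))=(Y^{p^{n+1}},\phi(Y)^{p^n(r+1)})$; en écrivant $\phi(Y)^{p^n(r+1)}=(\phi(Y)^{p^n})^{r+1}$ et en utilisant la congruence $\phi(Y)^{p^n}\equiv V^{p^n}X^{p^{n+1}}\pmod{Y^{p^{n+1}}}$, on obtient $\phi(Y)^{p^n(r+1)}\equiv V^{p^n(r+1)}X^{p^{n+1}(r+1)}\pmod{Y^{p^{n+1}}}$, d'où $\phi^{2n+2}((X^{r+1},Y))=(X^{p^{n+1}(r+1)},Y^{p^{n+1}})$, ce qui clôt la récurrence.

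L'obstacle principal est l'établissement de la formule \eqref{eq:phiY} avec $V$ inversible : ceci nécessite le calcul explicite des premiers coefficients de $\phi(Y)$ comme série en $T_1,T_2$, et utilise de façon cruciale que $\varpi^2\in p\mathcal{O}_\b^{\times}$ (propre à la ramification totale de degré $2$). Une fois cette formule acquise, le reste n'est qu'une application directe de l'identité de Frobenius en caractéristique $p$ et du contrôle sur les exposants fourni par l'hypothèse $r+1\leq p$.
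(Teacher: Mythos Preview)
Your argument is correct. The key formula $\phi(Y)=X^pV+Y^pW$ with $V\in A^\times$ is established cleanly, and the induction using the Frobenius identity in characteristic $p$ together with the bound $r+1\leq p$ goes through exactly as you say.

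Your route differs from the paper's. The paper observes that $\phi(Y)=u\cdot X^p$ \emph{exactly}, where $u\cdot(-)$ denotes the ring automorphism of $A$ given by conjugation by $\left(\begin{smallmatrix}u&0\\0&1\end{smallmatrix}\right)\in H$; it then invokes the fact that $(X^{r+1},Y)$ is the annihilator of the $\widetilde H$-module $\mathrm{Sym}^r(\C^2)$, hence is $H$-stable, and so are all its $\phi$-iterates. From $\phi((X^{r+1},Y))=(Y^{r+1},u\cdot X^p)$ and $H$-stability one gets $X^p\in\phi((X^{r+1},Y))$, whence the inclusion $(X^p,Y^{r+1})\subset\phi((X^{r+1},Y))$; equality follows by comparing colengths (using that $\phi^*$ multiplies $\C$-dimension by $p$). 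Your approach trades this representation-theoretic input for an explicit power-series computation: you never need to know that the ideal is $H$-stable, nor do you appeal to a dimension count, at the cost of working out the leading coefficient of $\phi(Y)$ by hand (and, as you note, treating the constant term via $\sum_{\lambda}\lambda^{-1}=0$, which uses $p>2$; the paper's formulation has the same implicit issue at $p=2$ since $X$ as defined is then a unit). Both arguments are short; the paper's is more structural, yours is self-contained and makes the mechanism --- the inequality $r+1\leq p$ controlling which monomial survives --- completely transparent.
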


\begin{proof}
On a $\varpi^2=up$ pour $u\in\mathcal{O}_\b^{\times}$, ainsi on a $\phi(X)=Y$ et $\phi(Y)=u\cdot X^p$. L'idéal $(X^{r+1},Y)$ annulant la représentation $\mathrm{Sym}^{\Vec{r}}(\C)$, il est stable sous l'action de $\mathcal{O}_\b^{\times}$, et comme l'action de $\mathcal{O}_\b^{\times}$ commute à celle de $\phi$, il en est de même pour tous les $\phi^n((X^{r+1},Y))$. Ainsi $(X^p,Y^{r+1})\subset \phi((X^{r+1},Y))$. En comparant l'indice de ces deux idéaux on voit qu'il y a égalité. Le cas général se traite de la même façon.
\end{proof}

\begin{prop}\label{prop:calram}
Soient $0\leq r \leq p-1$ et $I=(X^{r+1},Y)$. Soient $s_1:\,A/\phi(I)\rightarrow (A/I)$ et $s_2:\,A/\phi(I)\rightarrow A/\phi^2(I)$ deux morphismes de $A$-modules, avec $s_2$ injectif, alors les noyaux des applications $\tor_1^A(s_1)$ et $\tor_1^A(s_2)$ ont un élément non nul en commun.
\end{prop}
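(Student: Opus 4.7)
The plan is to mirror the approach of Proposition \ref{prop:calnr}: lift $s_1, s_2$ to multiplications by elements $a_1, a_2 \in A$, use the identification $\tor_1^A(\C, A/J) \simeq J/\mathfrak{m}J$ (coming from $0 \to J \to A \to A/J \to 0$) to translate $\tor_1^A(s_i)$ into the $\C$-linear map from $\phi(I)/\mathfrak{m}\phi(I)$ to $I/\mathfrak{m}I$ (respectively $\phi^2(I)/\mathfrak{m}\phi^2(I)$) induced by multiplication by $a_i$, and then exhibit a common non-zero kernel element. By the preceding lemma, $\phi(I) = (X^p, Y^{r+1})$ and $\phi^2(I) = (X^{p(r+1)}, Y^p)$, so $\phi(I)/\mathfrak{m}\phi(I)$ has $\C$-basis $\{\overline{X^p}, \overline{Y^{r+1}}\}$; the plan is to show that the class $\overline{X^p}$ lies in both kernels when $r \leq p-2$, and that $\overline{Y^{r+1}} = \overline{Y^p}$ does when $r = p-1$.

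First we compute the colon ideal $\phi^2(I) : \phi(I)$ that constrains $a_2$. From $\phi^2(I) : X^p = (X^{pr}, Y^p)$ and $\phi^2(I) : Y^{r+1} = (X^{p(r+1)}, Y^{p-r-1})$, the intersection gives $\phi^2(I) : \phi(I) = (X^{p(r+1)}, Y^p, X^{pr} Y^{p-r-1})$, with the expected simplifications at $r = 0$ and $r = p-1$. In the case $r \leq p-2$, on the one hand $X^p = X^{r+1} \cdot X^{p-r-1}$ with $X^{p-r-1} \in \mathfrak{m}$ gives $X^p \in \mathfrak{m} I$ and hence $\overline{s}_1(\overline{X^p}) = 0$; on the other hand, multiplying each of the three generators of $\phi^2(I) : \phi(I)$ by $X^p$ yields $X^{p(r+2)}$, $X^p Y^p$, and $X^{p(r+1)} Y^{p-r-1}$, each lying in $\mathfrak{m} \phi^2(I)$ (the first two because $X^p \in \mathfrak{m}$, the third because $Y^{p-r-1} \in \mathfrak{m}$ when $r \leq p-2$). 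Hence $a_2 X^p \in \mathfrak{m} \phi^2(I)$ for every admissible $a_2$, so $\overline{s}_2(\overline{X^p}) = 0$.

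The case $r = p-1$ is entirely analogous with $Y^p$ playing the role of $X^p$: here $\phi^2(I) : \phi(I) = (X^{p(p-1)}, Y^p)$, the identity $Y^p = Y \cdot Y^{p-1}$ with $Y^{p-1} \in \mathfrak{m}$ yields $\overline{s}_1(\overline{Y^p}) = 0$, and for any admissible $a_2$ the product $a_2 Y^p$ lies in $(X^{p(p-1)} Y^p, Y^{2p}) \subset \mathfrak{m} \phi^2(I)$. The main technical ingredient is the colon-ideal computation, and the only delicate point is separating $r \leq p-2$ from $r = p-1$: when $r = p-1$ one has $Y^{p-r-1} = 1 \notin \mathfrak{m}$, so $\overline{X^p}$ need not be a kernel element and one must instead exhibit $\overline{Y^p}$. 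Notably the injectivity of $s_2$ is never used in this argument.
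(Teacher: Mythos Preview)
Your argument is correct and follows the same overall scheme as the paper: lift $s_i$ to multiplication by $a_i\in A$, identify $\tor_1^A(\C,A/J)$ with $J/\mathfrak{m}J$, and exhibit a common nonzero kernel element (the class of $X^p$ for $r\le p-2$, of $Y^{r+1}=Y^p$ for $r=p-1$). The one genuine difference lies in the treatment of $a_2$. The paper uses the injectivity of $s_2$ to determine $a_2$ modulo $\phi^2(I)$: since $s_2$ must send the socle generator $X^{p-1}Y^r$ of $A/\phi(I)$ to a nonzero multiple of the socle generator $X^{p(r+1)-1}Y^{p-1}$ of $A/\phi^2(I)$, one finds $a_2\equiv cX^{pr}Y^{p-1-r}$ for some $c\in\C^\times$, and the verification is then immediate with this single representative. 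You instead compute the full colon ideal $\phi^2(I):\phi(I)=(X^{p(r+1)},Y^p,X^{pr}Y^{p-r-1})$ and check the vanishing generator by generator, which establishes the conclusion for \emph{every} $A$-linear $s_2$ and justifies your closing remark that injectivity is never used. The paper's route is shorter once the explicit $a_2$ is in hand; yours proves a marginally stronger statement and makes transparent why the case $r=p-1$ is special (there $Y^{p-r-1}=1\notin\mathfrak{m}$, so $X^{pr}Y^{p-r-1}\cdot X^p=X^{p(r+1)}$ does not land in $\mathfrak{m}\phi^2(I)$). The paper also records the overlap $0<r<p-1$, where both $\overline{X^p}$ and $\overline{Y^{r+1}}$ work and in fact both $\overline{s}_1,\overline{s}_2$ vanish identically.
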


\begin{proof}
Comme dans le cas non ramifié, on relève $s_1$ et $s_2$ en endomorphismes $\tilde{s}_1$ et $\tilde{s}_2$ de $A$ que l'on peut écrire sous la forme $b\mapsto a_i$, avec $a_1,a_2\in A$ tels que $X^pa_2\in (X^{p(r+1)},Y^p)$ et $Y^{r+1}a_2\in (X^{p(r+1)},Y^p)$. De plus, comme $s_2$ est injectif, il existe $c\in\C^{\times}$ tel que $a_2X^{p-1}Y^r\equiv cX^{p(r+1)-1}Y^{p-1}\,\mod \phi^2(I)$. On peut donc écrire $a_2X^{p-1}Y^r=cX^{p(r+1)-1}Y^{p-1}+P_1(X,Y)X^{p(r+1)}+P_2(X,Y)Y^p$. Ainsi $Y^r|P_1(X,Y)$ et $X^{p-1}|P_2(X,Y)$. On en déduit $a_2\equiv cX^{pr}Y^{p-1-r}\,\mod(X^{p(r+1)},Y^p)$. On peut donc choisir $\tilde{s}_2$ tel que $a_2=cX^{pr}Y^{p-1-r}$.\\
Commençons par traiter le cas $0\leq r<p-1$. Alors $X^p\in\mathfrak{m}(X^{r+1},Y)$ et $X^pa_2=cX^{p(r+1)}Y^{p-1-r}\in\mathfrak{m}(X^{p(r+1)},Y^p)$, c'est-à-dire $\overline{s}_1(X^p)=\overline{s}_2(X^p)=0$.\\
Dans le cas où $0<r\leq p-1$, on a $Y^{r+1}\in\mathfrak{m}(X^{r+1},Y)$ et $Y^{r+1}a_2=cX^{pr}Y^p\in\mathfrak{m}(X^{p(r+1)},Y^p)$, ce qui signifie $\overline{s}_1(Y^{r+1})=\overline{s}_2(Y^{r+1})=0$.\\
Dans tous les cas $\overline{s}_1$ et $\overline{s}_2$ ont un noyau commun, les deux applications sont mêmes nulles si $0<r<p-1$.
\end{proof}

\subsubsection{Non admissibilité du conoyau de $T$}

On déduit des calculs précédents la proposition suivante.

\begin{prop}\label{prop:nonadm}
Le $\C[X]$-module $\tor_2^A(\C,I_{\geq0}(\sigma)/T(I_{\geq1}(\sigma)))$ n'est pas un $\C[X]$-module de torsion, c'est donc $\C$-espace vectoriel de dimension infinie.
\end{prop}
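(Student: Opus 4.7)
Le plan consiste à exploiter la suite exacte courte de $A[X]_\phi$-modules
\begin{equation*}
0 \to I_{\geq 1}(\sigma) \xrightarrow{T} I_{\geq 0}(\sigma) \to C \to 0,
\end{equation*}
où $C = I_{\geq 0}(\sigma)/T(I_{\geq 1}(\sigma))$. L'opérateur $T$ est $A[X]_\phi$-linéaire car, étant $G$-équivariant, il commute à l'action de $\alpha$, c'est-à-dire de $X$ ; il est injectif sur $I_{\geq 1}(\sigma)$ d'après \cite[Thm.~19]{BLmodular}. La suite exacte longue pour $\tor^A_*(\C,-)$ fournit alors une surjection
\begin{equation*}
\tor_2^A(\C, C) \twoheadrightarrow \ker\bigl(\tor_1(T): \tor_1^A(\C, I_{\geq 1}(\sigma)) \to \tor_1^A(\C, I_{\geq 0}(\sigma))\bigr),
\end{equation*}
et il suffit donc de prouver que ce noyau n'est pas un $\C[X]$-module de torsion.

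La première étape consiste à identifier $\tor_1^A(\C, I_{\geq 1}(\sigma))$ comme $\C[X]$-module libre. D'après le lemme \ref{lemm:iso} et un décalage de degrés, on a un isomorphisme de $A[X]_\phi$-modules $I_{\geq 1}(\sigma) \simeq A[X]_\phi \otimes_A \phi^*\sigma$. La proposition \ref{prop:tenseur}, combinée à l'isomorphisme $\tor_i^A(\C, \phi^*\sigma) \simeq \tor_i^A(\C, \sigma)$ induit par la multiplication par $X$, fournit $\tor_1^A(\C, I_{\geq 1}(\sigma)) \simeq \C[X] \otimes_\C \tor_1^A(\C, \sigma)$. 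Les propositions \ref{prop:gennr} ou \ref{prop:genram} identifient $\sigma$ à $A/I$ où $I$ est engendré par une suite régulière de deux éléments, d'où par le complexe de Koszul $\tor_1^A(\C, \sigma) \simeq \C^2$, et donc $\tor_1^A(\C, I_{\geq 1}(\sigma)) \simeq \C[X]^2$ est un $\C[X]$-module libre, donc sans torsion.

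L'étape centrale consiste à exhiber un élément non nul dans $\ker(\tor_1(T))$. On considère pour cela les restrictions de $T = T_+ + T_-$ à $R_1(\sigma) \simeq A/\phi(I)$ : les opérateurs $T_+: A/\phi(I) \to A/\phi^2(I)$ et $T_-: A/\phi(I) \to A/I$ sont $A$-linéaires, avec $T_+$ injectif d'après le lemme \ref{lemm:surjinj}. Les propositions \ref{prop:calnr} et \ref{prop:calram} fournissent alors un élément non nul $\xi \in \tor_1^A(\C, R_1(\sigma))$ appartenant simultanément à $\ker(\tor_1(T_+))$ et $\ker(\tor_1(T_-))$. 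Vu dans $\tor_1^A(\C, I_{\geq 1}(\sigma))$, on obtient $\tor_1(T)(\xi) = \tor_1(T_+)(\xi) + \tor_1(T_-)(\xi) = 0$ dans $\tor_1^A(\C, R_0(\sigma)) \oplus \tor_1^A(\C, R_2(\sigma))$. Le principal obstacle réside précisément dans le contenu technique des propositions \ref{prop:calnr} et \ref{prop:calram}, qui utilisent de façon cruciale l'hypothèse $[\b:\mathbb{Q}_p]=2$ et la forme explicite de l'idéal $I$.

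Pour conclure, $\ker(\tor_1(T))$ est un sous-module du $\C[X]$-module sans torsion $\tor_1^A(\C, I_{\geq 1}(\sigma))$ contenant l'élément non nul $\xi$, donc il n'est pas de $\C[X]$-torsion. La surjection précédente montre alors que $\tor_2^A(\C, C)$ n'est pas non plus de $\C[X]$-torsion. Enfin, par cohérence de $A[X]_\phi$, $C$ est un $A[X]_\phi$-module de présentation finie (quotient du module de présentation finie $I_{\geq 0}(\sigma)$ par un sous-module de type fini), donc \cite[Prop.~2.2]{EmCoh} montre que $\tor_2^A(\C, C)$ est un $\C[X]$-module de type fini ; n'étant pas de torsion, il est nécessairement de dimension infinie sur $\C$.
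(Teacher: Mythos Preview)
Your proof is correct and follows essentially the same approach as the paper's: both use the long exact sequence in $\tor^A_*(\C,-)$ associated to $0 \to I_{\geq 1}(\sigma) \xrightarrow{T} I_{\geq 0}(\sigma) \to C \to 0$, identify $\tor_1^A(\C, I_{\geq 1}(\sigma))$ as a free $\C[X]$-module via the proposition \ref{prop:tenseur}, and then appeal to the propositions \ref{prop:calnr}/\ref{prop:calram} (with $s_1 = T_-$, $s_2 = T_+$) to exhibit a nonzero element of $\ker(\tor_1(T))$ in degree~$1$. Your final paragraph invoking finite presentation is superfluous --- any $\C[X]$-module that is not torsion contains a copy of $\C[X]$ and is therefore automatically infinite-dimensional over $\C$ --- but it is not incorrect.
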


\begin{proof}
La suite exacte longue associée à la suite exacte $0\rightarrow I_{\geq1}(\sigma)\rightarrow I_{\geq0}(\sigma)\rightarrow I_{\geq0}(\sigma)/T(I_{\geq1}(\sigma))\rightarrow0$ s'écrit
\begin{multline*}
0\rightarrow \tor_2^A(\C,I_{\geq1}(\sigma))\rightarrow\tor_2^A(\C,I_{\geq0}(\sigma))\rightarrow\tor_2^A(\C,I_{\geq0}(\sigma)/T(I_{\geq1}(\sigma)))\\\rightarrow\tor_1^A(\C,I_{\geq1}(\sigma))\rightarrow\tor_1^A(\C,I_{\geq0}(\sigma))
\end{multline*}
Comme le $\C[X]$-module $\tor_1^A(\C,I_{\geq1}(\sigma))$ est isomorphe à $\C[X]\otimes_\C\tor_1^A(\C,\phi^*\sigma)$, c'est un $\C[X]$-module libre de type fini, il suffit donc de prouver que la flèche de droite n'est pas injective, et pour cela il suffit de prouver que la restriction de la flèche de droite à $\tor_1^A(\C,\phi^*\sigma)$ n'est pas injective, c'est alors une conséquence des calculs précédents, puisque d'après \cite[Prop. $4$]{BLmodular} et les propositions \ref{prop:gennr} et \ref{prop:genram}, le $A$-module $\sigma$ est toujours de la forme considérée dans les proposition \ref{prop:calnr} et \ref{prop:calram}.
\end{proof}

Nous en déduisons le résultat suivant.

\begin{coro}
Soit $\sigma$ une représentation irréductible lisse de $\K$. La représentation $\pi(\sigma,0)$ n'est pas admissible.
\end{coro}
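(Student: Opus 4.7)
The plan is to suppose $\pi(\sigma,0)$ admissible---so that $\tor_2^A(\C,\pi(\sigma,0)) \simeq \pi(\sigma,0)^U$ is finite-dimensional over $\C$---and derive a contradiction from the computations underlying Proposition~\ref{prop:nonadm}. The starting point is the short exact sequence
\begin{equation*}
0 \to \ind_\K^G(\sigma) \xrightarrow{T} \ind_\K^G(\sigma) \to \pi(\sigma,0) \to 0,
\end{equation*}
which is exact by injectivity of $T$ (\cite[Thm.~19]{BLmodular}). Applying $\tor_*^A(\C,-)$, whose long exact sequence terminates in degree $2$ since $d=2$, would yield a surjection
\begin{equation*}
\tor_2^A(\C,\pi(\sigma,0)) \twoheadrightarrow \ker\!\bigl(\tor_1^A(T) : \tor_1^A(\C,\ind_\K^G(\sigma)) \to \tor_1^A(\C,\ind_\K^G(\sigma))\bigr);
\end{equation*}
the task then reduces to showing this right-hand kernel is infinite-dimensional over $\C$.

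Next I would reuse the nonzero kernel element produced inside the proof of Proposition~\ref{prop:nonadm}. Applied to $s_1 = T_-|_{R_1(\sigma)}$ and $s_2 = T_+|_{R_1(\sigma)}$---the latter being injective by Lemma~\ref{lemm:surjinj}---Propositions~\ref{prop:calnr} and~\ref{prop:calram} supply a nonzero element $\eta \in \tor_1^A(\C,R_1(\sigma))$ with $\tor_1^A(T_-)(\eta) = 0$ in $\tor_1^A(\C,R_0(\sigma))$ and $\tor_1^A(T_+)(\eta) = 0$ in $\tor_1^A(\C,R_2(\sigma))$. Since $U \subset I$ stabilises each double coset $I\alpha^m\K$, we have the $A$-module decomposition $\ind_\K^G(\sigma) = \bigoplus_{m \in \mathbb{Z}} R_m(\sigma)$ and a corresponding direct-sum decomposition $\tor_1^A(\C,\ind_\K^G(\sigma)) = \bigoplus_m \tor_1^A(\C,R_m(\sigma))$. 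Viewed inside this direct sum, the two vanishings above say exactly that $\tor_1^A(T)(\eta) = 0$ in the full $\tor_1^A(\C,\ind_\K^G(\sigma))$.

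To build an infinite family, I would propagate $\eta$ under the action of $\alpha$. Since $\alpha$ acts invertibly on $\ind_\K^G(\sigma)$, the induced $\C$-linear endomorphism of $\tor_1^A(\C,\ind_\K^G(\sigma))$ is injective; and since $\alpha R_m \subset R_{m+1}$ for every $m \geq 0$, the translates $\alpha^n \eta$ ($n \geq 0$) are nonzero elements lying in pairwise distinct summands $\tor_1^A(\C,R_{n+1}(\sigma))$, hence $\C$-linearly independent. The $G$-equivariance of $T$ forces $\alpha$ and $\tor_1^A(T)$ to commute on $\tor_1^A(\C,\ind_\K^G(\sigma))$, so each $\alpha^n \eta$ remains in $\ker(\tor_1^A(T))$, providing the required infinite-dimensional subspace of $\ker(\tor_1^A(T))$ and, via the surjection of the first paragraph, of $\pi(\sigma,0)^U$---the desired contradiction. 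The main obstacle is the production of the single element $\eta$, which is precisely what the explicit torsion computations of Propositions~\ref{prop:calnr} and~\ref{prop:calram} provide; once $\eta$ is in hand, the propagation step via $\alpha$ is entirely formal.
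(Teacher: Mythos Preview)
Your argument is correct and follows a different route from the paper's. The paper deduces the corollary in one line from Proposition~\ref{prop:nonadm} together with \cite[Prop.~4.5]{EmCoh}: the former shows that $\tor_2^A(\C, I_{\geq 0}(\sigma)/T(I_{\geq 1}(\sigma)))$ is infinite-dimensional, and the latter transfers this to non-admissibility of $\pi(\sigma,0)$. You instead run the long exact sequence for the full defining short exact sequence of $\pi(\sigma,0)$ and show $\ker(\tor_1^A(T))$ is infinite-dimensional by producing a single element $\eta$ from Propositions~\ref{prop:calnr} and~\ref{prop:calram} and propagating it under $\alpha$. Both approaches rest on the same computational input; yours avoids the external reference at the cost of the propagation step, and makes the link to $\pi(\sigma,0)^U$ more explicit.

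One point deserves a different justification. You assert that the action of $\alpha$ on $\tor_1^A(\C,\ind_\K^G(\sigma))$ is injective ``since $\alpha$ acts invertibly on $\ind_\K^G(\sigma)$''. But the induced $X$-action on $\tor_i^A$ is defined via the $A$-linear map $\phi^*M\to M$ associated to the $\phi$-semilinear endomorphism $\alpha$; invertibility of $\alpha$ as a $\C$-linear map does not directly furnish an inverse to this $A$-linear map (indeed $\alpha^{-1}$ is not $\phi'$-semilinear for any endomorphism $\phi'$ of $A$, since $\alpha^{-1}U\alpha\not\subset U$). The fix is already implicit in your setup: $\eta$ and all its $\alpha$-translates lie in the direct summand $\tor_1^A(\C, I_{\geq 0}(\sigma))\subset\tor_1^A(\C,\ind_\K^G(\sigma))$, which by Lemma~\ref{lemm:iso} and Example~\ref{exem:calcul} is a free $\C[X]$-module, so $X$ acts injectively there. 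With that correction the argument is complete.
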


\begin{proof}
C'est une conséquence de la proposition \ref{prop:nonadm} et de \cite[Prop. $4.5$]{EmCoh}.
\end{proof}

\begin{rema}
Ce dernier résultat est déjà bien connu. Si $F$ est une extension non ramifiée de $\mathbb{Q}_p$ et $p>2$, ce résultat est mentionné dans \cite[Rem. $4.2.6$]{Brmodp} et prouvé en détail dans \cite{MorraIwahori} pour $p\geq5$. Le cas totalement ramifié est traité dans \cite{Scheinsupersing} pour $p$ impair.
\end{rema}

\subsection{Preuve du théorème \ref{theo:principal}}

Supposons désormais $[\b:\mathbb{Q}_p]=2$. Soit $\sigma$ une représentation lisse irréductible de $\K$.\\

Soit $\phi_2$ l'endomorphisme de $A$ induit par $\alpha^2$. C'est un endomorphisme plat de $A$, égal au carré de $\phi$. Soient $I^p(\sigma)=\bigoplus_{m\in\mathbb{Z}}R_{2k}(\sigma)\subset\ind_\K^G(\sigma)$ et $I^{imp}(\sigma)=\bigoplus_{m\in\mathbb{Z}}R_{2k+1}(\sigma)$. On pose, pour $i\geq0$, $I^p_{\geq i}(\sigma)=I^p(\sigma)\cap I_{\geq i}(\sigma)$ et $I^{imp}_{\geq i}(\sigma)=I_{\geq i}(\sigma)\cap I^{imp}(\sigma)$. On a alors $T(I_{\geq1}^{imp}(\sigma))\subset I_{\geq0}^p(\sigma)$ et $T(I_{\geq2}^p(\sigma))\subset I_{\geq1}^{imp}(\sigma)$, de sorte que $I_{\geq0}^p(\sigma)/T(I_{\geq1}^{imp}(\sigma))$ est un sous-$A[X]_{\phi_2}$-module de $I_{\geq0}(\sigma)/T(I_{\geq1}(\sigma))$ et même facteur direct en tant que $A$-module. Posons dans la suite $L(\sigma)=I_{\geq0}^p(\sigma)/T(I_{\geq1}^{imp}(\sigma))$. Comme précédemment, on a un isomorphisme de $A[X]_{\phi_2}$-modules $A[X]_{\phi_2}\otimes_A\sigma \simeq I_{\geq0}^p(\sigma)$.\\
Remarquons que la preuve de la proposition \ref{prop:nonadm} s'adapte \emph{mutatis mutandis} au $A[X]_{\phi_2}$-module $L(\sigma)$ pour prouver que $\tor_2^A(\C,L(\sigma))$ n'est pas de $\C[X]$-torsion.

\begin{prop}
On a $\tor_0^A(\C,L(\sigma))=0$. La $\C[X]$-torsion de $\tor_2^A(\C,L(\sigma))$ est isomorphe à $\C[X]/(X)$ et coïncide avec l'image de $\tor_2^A(\C,\sigma)$ obtenue à partir du morphisme $\sigma\hookrightarrow I_{\geq0}^p(\sigma)\twoheadrightarrow L(\sigma)$, la première flèche étant $v\mapsto [1,v]$.
\end{prop}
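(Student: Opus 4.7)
Le plan est d'écrire la suite exacte courte
\begin{equation*}
0 \to I^{imp}_{\geq 1}(\sigma) \xrightarrow{T} I^p_{\geq 0}(\sigma) \to L(\sigma) \to 0,
\end{equation*}
où l'injectivité de $T$ vient de \cite[Thm. $19$]{BLmodular}, et d'en tirer la suite exacte longue des $\tor_i^A(\C,-)$ vus comme $\C[X]$-modules via la structure $\phi_2$. L'analogue pour $\phi_2$ du lemme \ref{lemm:iso} identifie $I^p_{\geq 0}(\sigma)\simeq A[X]_{\phi_2}\otimes_A\sigma$ et $I^{imp}_{\geq 1}(\sigma)\simeq A[X]_{\phi_2}\otimes_A R_1(\sigma)$, et l'exemple \ref{exem:calcul} assurerait alors que les $\C[X]$-modules $\tor_i^A(\C,I^p_{\geq 0}(\sigma))$ et $\tor_i^A(\C,I^{imp}_{\geq 1}(\sigma))$ sont libres, le degré $k$ en $X$ correspondant au facteur direct $R_{2k}(\sigma)$ (resp.\ $R_{2k+1}(\sigma)$).

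L'étape décisive consiste à utiliser la décomposition $T=T_++T_-$ du lemme \ref{lemm:surjinj}. Dans la graduation ci-dessus, $T_-$ préserve le degré (il envoie $R_{2k+1}$ sur $R_{2k}$) tandis que $T_+$ élève le degré de $1$ (il envoie $R_{2k+1}$ sur $R_{2k+2}$). L'analogue du lemme \ref{lemm:surjinj} fournit alors $\tor_0^A(T_+)=0$ et $\tor_0^A(T_-)$ isomorphisme en chaque degré; ainsi $\tor_0^A(T)$ est un isomorphisme $\C[X]$-linéaire et la suite exacte longue donne $\tor_0^A(\C,L(\sigma))=0$.

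Pour la seconde assertion, le calcul analogue en degré $2$ donne $\tor_2^A(T_-)=0$ et $\tor_2^A(T_+)$ isomorphisme décalant le degré de $1$. En choisissant des générateurs $a$ de $\tor_2^A(\C,\sigma)\simeq \sigma^U$ et $b$ de $\tor_2^A(\C,\phi^*\sigma)$, la flèche $\tor_2^A(T)$ envoie $\C[X]\cdot b$ isomorphiquement sur $X\cdot\C[X]\cdot a$, et son conoyau est donc isomorphe à $\C[X]/(X)$. Comme $\tor_2^A(\C,-)$ est exact à gauche sur les modules localement artiniens, $\tor_2^A(T)$ est injectif et la suite exacte longue devient
\begin{equation*}
0\to \C[X]/(X)\to\tor_2^A(\C,L(\sigma))\to \ker\tor_1^A(T)\to 0.
\end{equation*}
Comme $\tor_1^A(\C,I^{imp}_{\geq 1}(\sigma))$ est libre sur le principal $\C[X]$, $\ker\tor_1^A(T)$ est sans torsion, d'où la $\C[X]$-torsion de $\tor_2^A(\C,L(\sigma))$ est exactement le sous-module $\C[X]/(X)$.

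Enfin, la composée $\sigma\hookrightarrow I^p_{\geq 0}(\sigma)\twoheadrightarrow L(\sigma)$ identifie $\sigma$ à la composante de degré $0$ dans $A[X]_{\phi_2}\otimes_A\sigma$, de sorte que le générateur $a\in\tor_2^A(\C,\sigma)$ s'envoie sur $a\in\C[X]\cdot a$, dont l'image dans le conoyau engendre $\C[X]/(X)$. Le principal obstacle technique me paraît être le suivi précis de l'action des opérateurs $T_\pm$ sur la graduation $\phi_2$, qui reste cependant une adaptation routinière du lemme \ref{lemm:surjinj} à partir des formules explicites de la proposition $5$ de \cite{BLmodular}.
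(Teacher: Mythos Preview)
Your proposal is correct and follows essentially the same approach as the paper's proof: both use the long exact sequence associated to $0\to I^{imp}_{\geq1}(\sigma)\xrightarrow{T}I^p_{\geq0}(\sigma)\to L(\sigma)\to 0$, identify the source and target with $A[X]_{\phi_2}\otimes_A R_1(\sigma)$ and $A[X]_{\phi_2}\otimes_A\sigma$, and then apply the lemme~\ref{lemm:surjinj} to compute $\tor_0^A(T)$ and $\tor_2^A(T)$ degree-by-degree via the decomposition $T=T_++T_-$. Your observation that $\tor_0^A(T)$ is in fact an isomorphism (rather than merely a surjection as the paper states) is correct and harmless, and your treatment of the $\tor_2$-torsion via the freeness of $\tor_1^A(\C,I^{imp}_{\geq1}(\sigma))$ over $\C[X]$ is exactly the paper's argument.
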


\begin{proof}
Le lemme \ref{lemm:surjinj} montre que $\tor_0^A(T)=\tor_0^A(T_-)$ et que $\tor_0^A(\C,\sigma)\subset\tor_0^A(\C,I_{\geq0}^p(\sigma))$ est dans l'image de $\tor_0^A(T_-)$. Comme $\tor_0^A(\C,\sigma)$ engendre le $\C[X]$-module $\tor_0^A(\C,I_{\geq0}^p(\sigma))$, l'application $\tor_0^A(T)$ est une surjection de $\tor_0^A(\C,I_{\geq1}^{imp}(\sigma))$ sur $\tor_0^A(\C,I_{\geq0}^p(\sigma))$, ce qui prouve que $\tor_0^A(\C,L(\sigma))=0$.\\
De même, le lemme \ref{lemm:surjinj} montre que $\tor_2^A(T)=\tor_2^A(T_+)$. De plus $\tor_2^A(\C,R_2(\sigma))$ est l'image de $\tor_2^A(\C,R_1(\sigma))$ par $\tor_2^A(T_+)$.  Par ailleurs, $\tor_2^A(\C,I_{\geq0}^p(\sigma))$ est isomorphe à $\C[X]\otimes_\C\tor_2^A(\C,\sigma)$, et l'image de $\tor_2^A(\C,I_{\geq2}^p(\sigma))$ dans cet espace est exactement $X\C[X]\otimes_\C\tor_2^A(\C,\sigma)$ et est engendrée, comme $\C[X]$-module, par $\tor_2^A(\C,R_2(\sigma))$. Comme $T_+(I_{\geq1}^{imp}(\sigma))\subset I_{\geq2}^p(\sigma)$ on voit au final que l'image de $\tor_2^A(\C,I_{\geq1}^{imp}(\sigma))\rightarrow\tor_2^A(\C,I_{\geq0}^p(\sigma))$ est exactement $X\C[X]\otimes_\C\tor_2^A(\C,\sigma)$. On a alors une suite exacte longue
\begin{multline*}
0\rightarrow\tor_2^A(\C,I_{\geq1}^{imp}(\sigma))\rightarrow\tor_2^A(\C,I_{\geq0}^p(\sigma))\rightarrow\tor_2^A(\C,L(\sigma))\\\rightarrow\tor_1^A(\C,I_{\geq1}^{imp}(\sigma))\rightarrow\tor_1^A(\C,I_{\geq0}^p(\sigma))
\end{multline*}
et on conclut en remarquant que $\tor_1^A(\C,I_{\geq1}^{imp}(\sigma))\simeq\C[X]\otimes_\C\tor_1^A(\C,R_1(\sigma))$ est un $\C[X]$-module libre de type fini, donc le noyau de la flèche de droite est libre de type fini. Il est maintenant clair que $\tor_2^A(\C,L(\sigma))_{tors}\simeq\C[X]/(X)$ et que cette torsion est l'image de $\tor_2^A(\C,\sigma)\subset\tor_2^A(\C,I_{\geq0}^p(\sigma))$.
\end{proof}

\begin{theo}\label{theo:nonpf}
Soit $\pi$ une représentation lisse irréductible de $G$ ayant un caractère central. Si $\pi$ est supersingulière, alors elle n'est pas de présentation finie.
\end{theo}

\begin{proof}
Soit $\pi$ une représentation lisse irréductible supersingulière et ayant un caractère central. Par définition, il existe une représentation irréductible $\sigma$ de $\K$ et une surjection $G$-équivariante $\ind_\K^G(\sigma)/T\twoheadrightarrow\pi$. Comme $\sigma$ engendre la représentation $\ind_\K^G(\sigma)$, l'application composée $\sigma\hookrightarrow\ind_\K^G(\sigma)\twoheadrightarrow\pi$ est injective. Soit $I^+(\pi,\sigma)$ le sous-$A[X]_\phi$-module de $\pi$ engendré par $\sigma$, et $M(\pi,\sigma)$ le sous-$A[X]_{\phi_2}$-module engendré par $\sigma$. On a ainsi une surjection $L(\sigma)\twoheadrightarrow M(\pi,\sigma)$. Notons $N(\pi,\sigma)$ son noyau. Par ailleurs, $M(\pi,\sigma)$ est stable sous l'action du sous-groupe $H$, c'est donc un $\C[[\widetilde{H}]][X]_{\phi_2}$-module.\\
Comme la composée $\sigma\rightarrow L(\sigma)\rightarrow M(\pi,\sigma)$ est injective, il en est de même de l'application $\tor_2^A(\C,\sigma)\rightarrow\tor_2^A(\C,M(\pi,\sigma))$. Autrement dit, $\tor_2^A(\C,N(\pi,\sigma))\cap \tor_2^A(\C,L(\sigma))_{tors}=0$. Ainsi $\tor_2^A(\C,N(\pi,\sigma))$ est un $\C[X]$-module sans torsion. Comme on a par ailleurs $h_0(L(\sigma))=0$ et que $L(\sigma)$ n'est pas admissible puisque $\tor_2^A(\C,L(\sigma))$ n'est pas de torsion, le corollaire \ref{coro:nonadm} implique que $M(\pi, \sigma)$ n'est pas admissible en tant que $A$-module. Or $M(\pi,\sigma)\subset I^+(\pi,\sigma)$, donc d'après \cite[Thm. $1.3$]{HuDiag}, $\pi$ n'est pas de présentation finie.
\end{proof}

\def\cprime{$'$} \def\cprime{$'$} \def\cprime{$'$} \def\cprime{$'$}
\providecommand{\bysame}{\leavevmode ---\ }
\providecommand{\og}{``}
\providecommand{\fg}{''}
\providecommand{\smfandname}{\&}
\providecommand{\smfedsname}{\'eds.}
\providecommand{\smfedname}{\'ed.}
\providecommand{\smfmastersthesisname}{M\'emoire}
\providecommand{\smfphdthesisname}{Th\`ese}

\end{document}